\documentclass[11pt]{article}
\usepackage{fullpage} 

\usepackage{ucs}
\usepackage{amssymb}
\usepackage{amsthm}
\usepackage{amsmath}
\usepackage{latexsym}
\usepackage[cp1251]{inputenc}
\usepackage[english]{babel}
\usepackage{graphicx}
\usepackage{wrapfig}
\usepackage{caption}
\usepackage{subcaption}
\usepackage{txfonts}
\usepackage{lmodern}
\usepackage{mathrsfs}
\usepackage{algorithm}
\usepackage{dsfont}
\usepackage{enumerate}
\usepackage[hidelinks]{hyperref}
\usepackage{multicol}
\usepackage{csquotes}
\usepackage{stmaryrd}
\usepackage{tikz}
\usepackage{comment}
\usepackage{enumitem}

\usepackage{cite}

\newcommand{\vc}[1]{\ensuremath{\vcenter{\hbox{#1}}}}

%\usepackage[left=3cm,right=3cm,top=2cm,bottom=2.5cm,bindingoffset=0cm]{geometry}

% ------ Tikz style for a vertex
\tikzset{vtx/.style={inner sep=1.7pt, outer sep=0pt, circle, fill,draw}}

\tikzset{vtxB/.style={inner sep=1.7pt, outer sep=0pt, circle, fill=white,draw}}

% Style for our drawing
%\newcommand{\vc}[1]{\ensuremath{\vcenter{\hbox{#1}}}}
\tikzset{vtx/.style={inner sep=1.7pt, outer sep=0pt, circle, fill}} 
\tikzset{unlabeled_vertex/.style={inner sep=1.7pt, outer sep=0pt, circle, fill}} 
\tikzset{labeled_vertex/.style={inner sep=2.2pt, outer sep=0pt, rectangle, fill=yellow, draw=black}} 
\tikzset{edge_color0/.style={color=black,line width=1.2pt}} 
\tikzset{edge_color1/.style={color=red,  line width=1.2pt,opacity=0}} 
\tikzset{edge_color2/.style={color=blue, line width=1.2pt,opacity=1}} 
\tikzset{edge_color2d/.style={color=blue, line width=1.2pt,opacity=1,dashed}} 
\tikzset{edge_color3/.style={color=blue, line width=1.2pt,opacity=1,dashed}} 
%\tikzset{edge_color3/.style={color=green,line width=1.2pt}} 
\tikzset{edge_color4/.style={color=red,  line width=1.2pt,dotted}} 
\tikzset{edge_color5/.style={color=blue, line width=1.2pt,dotted}} 
\tikzset{edge_color6/.style={color=green, line width=1.2pt,dotted}} 
\tikzset{vertex_color1/.style={inner sep=1.7pt, outer sep=0pt, draw, circle, fill=red}} 
\tikzset{vertex_color2/.style={inner sep=1.7pt, outer sep=0pt, draw, circle, fill=blue}} 
\tikzset{vertex_color3/.style={inner sep=1.7pt, outer sep=0pt, draw, circle, fill=green}} 
\def\outercycle#1#2{ \draw \foreach \x in {0,1,...,#1}{(270-45+\x*360/#2:0.8) coordinate(x\x)};}
\def\outercycleX#1#2#3{ \draw \foreach \x in {0,1,...,#1}{(#3+\x*360/#2:0.8) coordinate(x\x)};}

\tikzset{vertex_u/.style={inner sep=1.7pt, outer sep=0pt, circle, fill}} 
\tikzset{vertex_l/.style={inner sep=2.2pt, outer sep=0pt, rectangle,fill=yellow, draw=black}}

%%%%%%%%%%%%%%%%%%%%%%%%%%%%%% This is really good for drawing

\newcommand{\Fuu}[1]{
\vc{\begin{tikzpicture}[scale=0.6]\outercycle{3}{2}
\draw[edge_color#1] (x0)--(x1);
\draw (x0) node[unlabeled_vertex]{};\draw (x1) node[unlabeled_vertex]{};
\end{tikzpicture}}}

\newcommand{\Flu}[1]{
\vc{\begin{tikzpicture}[scale=0.6]\outercycle{3}{2}
\draw[edge_color#1] (x0)--(x1);
\draw (x0) node[labeled_vertex]{} node{\scalebox{0.5}{\tiny 1}};\draw (x1) node[unlabeled_vertex]{};
\end{tikzpicture}}}

\newcommand{\Fuuu}[3]{
\vc{\begin{tikzpicture}[scale=0.6]\outercycleX{4}{3}{270}
\draw[edge_color#1] (x0)--(x1);\draw[edge_color#2] (x0)--(x2);  \draw[edge_color#3] (x1)--(x2);    
\draw (x0) node[unlabeled_vertex]{};\draw (x1) node[unlabeled_vertex]{};\draw (x2) node[unlabeled_vertex]{};
\end{tikzpicture}}}

\newcommand{\Fluu}[3]{
\vc{\begin{tikzpicture}[scale=0.6]\outercycleX{4}{3}{270}
\draw[edge_color#1] (x0)--(x1);\draw[edge_color#2] (x0)--(x2);  \draw[edge_color#3] (x1)--(x2);    
\draw (x0) node[labeled_vertex]{} node{\scalebox{0.5}{\tiny 1}}; \draw (x1) node[unlabeled_vertex]{};\draw (x2) node[unlabeled_vertex]{};
\end{tikzpicture}}}

\newcommand{\Fllu}[3]{
\vc{\begin{tikzpicture}[scale=0.6]\outercycleX{4}{3}{210}
\draw[edge_color#1] (x0)--(x1);\draw[edge_color#2] (x0)--(x2);  \draw[edge_color#3] (x1)--(x2);    
\draw (x0) node[labeled_vertex]{} node{\scalebox{0.5}{\tiny 1}}; 
\draw (x1) node[labeled_vertex]{}  node{\scalebox{0.5}{\tiny 2}};
\draw (x2) node[unlabeled_vertex]{};
\end{tikzpicture}}}

\newcommand{\FfourEdges}[6]{
\draw[edge_color#1] (x0)--(x1);\draw[edge_color#2] (x0)--(x2);\draw[edge_color#3] (x0)--(x3);  \draw[edge_color#4] (x1)--(x2);\draw[edge_color#5] (x1)--(x3);  \draw[edge_color#6] (x2)--(x3);
}
\newcommand{\Ffour}[5]{
\vc{\begin{tikzpicture}[scale=0.7]\outercycle{5}{4}
\FfourEdges#5
\draw (x0) node[vertex_#1]{};\draw (x1) node[vertex_#2]{};\draw (x2) node[vertex_#3]{};\draw (x3) node[vertex_#4]{};
\end{tikzpicture}}
}

\newcommand{\Flluu}[6]{
\vc{\begin{tikzpicture}[scale=0.7]\outercycle{5}{4}
\FfourEdges{#1}{#2}{#3}{#4}{#5}{#6}
\draw (x0) node[vertex_l]{} node{\scalebox{0.5}{\tiny 1}}; 
\draw (x1) node[vertex_l]{} node{\scalebox{0.5}{\tiny 2}}; 
\draw (x2) node[vertex_u]{};
\draw (x3) node[vertex_u]{};
\end{tikzpicture}}
}

\newcommand{\Flllu}[6]{
\vc{\begin{tikzpicture}[scale=0.7]\outercycleX{5}{4}{180}
\FfourEdges{#1}{#2}{#3}{#4}{#5}{#6}
\draw (x0) node[vertex_l]{} node{\scalebox{0.5}{\tiny 1}}; 
\draw (x1) node[vertex_l]{} node{\scalebox{0.5}{\tiny 2}}; 
\draw (x2) node[vertex_l]{} node{\scalebox{0.5}{\tiny 3}}; 
\draw (x3) node[vertex_u]{};
\end{tikzpicture}}
}

\newcommand{\Fuuuu}[6]{\Ffour{u}{u}{u}{u}{#1#2#3#4#5#6}}

\newcommand{\Fllluu}[7]{
\vc{\begin{tikzpicture}[scale=0.7]\outercycleX{6}{5}{270-72}
\draw[edge_color2] (x0)--(x1);
\draw[edge_color1] (x0)--(x2);
\draw[edge_color#1] (x0)--(x3);  
\draw[edge_color#2] (x0)--(x4);  
\draw[edge_color1] (x1)--(x2);
\draw[edge_color#3] (x1)--(x3);  
\draw[edge_color#4] (x1)--(x4);  
\draw[edge_color#5] (x2)--(x3);
\draw[edge_color#6] (x2)--(x4);
\draw[edge_color#7] (x3)--(x4);

\draw (x0) node[vertex_l]{} node{\scalebox{0.5}{\tiny 1}}; 
\draw (x1) node[vertex_l]{} node{\scalebox{0.5}{\tiny 2}}; 
\draw (x2) node[vertex_l]{} node{\scalebox{0.5}{\tiny 3}};
\draw (x3) node[vertex_u]{};
\draw (x4) node[vertex_u]{};
\end{tikzpicture}}
}

\newtheorem{theo}{Theorem}
\newtheorem{prop}[theo]{Proposition}
\newtheorem{lemma}[theo]{Lemma}
\newtheorem{ques}[theo]{Question}
\newtheorem{corl}[theo]{Corollary}
\newtheorem{conj}[theo]{Conjecture}

\theoremstyle{definition}

\numberwithin{theo}{section}

\newcommand{\FC}[1]{\textcolor{blue}{#1}}

\newcommand{\oururl}{\url{http://lidicky.name/pub/10problems}}
%Add actuall webpage

\author{%
J\'ozsef Balogh\footnote{Department of Mathematics, University of Illinois at Urbana-Champaign, Urbana, Illinois 61801, USA, E-mail: \texttt{jobal@illinois.edu}. Research is partially supported by NSF Grant DMS-1764123, NSF RTG grant DMS 1937241, Arnold O. Beckman Research Award (UIUC Campus Research Board RB 22000), the Langan Scholar Fund (UIUC), and the Simons Fellowship.} 
\and Felix Christian Clemen \footnote {Department of Mathematics, University of Illinois at Urbana-Champaign, Urbana, Illinois 61801, USA, E-mail: \texttt{fclemen2@illinois.edu}.}
 \and Bernard Lidick\'{y} \footnote {Iowa State University, Department of Mathematics, Iowa State University, Ames, IA., E-mail: \texttt{ lidicky@} \texttt{iastate.edu}. Research of this author is partially supported by NSF grant DMS-1855653 and Scott Hanna fellowship.}
}

\title{10 Problems for Partitions of Triangle-free Graphs}
\begin{document}
\maketitle 
\begin{abstract}
    We will state 10 problems, and solve some of them, for partitions in triangle-free graphs related to Erd\H{o}s' Sparse Half Conjecture.
    
    Among others we prove the following variant of it: For every sufficiently large even integer $n$ the following holds. Every triangle-free graph on $n$ vertices has a partition $V(G)=A\cup B$ with $|A|=|B|=n/2$ such that $e(G[A])+e(G[B])\leq n^2/16$. This result is sharp since the complete bipartite graph with class sizes $3n/4$ and $n/4$ achieves equality, when $n$ is a multiple of 4.
    
    Additionally, we discuss similar problems for $K_4$-free graphs. 
\end{abstract}
\section{Introduction}
Erd\H{o}s' \emph{Sparse Half Conjecture}~\cite{MR0409246,MR1439273} states that every triangle-free graph on $n$ vertices has a subset of vertices of size $n/2$ spanning at most $n^2/50$ edges. He offered a \$250 reward for its solution. There has been a lot of work on this conjecture~\cite{MR1273598,MR2232396,sparsehalfraz,MR3383248}, with the most recent progress by Razborov~\cite{sparsehalfraz}, who proved that every triangle-free graph on $n$ vertices has a subset of vertices of size $n/2$ spanning at most $(27/1024)n^2$ edges.  

Another related conjecture of Erd\H{o}s~\cite{MR0409246} on triangle-free graphs states that every triangle-free graph on $n$ vertices can be made bipartite by removing at most $n^2/25$ edges. There also has been work on this conjecture~\cite{Howtotriangle,Howmany,flagbipartite}, with the most recent progress by the authors of this paper who proved that every triangle-free graph on $n$ vertices can be made bipartite by removing at most $n^2/23.5$ edges.    

In this paper we will state various related questions and conjectures on partition problems for triangle-free graphs. A vertex \emph{$k$-partition} of a graph $G$ is a partition $V(G)=A_1\cup\ldots \cup A_k$ of its vertex set into $k$ classes. A $k$-partition is \emph{balanced} if $|A_i|=|A_j|$ for all $i,j\in[k]$. Given a vertex-subset $A$, we denote by $e(G[A])$ the number of edges in $G$ with both endpoints from $A$. If $G$ is clear from context, we also write $e(A)$. Given two disjoint vertex-subsets $A,B$ we denote by $e(A,B)$ the number of edges with one endpoint from $A$ and the other from $B$.
%Let $D_k^b(G)$ denote the minimum of $\sum_{i=1}^k e(G[A_i])$ over all balanced $k$-partitions $V(G)=A_1\cup\ldots \cup A_k$. 
% XXXX The definition below is the most important so lets make is easy to find.
Let 
\[
D_k^b(G) :=  \min_{A_1,\ldots,A_k} \sum_{i=1}^k e(G[A_i]),
\]
where the minimum is taken over all balanced $k$-partitions $V(G)=A_1\cup\ldots \cup A_k$. 
The edges with both endpoints from the same set $A_i$ are called \emph{class-edges}.

Our first result can be interpreted as a combined variant of both of Erd\H{o}s' conjectures. 
\begin{theo}
\label{theo:balancedcut}
There exists $n_0$ such that for every even integer $n\geq n_0$ the following holds. Let $G$ be a triangle-free graph $G$ on $n$ vertices. Then
\begin{align*}
    D_2^b(G)\leq \frac{n^2}{16}.
\end{align*}
Additionally, if $D_2^b(G)=\frac{n^2}{16}$, then $G\cong K_{\frac{3n}{4},\frac{n}{4}}$. 
\end{theo}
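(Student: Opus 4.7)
My plan is to split the argument by edge density and bipartiteness.

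\emph{Bipartite case.} If $G$ is bipartite with parts $X,Y$ and $|X|=p\ge n/2$, I place all of $Y$ together with any $p-n/2$ vertices of $X$ into $A$, and the remaining $n/2$ vertices of $X$ into $B$. Then $e(B)=0$ and $e(A)\le(p-n/2)(n-p)$, a concave function of $p$ maximized at $p=3n/4$ with value $n^2/16$; equality forces $G\cong K_{3n/4,n/4}$. This settles the bipartite case and pins down the extremal graph.

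\emph{Sparse case.} For $e(G)\le n^2/8$ a uniformly random balanced partition suffices: for every edge $uv$,
\[
\Pr[u,v\text{ lie in the same part}]=2\cdot\frac{\binom{n-2}{n/2-2}}{\binom{n}{n/2}}=\frac{n-2}{2(n-1)}<\frac12,
\]
so $\mathbb{E}[e(A)+e(B)]<e(G)/2\le n^2/16$, and some balanced partition beats the bound strictly.

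\emph{Dense non-bipartite case.} The remaining range is $G$ triangle-free and non-bipartite with $e(G)>n^2/8$. I would fix an optimal bipartition $V=X\cup Y$ with $|X|=p\ge n/2$ minimizing the number $t$ of \emph{bad edges} (edges within $X$ or within $Y$), let $e_{XY}=e(G)-t$, and write $p=\tfrac{3n}{4}+\delta$. Two observations drive the argument: (i) if $X\cap A$ consists of the $p-n/2$ vertices of $X$ with fewest $Y$-neighbors and $Y\subseteq A$, then the good (bipartite) class-edges number at most $(p-n/2)e_{XY}/p\le(p-n/2)(n-p)=n^2/16-\delta^2$; (ii) the bad-edge subgraph is itself triangle-free, so local swaps can place at least half of its edges across $A/B$, leaving at most $t/2$ bad class-edges. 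Triangle-freeness also ties $t$ to $e_{XY}$: each bad edge $uv\subseteq X$ satisfies $|N(u)\cap Y|+|N(v)\cap Y|\le n-p$, so bad edges strictly depress $e_{XY}$ and shrink the bound in~(i). Combining these, I expect $D_2^b(G)<n^2/16$ off the extremal configuration.

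\emph{Main obstacle.} The hard part will be quantifying the interplay of $\delta$, $t$, and $e_{XY}$ near the extremal configuration, where $p$ is close to $3n/4$ and $t$ is small but positive: the slack $\delta^2$ in~(i), together with the deficit $p(n-p)-e_{XY}$ forced by bad edges, must dominate $t/2$. I would tackle this either by a single exchange argument that minimizes good-class and bad-class edges simultaneously, or by a flag-algebra/SDP certificate along the lines of the authors' companion work and Razborov's resolution of the Sparse Half bound. Uniqueness then follows by tracing equality: the random and dense arguments give strict inequality, so equality can only arise in the bipartite case at $G\cong K_{3n/4,n/4}$.
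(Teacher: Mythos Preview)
Your bipartite and sparse cases are clean and correct, but the dense non-bipartite case is not a proof --- you yourself label it the ``main obstacle'' and offer only a strategy. The core problem is that steps (i) and (ii) optimize over different balanced partitions: in (i) you fix $A$ to contain the low-$Y$-degree vertices of $X$ together with all of $Y$, while (ii) requires swapping vertices to push bad edges across $A/B$; these cannot be performed independently, and you give no mechanism for doing both at once. Even granting both bounds simultaneously, what you would need is roughly $t/2 \le \delta^2 + \bigl(p(n-p)-e_{XY}\bigr)\cdot\frac{p-n/2}{p}$, and at $\delta=0$ this says the shortfall in $e_{XY}$ forced by $t$ bad edges must exceed a constant multiple of $t$. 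Your observation that each bad edge $uv\subseteq X$ has $|N(u)\cap Y|+|N(v)\cap Y|\le n-p$ controls only the $Y$-degrees of bad-edge endpoints, not the aggregate $e_{XY}$ over all of $X$, so it does not yield such a bound. The argument as written does not close, and listing ``a flag-algebra/SDP certificate'' as a fallback is not a proof.

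For comparison, the paper's decomposition is quite different from yours. It splits by independence number and degree concentration rather than by bipartiteness and edge count: when $\alpha(G)\ge n/2$ (which subsumes your bipartite case, since a bipartition class of size $\ge n/2$ is independent) it applies Mantel's theorem to $G[V\setminus I]$ for an independent set $I$ of size $n/2$ and then traces equality; when the degree sequence is tightly concentrated near $n/3$ it invokes Razborov's $27/1024$ sparse-half theorem to get $e(A)+e(A^c)\le 54n^2/1024<n^2/16$; and in the remaining case ($\alpha(G)<n/2$ with dispersed degrees) it carries out a computer-assisted flag algebra calculation. No elementary argument is offered for that last case, and the fact that the near-regular density-$1/3$ regime had to be peeled off separately (via Razborov) signals that this region is genuinely delicate --- so you should not expect your exchange heuristic to finish the job without substantial new input.
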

The problem of finding a balanced $2$-partition with fewest class-edges is known as MAX-BISECTION (see e.g. \cite{MR1367967,MR2228942}) in computer science. It is NP-complete with an easy reduction from MAX-CUT, the problem of finding a $2$-partition with fewest class-edges. The MAX-BISECTION problem has also received attention in combinatorics, see Bollob\'{a}s-Scott~\cite{MR1945378}, Alon-Bollob\'{a}s-Kri\-ve\-le\-vich-Sudakov~\cite{MR1983363}, Lee-Loh-Sudakov~\cite{MR3096334} and many others. One major difference compared to many other work in this area is that Theorem~\ref{theo:balancedcut} studies $D_2^b(G)$ as a function in the number of vertices instead of the number of edges.

Erd\H{o}s, Faudree, Rousseau and Schelp~\cite{MR1273598} proposed to study an unbalanced version of Erd\H{o}s' Sparse Half Conjecture, see Figure~\ref{fig:conj}(a) for an illustration.
\begin{conj}[Erd\H{o}s, Faudree, Rousseau and Schelp~\cite{MR1273598}]
\label{unbalconj2}
Let $G$ be an $n$-vertex graph and let $\alpha$ be an arbitrary constant, $53/120\leq \alpha\leq 1$. Further, let 
\begin{align*}
    \beta>\begin{cases} (2\alpha-1)/4, & \text{when } 17/30\leq \alpha\leq 1, \\
    (5\alpha-2)/25 & \text{when } 53/120\leq \alpha\leq 17/30. 
    \end{cases}
\end{align*}
If $n$ is sufficiently large and each $\lfloor \alpha n\rfloor$-subset of $V(G)$ spans at least $\beta n^2$ edges, then $G$ contains a triangle. 
\end{conj}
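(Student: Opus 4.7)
The statement to prove is \emph{Conjecture}~\ref{unbalconj2}, which is posed in the excerpt as an open problem; the plan below is therefore a strategy rather than a claim of solution. I would pass to the contrapositive: every triangle-free graph $G$ on $n$ vertices should contain an $\lfloor \alpha n\rfloor$-subset spanning at most the threshold number of edges. The two regimes $17/30\le \alpha\le 1$ and $53/120\le \alpha\le 17/30$ should be handled separately, because they have different extremal graphs. In the upper range the extremum is the balanced complete bipartite graph $K_{n/2,n/2}$: taking $(\alpha-1/2)n$ vertices from one side and $n/2$ from the other yields exactly $(2\alpha-1)n^2/4$ class-edges. In the lower range the extremum is the balanced blow-up of the Petersen graph: taking all four classes of a maximum independent set together with $(\alpha-2/5)n$ vertices from a fifth class produces $2\cdot(n/10)\cdot(\alpha-2/5)n=(5\alpha-2)n^2/25$ edges, since each vertex outside a maximum independent set in the Petersen graph has exactly two neighbors inside it. The transition $\alpha=17/30$ is where the two expressions agree, which is a first consistency check.

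For the upper range I would exploit the near-bipartite structure of dense triangle-free graphs. By Erd\H{o}s--Simonovits stability, together with the bound quoted in the introduction (every triangle-free graph can be made bipartite by removing at most $n^2/23.5$ edges), fix a max-cut bipartition $V(G)=A\cup B$ with $|A|$ and $|B|$ nearly equal and with a small error set of edges outside the cut. Within this structure, pick an $\lfloor\alpha n\rfloor$-subset by taking $(\alpha-1/2)n$ vertices from the larger side and $n/2$ from the smaller; a convexity argument on degrees into the chosen side, combined with an averaging over which side to split, absorbs the small deletion error and recovers the target $(2\alpha-1)n^2/4$ in the range $\alpha\ge 17/30$. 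For uniqueness and strictness of the bound one would promote the stability step into an exact structure theorem, following the template of Theorem~\ref{theo:balancedcut}.

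The range $53/120\le\alpha\le 17/30$ is the main obstacle, and is the reason the conjecture has resisted progress. Here $G$ need not be close to bipartite: the extremum is a Petersen blow-up, a genuinely non-bipartite triangle-free graph, so the stability tools of the previous step break down, and the problem sits inside the same regime as Erd\H{o}s' original sparse-half conjecture. The natural plan is a flag-algebra semidefinite-programming computation, tailored to $\alpha$ by introducing a type-$1$ flag that marks membership in the subset and minimizing the density of class-edges subject to subset density $\alpha$, hoping for a sum-of-squares certificate in which the Petersen blow-up appears as a sharp zero-eigenvalue configuration. Razborov's analysis of the balanced case $\alpha=1/2$ gave only $27/1024$ against the conjectured $1/50$, which illustrates the main difficulty: flag-algebra bounds on small flags typically overshoot Petersen-type constants, and matching $(5\alpha-2)/25$ likely requires either very large flags or a hybrid flag-algebra plus structural ``perturbation of the Petersen blow-up'' step, using stability for triangle-free graphs with independence number close to $2n/5$. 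Proving the bound uniformly on a whole sub-interval of $\alpha$, rather than at a single point, is itself a substantial obstacle and is where I would expect most of the effort to go; settling the two endpoints $\alpha=53/120$ and $\alpha=17/30$ exactly and then interpolating via a parametric SDP would be the most realistic intermediate goal.
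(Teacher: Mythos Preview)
You correctly recognize that Conjecture~\ref{unbalconj2} is open, and your outline is an honest strategy rather than a proof. The paper does not prove the conjecture either: its contribution is Theorem~\ref{theo: krivextension}, which pushes the known range from Krivelevich's $\alpha\ge 0.6$ down to $\alpha\ge 0.579$, still entirely inside the upper regime $\alpha\ge 17/30$. So there is no ``paper's own proof'' of the full statement to compare against, only a partial one, and your lower-range discussion (which the paper does not touch at all) is necessarily speculative. One small correction: the paper identifies the balanced $C_5$-blowup, not the Petersen blowup, as the tight example in the lower range; your Petersen computation happens to give the same value $(5\alpha-2)n^2/25$, so this is not an error, but the $C_5$-blowup is the standard witness.

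For the upper range your route diverges from the paper's. You propose Erd\H{o}s--Simonovits stability plus a max-cut bipartition, absorbing the deletion error by convexity. The paper instead splits on the independence number: when $\alpha(G)\ge(1-\alpha)n$ it invokes the elementary Lemma~\ref{lem: indseterdos} of Erd\H{o}s--Faudree--Rousseau--Schelp (no stability needed, just extend an independent set), and when $\alpha(G)<(1-\alpha)n$ it runs a flag-algebra computation. That computation is not your ``type-1 membership flag'' SDP; rather it encodes the hypothesis by averaging $e(A)$ over all $\lfloor\alpha n\rfloor$-sets $A$ containing $N(u)$ for a vertex $u$, and over all such $A$ with $N(u)\subseteq A$ and $N(v)\cap A=\emptyset$ for an edge $uv$, obtaining polynomial inequalities in labeled-flag densities (equations~\eqref{cutvertcutkriv} and~\eqref{cutedgecutkriv}) that are then shown infeasible by SDP. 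This is done at a discrete grid of $22$ values $\alpha_i$ in $[0.579,0.59]$ and extended to the full interval by a crude interpolation using $\Delta(G)\le(1-\alpha)n$. Your stability approach might in principle cover some sub-range, but as written it has a gap: stability only bites when $e(G)$ is close to $n^2/4$, and you do not say what to do with sparse triangle-free graphs that are far from bipartite; the paper's independence-number dichotomy avoids this issue entirely.
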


\begin{figure}[ht]
\begin{center}
    \begin{tikzpicture}[scale=8]
    \draw[->]      (0.35,0)--(1.05,0) node[below]{$\alpha$};
    \draw[->]      (0.4,0)--++(0,0.25) node[left]{$\beta$};
    \draw[thick]
     plot[domain=53/120:17/30,samples=40] (\x,{ (5*\x-2)/25})
     plot[domain=17/30:1,samples=40] (\x,{ (2*\x-1)/4})
    ;
    \draw 
    (53/120,0) -- ++(0,-0.03)node[below]{\small $\frac{53}{120}$}
    (17/30,0) -- ++(0,-0.03)node[below]{\small $\frac{17}{30}$}
    (0.6,0) -- ++(0,-0.03)node[below right]{\tiny $0.6$}
    (0.4,0) -- ++(0,-0.03)node[below left]{\tiny $0.4$}
    (1,0) -- ++(0,-0.03)node[below]{\tiny $1$}
    ;
    \draw (0.75,-0.1)node{(a)};
    \end{tikzpicture}
    \hskip 4em
    \begin{tikzpicture}[scale=8]
    \draw[->]      (0.45,0)--(1.05,0) node[below]{$\alpha$};
    \draw[->]      (0.5,0)--++(0,0.25) node[left]{$\beta$};
    \draw[thick]
     plot[domain=0.5857864377:1,samples=40]   (\x,{ (2*\x-1)/4})
     plot[domain=0.5:0.5857864377,samples=40] (\x,{ (1-\x)^2/4})
    ;
    \draw 
    (0.5857864377,0) -- ++(0,-0.03)node[below]{\tiny $2-\sqrt{2}$}
    (0.717,0) -- ++(0,-0.03)node[below]{\tiny $0.717$}
    (1,0) -- ++(0,-0.03)node[below]{\tiny $1$} 
    (0.5,0) -- ++(0,-0.03)node[below]{\tiny $\frac{1}{2}$}
    ;
    \draw (0.75,-0.1)node{(b)};
    \end{tikzpicture}    
\end{center}
    \caption{Bounds on $\beta$ from (a) Conjecture~\ref{unbalconj2}  and (b) Conjecture~\ref{unbalconj}.}
    \label{fig:conj}
\end{figure}
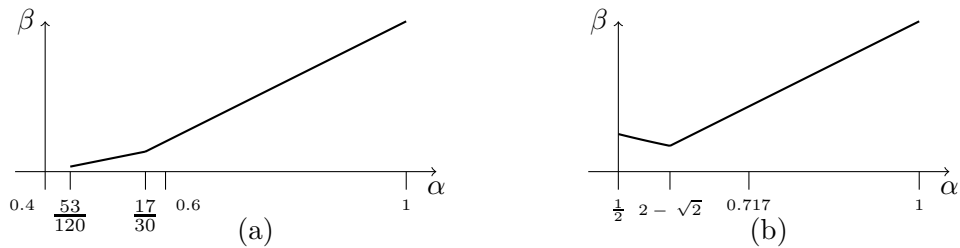

The complete balanced bipartite graph achieves equality in the first case and the balanced $C_5$-blowup achieves equality in the second case. Erd\H{o}s, Faudree, Rousseau and Schelp~\cite{MR1273598} proved Conjecture~\ref{unbalconj2} for $\alpha\geq 0.647$ and Krivelevich~\cite{MR1320169} for $\alpha\geq 0.6$, note that $17/30\approx 0.5666$, and $53/120\approx 0.4416$. Brandt~\cite{MR1606776} disproved Conjecture~\ref{unbalconj2} for $\alpha<0.474$.
He~\cite{MR1606776} observed that the Higman-Sims graph is a counterexample in this range. Yet, the conjecture is believed to be true for a wide range of $\alpha$. We extend the range for which Conjecture~\ref{unbalconj2} holds.
\begin{theo}
\label{theo: krivextension}
Conjecture~\ref{unbalconj2} holds for $\alpha\geq 0.579$.
\end{theo}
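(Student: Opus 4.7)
The plan is to refine Krivelevich's argument~\cite{MR1320169} for $\alpha \geq 0.6$, extending the range to $\alpha \geq 0.579$. The task --- equivalent to the contrapositive of Conjecture~\ref{unbalconj2} --- is to show that every triangle-free graph $G$ on $n$ vertices contains an $\lfloor \alpha n \rfloor$-subset $S$ with $e(G[S]) \leq (2\alpha - 1) n^2/4 + o(n^2)$. First I would dichotomize on the maximum degree $\Delta(G)$. If $\Delta(G) \geq \alpha n$, then the neighborhood $N(v)$ of a maximum-degree vertex is independent (by triangle-freeness) with $|N(v)| \geq \alpha n$, so any $\lfloor \alpha n \rfloor$-subset of $N(v)$ has $0$ edges, well below the target. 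The substantive case is therefore $\Delta(G) < \alpha n$.

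In this regime, I would combine the balanced bisection guaranteed by Theorem~\ref{theo:balancedcut} --- giving $V(G) = A \cup B$ with $|A|=|B|=n/2$ and $e(G[A]) + e(G[B]) \leq n^2/16$ --- with a structured extension. Assuming WLOG $e(G[A]) \leq e(G[B])$, construct $S = A \cup B'$, where $B' \subseteq B$ has size $(\alpha - 1/2)n$ and is chosen to minimize $e(G[B']) + e(G[A, B'])$. A uniformly random choice of $B'$ gives expected cost $e(G[A]) + (2\alpha-1)^2 e(G[B]) + (2\alpha - 1) e(G[A,B])$, which combined with Mantel's bound $e(G[A,B]) \leq n^2/4 - e(G[A]) - e(G[B])$ simplifies to $(2\alpha-1)n^2/4 + 2(1-\alpha)\bigl[e(G[A]) - (2\alpha-1) e(G[B])\bigr]$. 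Since the random bound slightly exceeds the target, I would then improve to a structured selection: greedily pick the vertices of $B$ with small degree into $A$ and controlled intra-$B$ degree, exploiting the triangle-free condition (which limits how two high-$A$-degree vertices can be adjacent within $B$) to save a term of order $e(G[B])(1-\alpha)$.

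The main obstacle is the stability analysis in the near-extremal regime. When $e(G[A]) + e(G[B])$ approaches $n^2/16$, the uniqueness clause of Theorem~\ref{theo:balancedcut} forces $G$ to be close to $K_{3n/4, n/4}$, whose larger part is an independent set of size $3n/4 \geq \alpha n$, giving a $0$-edge $\lfloor \alpha n \rfloor$-subset directly. When instead $e(G[A,B])$ is close to $n^2/4$ (so that $G$ is close to $K_{n/2, n/2}$), the random selection becomes essentially tight. The threshold $\alpha \geq 0.579$ should emerge from balancing these two regimes quantitatively: showing that in every triangle-free graph, either the structured selection beats the target, or the graph is sufficiently close to one of the two extremal configurations that an explicit low-edge subset exists. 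Pinning down the exact value $0.579$ as the correct improvement over Krivelevich's $0.6$ --- possibly augmented by a flag-algebra SDP certificate on the intermediate edge densities --- will be the most delicate part of the argument.
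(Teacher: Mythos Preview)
Your proposal has a genuine gap in the main case. The random-selection bound you derive,
\[
e(G[S]) \leq \frac{(2\alpha-1)n^2}{4} + 2(1-\alpha)\bigl[e(G[A]) - (2\alpha-1)e(G[B])\bigr] + o(n^2),
\]
is not ``slightly'' above the target: when $e(G[A])=e(G[B])=n^2/32$ (which is consistent with $e(A)+e(B)\le n^2/16$), the excess term is $2(1-\alpha)\cdot(1-(2\alpha-1))\cdot n^2/32 = (1-\alpha)^2 n^2/8$. At $\alpha=0.579$ this is about $0.022\,n^2$, against a target of $(2\alpha-1)n^2/4\approx 0.0395\,n^2$ --- you are over by more than $50\%$. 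Your ``greedy'' repair step is not actually specified (which vertices, what invariant, how large a saving), and the stability dichotomy you sketch only covers graphs close to $K_{3n/4,n/4}$ or $K_{n/2,n/2}$, whereas the bad cases above are generic triangle-free graphs far from both. Nothing in the outline produces the number $0.579$; you even concede that a flag-algebra certificate may be needed, which is the entire content of the paper's proof.

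The paper's argument is structurally different. It first handles $\alpha(G)\ge (1-\alpha)n$ via Lemma~\ref{lem: indseterdos} (note: this is a much weaker hypothesis than your $\Delta(G)\ge \alpha n$, so your dichotomy leaves far more in the ``hard'' case than necessary). In the remaining case $\alpha(G)<(1-\alpha)n$, the paper does \emph{not} build a single candidate set. Instead, it assumes every $\lfloor\alpha n\rfloor$-set is dense and averages this over all such sets containing $N(u)$ for a fixed vertex $u$, and separately over all such sets containing $N(u)$ and avoiding $N(v)$ for a fixed edge $uv$; this yields two polynomial inequalities in labeled-flag densities (\eqref{cutvertcutkriv} and \eqref{cutedgecutkriv}). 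A flag-algebra SDP then certifies that these, together with triangle-freeness and $\Delta\le(1-\alpha)n$, are jointly infeasible for each $\alpha_i=0.579+0.0005i$, $i=0,\dots,21$, and a short interpolation argument covers the intervals between grid points. The constant $0.579$ is exactly the output of this SDP, not of any combinatorial balancing.
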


Motivated by Conjecture~\ref{unbalconj2}, we conjecture the following unbalanced version of Theorem~\ref{theo:balancedcut},
 see Figure~\ref{fig:conj}(b) for an illustration.
\begin{conj}
\label{unbalconj}
Let $G$ be an $n$-vertex graph and $\alpha$ be an arbitrary constant, $\frac{1}{2}\leq\alpha\leq1$. Further, let
\begin{align*}
    \beta> \begin{cases}
        (2\alpha-1)/4& \text{for } 2-\sqrt{2}\leq \alpha \leq 1, \\
    (1-\alpha)^2/4 & \text{for }\frac{1}{2}\leq \alpha< 2-\sqrt{2}. 
    \end{cases}
\end{align*}
If $n$ is sufficiently large and for each $\lfloor \alpha n\rfloor$-subset $A$ of $V(G)$ 
we have $e(A)+e(A^c)\geq \beta n^2$, then $G$ contains a triangle. 
\end{conj}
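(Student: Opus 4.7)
The plan is to show: for every triangle-free graph $G$ on $n$ vertices there exists $A\subseteq V(G)$ with $|A|=\lfloor\alpha n\rfloor$ such that $e(G[A])+e(G[A^c])\le \beta^* n^2$, where $\beta^*=(2\alpha-1)/4$ for $\alpha\ge 2-\sqrt{2}$ and $\beta^*=(1-\alpha)^2/4$ for $\alpha<2-\sqrt{2}$. Equivalently, this asks for an unbalanced cut of size at least $e(G)-\beta^* n^2$ with one side of cardinality $\lfloor\alpha n\rfloor$. I would first verify the bound on complete bipartite graphs $K_{an,(1-a)n}$: parameterising $A$ by the number $x$ of vertices drawn from the larger side makes $e(A)+e(A^c)$ a concave quadratic in $x$, so its minimum is attained at an endpoint of the admissible interval. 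A short case analysis over these endpoints and over $a\in[1/2,1]$ yields the maximum $(2\alpha-1)/4$ for $\alpha\ge 2-\sqrt{2}$ (attained at $a=1/2$, i.e.\ $K_{n/2,n/2}$) and $(1-\alpha)^2/4$ for $\alpha<2-\sqrt{2}$ (attained at $a=(1+\alpha)/2$, i.e.\ $K_{(1+\alpha)n/2,(1-\alpha)n/2}$); the crossover at $\alpha=2-\sqrt{2}$ emerges from equating the two quadratics. This matches the conjecture and pins down the two extremal families.

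For general triangle-free $G$, I would mimic the stability framework used in the proof of Theorem~\ref{theo:balancedcut}. Trivially, if $e(G)\le\beta^* n^2$ then the bound holds for any $A$. In the opposite extreme $e(G)\ge(1/4-\varepsilon)n^2$, the Erd\H{o}s--Simonovits stability theorem forces $G$ to lie within $o(n^2)$ edges of a complete bipartite graph, at which point one selects $A$ to mirror the extremal partition in the bipartite model---either essentially inside the larger side (for $\alpha<2-\sqrt{2}$) or split nearly evenly between the two sides (for $\alpha\ge 2-\sqrt{2}$)---and absorbs the edge discrepancy into an $o(n^2)$ error. The intermediate regime $\beta^* n^2<e(G)<(1/4-\varepsilon)n^2$ is the most delicate: here I would combine the authors' recent $n^2/23.5$ bound on the make-bipartite problem with a local correction step. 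A bipartite subgraph of $G$ missing at most $n^2/23.5$ edges supplies a natural template partition, which is then repaired by a bounded number of vertex swaps to achieve exactly $|A|=\lfloor\alpha n\rfloor$; the contribution of the missing edges, together with the class-edges created by swaps, is bounded against the slack $e(G)-\beta^* n^2$ using the sub-Mantel edge count of $G$.

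The main obstacle is the regime near $\alpha=2-\sqrt{2}$, where both extremal templates are simultaneously tight and a perturbation argument must be sharp against each. Moreover, for $\alpha$ close to $1/2$ the target $\beta^*=(1-\alpha)^2/4$ is small, so the $n^2/23.5$ slack from the make-bipartite reduction overshoots; a finer structural argument controlling which vertices are moved and how many missing edges they cover would be needed. A promising route to circumvent these issues is to encode the minimisation of $e(A)+e(A^c)$ over $\alpha n$-subsets in the flag algebra framework and produce a semidefinite certificate with $\alpha$ appearing as a parameter; the phase transition at $\alpha=2-\sqrt{2}$ would then correspond to a change of tight dual constraints, and a stability/rounding argument would upgrade the numerical certificate to a rigorous proof for large $n$, mirroring the approach likely used for Theorem~\ref{theo:balancedcut}.
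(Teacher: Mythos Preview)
The statement you are attempting to prove is Conjecture~\ref{unbalconj}, which the paper explicitly leaves open: it is verified only at $\alpha=1/2$ (Theorem~\ref{theo:balancedcut}) and for $\alpha\ge 0.717$ (Theorem~\ref{unbaltheo}). There is therefore no proof in the paper to compare against for the full range of $\alpha$, and your proposal should be read as an attack on an open problem rather than a reconstruction.

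As such an attack, it has a genuine gap. The verification on complete bipartite graphs is correct and matches the paper's extremal constructions, but that only shows tightness. Your stability argument for $e(G)\ge(1/4-\varepsilon)n^2$ is plausible in that narrow regime, yet this is not where the difficulty lies: for $\alpha<2-\sqrt{2}$ the conjectured extremal graph $K_{(1+\alpha)n/2,\,(1-\alpha)n/2}$ has $(1-\alpha^2)n^2/4$ edges, squarely inside your ``intermediate regime''. There your plan is to take a max-cut (bipartite) template and repair it by ``a bounded number of vertex swaps'', but the template has sides of sizes $an,(1-a)n$ for some $a$ unrelated to $\alpha$, so reaching $|A|=\lfloor\alpha n\rfloor$ requires moving $\Theta(n)$ vertices, each potentially creating $\Theta(n)$ class-edges; the $n^2/23.5$ slack from the make-bipartite bound does not control this, as you yourself concede for $\alpha$ near $1/2$. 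The closing flag-algebra suggestion is a reasonable research direction but not a proof.

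For contrast, the paper's argument in the range $\alpha\ge 0.717$ (Section~\ref{sec:theounb}) uses neither stability nor the make-bipartite bound. It proves a random-cut lemma (Lemma~\ref{unbalrandomcut}) giving the conclusion whenever $e(G)\le\frac{-\alpha^2+4\alpha-2}{4(2\alpha-1)}n^2$, and then a case analysis on the independence structure (Lemmas~\ref{unbalanindep} and~\ref{unbal2indep}) forces $e(G)$ below this threshold. The cutoff $0.717$ is exactly where the inequality $\alpha(1-\alpha)\le\frac{-\alpha^2+4\alpha-2}{4(2\alpha-1)}$ fails, and the method does not extend further; closing the gap $1/2<\alpha<0.717$ is left open.
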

Note that Theorem~\ref{theo:balancedcut} verifies Conjecture~\ref{unbalconj} for $\alpha=1/2$.\footnote{We remark that numerical computations suggest that the proof of  Theorem~\ref{theo:balancedcut} could be extended to solve Conjecture~\ref{unbalconj} for $\alpha\in[0.5,0.5+\varepsilon]$ for some small $\varepsilon>0$.} If Conjecture~\ref{unbalconj} is true, then it is best possible. In the first case, it is shown by the complete balanced bipartite graph, and in the second case by the complete bipartite graph with class sizes $\lfloor(n+\alpha n)/2\rfloor$ and $\lceil(n-\alpha n)/2\rceil$. We prove Conjecture~\ref{unbalconj} for some range of $\alpha$. 
\begin{theo}
\label{unbaltheo}
Let $\alpha \geq 0.717$. Then 
Conjecture~\ref{unbalconj}
holds.
\end{theo}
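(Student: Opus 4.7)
The plan is to prove the contrapositive: for $\alpha\geq 0.717$ and every sufficiently large triangle-free graph $G$ on $n$ vertices there is an $\lfloor\alpha n\rfloor$-subset $A\subseteq V(G)$ with
\[
e(A)+e(A^c) \leq \frac{2\alpha-1}{4}n^2 + o(n^2).
\]
Since the conjecture's threshold $\beta > (2\alpha-1)/4$ is strict, this suffices for large $n$.

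I would combine a random-partition bound with a structural case analysis driven by the maximum degree. For $A$ chosen uniformly among $\lfloor\alpha n\rfloor$-subsets, $\mathbb{E}[e(A)+e(A^c)] = (2\alpha^2-2\alpha+1)e(G) + o(n^2)$. Setting $\gamma_\alpha := (2\alpha-1)/(2\alpha^2-2\alpha+1)$, this is at most $(2\alpha-1)n^2/4$ whenever $e(G)\leq \gamma_\alpha n^2/4$; at $\alpha=0.717$ we have $\gamma_\alpha\approx 0.73$. In particular, if $\Delta(G)\leq (1-\alpha)n$, then $e(G)\leq n\Delta/2\leq (1-\alpha)n^2/2\leq \gamma_\alpha n^2/4$ (a quick polynomial check gives $4\alpha^3-8\alpha^2+8\alpha-3\geq 0$ for $\alpha\geq 0.68$), so a uniformly random $A$ already works.

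Otherwise $\Delta(G) > (1-\alpha)n$; pick $v$ with $\deg(v)=\Delta$ and set $U:=N(v)$ and $W:=V\setminus U$. Since $G$ is triangle-free, $U$ is independent, so every edge lies in $W$ or between $U$ and $W$, giving $e(G)=e(W)+e(U,W)$. If $|U|\geq \alpha n$, take $A\subseteq U$ of size $\lfloor\alpha n\rfloor$ consisting of the vertices of $U$ with largest degree into $W$; then $e(A)=0$, and averaging yields
\[
e(A^c) = e(W) + e(W, U\setminus A) \leq e(G) - \tfrac{\alpha n}{|U|}\,e(U,W),
\]
from which the bound follows using $e(W)\leq (n-|U|)^2/4$ (Mantel applied to $G[W]$) and $|U|\geq \alpha n$. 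If $(1-\alpha)n < |U| < \alpha n$ (the extremal regime containing $K_{n/2,n/2}$), take $A := U\cup W'$ with $W'\subseteq W$ of size $\lfloor\alpha n\rfloor-|U|$. Then $e(A)+e(A^c) = e(U,W') + e(W) - e(W', W\setminus W')$, and choosing $W'$ uniformly at random gives, in expectation, $p\,e(U,W) + q\,e(W) + o(n^2)$ with $p = |W'|/|W|$ and $q = p^2+(1-p)^2$. Combining this with $e(U,W)+e(W)\leq n^2/4$ and $e(W)\leq |W|^2/4$ yields the target bound precisely when $\alpha\geq 0.717$.

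The main obstacle is this last subcase. Both the random-averaging inequality and Mantel's bound are tight at $G = K_{n/2,n/2}$, where $|U|=|W|=n/2$, $e(W)=0$ and $e(U,W)=n^2/4$; hence the coefficients $p,q$ must be balanced against $e(W)$ with care, and the numerical constant $0.717$ emerges as the threshold above which this balancing succeeds. Closing the gap down to the conjectural $\alpha = 2-\sqrt{2}$ would require either a deterministic choice of $W'$ that cuts more edges of $G[W]$ (for instance via a size-constrained max-cut of the triangle-free graph $G[W]$) or a finer stability version of Mantel's theorem controlling the structure of $G$ when $e(W)$ is not negligible.
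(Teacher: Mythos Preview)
The proof sketch has a genuine gap in the critical middle subcase $(1-\alpha)n<|U|<\alpha n$. The two inequalities you invoke there---Mantel on $G$ and Mantel on $G[W]$---are not enough to force $p\,e(U,W)+q\,e(W)\le(2\alpha-1)n^2/4$. Concretely, take $\alpha=0.717$ and $u=|U|/n=1/2$: then $p=2\alpha-1=0.434$ and $q=p^2+(1-p)^2\approx 0.508$, while $(x,y)=(e(W)/n^2,\,e(U,W)/n^2)=(1/16,\,3/16)$ satisfies both $x+y\le 1/4$ and $x\le(1-u)^2/4$, yet gives $py+qx\approx 0.113>0.1085=(2\alpha-1)/4$. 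Adding the degree-sum constraints $y\le u^2$ and $y+2x\le u(1-u)$ (coming from $\Delta=|U|$, which you do not mention) still does not close the gap; e.g.\ at $u=0.45$ the constrained maximum is about $0.1096>0.1085$. So the assertion that the balancing ``succeeds precisely when $\alpha\ge 0.717$'' is unsupported, and this particular threshold does not fall out of your argument.

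The paper proceeds quite differently and leans essentially on Theorem~\ref{theo:balancedcut}. Its key lemma says: if $e(G)\le\frac{-\alpha^2+4\alpha-2}{4(2\alpha-1)}n^2$, take a random set $A$ of size $(2\alpha-1)n$ and apply Theorem~\ref{theo:balancedcut} to the $2(1-\alpha)n$-vertex graph $G[A^c]$ to get a balanced split $A^c=A_1\cup A_2$ with $e(A_1)+e(A_2)\le(1-\alpha)^2n^2/4$; averaging the two partitions $(A\cup A_1,\,A_2)$ and $(A\cup A_2,\,A_1)$ then yields the bound. The remaining cases are organised by the independence structure rather than by $\Delta$: if $\alpha(G)\ge n/2$, or if there are two disjoint independent sets of size $(1-\alpha)n$, short counting arguments either finish directly or force $e(G)\le\alpha(1-\alpha)n^2$; and the inequality $\alpha(1-\alpha)\le\frac{-\alpha^2+4\alpha-2}{4(2\alpha-1)}$ is precisely what first holds at $\alpha\approx 0.717$ (the real root of $8\alpha^3-13\alpha^2+8\alpha-2$). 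Your random-$W'$ step is strictly weaker than this use of Theorem~\ref{theo:balancedcut}: a uniformly random split of an $m$-vertex triangle-free graph only controls class-edges to about $e/2\le m^2/8$, whereas Theorem~\ref{theo:balancedcut} gives $m^2/16$, and that factor of two is exactly what makes the argument close.
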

 Note that Theorem~\ref{unbaltheo} strengthens Erd\H{o}s, Faudree, Rousseau and Schelp's~\cite{MR1273598} result on Conjecture~\ref{unbalconj2} for $\alpha \geq 0.717$. We also prove a variant of Theorem~\ref{theo:balancedcut} for balanced $3$-partitions. 
\begin{theo}
\label{theo:baltri}
There exists $n_0$ such that for every integer $n\geq n_0$ and divisible by 3, the following holds. Let $G$ be a triangle-free graph on $n$ vertices. Then 
\begin{align*}
    D_3^b(G)\leq \frac{n^2}{36}.
\end{align*}
Additionally, if $D_3^b(G)=\frac{n^2}{36}$, then $G\cong K_{\frac{5n}{6},\frac{n}{6}}$ or $G\cong K_{\frac{n}{2},\frac{n}{2}}$. 
\end{theo}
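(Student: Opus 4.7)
The plan parallels the strategy for Theorem~\ref{theo:balancedcut}, but the case analysis is more delicate because there are now two extremal configurations: $K_{5n/6,n/6}$ (where the smaller bipartition class fits inside a single part of the 3-partition) and $K_{n/2,n/2}$ (where each bipartition class is split across two of the three parts). Given a triangle-free graph $G$ on $n$ vertices with $3\mid n$, I first fix a bipartition $V(G)=X\cup Y$ with $|X|\geq|Y|=:y$ minimizing the internal edge count $r:=e(X)+e(Y)$; by the authors' recent $n^2/23.5$ result we have $r\leq n^2/23.5$.

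The construction then branches on $y$. When $y\leq n/3$, I place all of $Y$ into one class $A_3$, fill $A_3$ with $n/3-y$ vertices of $X$, and split the remaining $2n/3$ vertices of $X$ into balanced classes $A_1,A_2$ of size $n/3$ via a refinement chosen to minimize internal $X$-edges. The cross-class contribution is bounded by $e_G(A_3\cap X,Y)\leq (n/3-y)y$, which is at most $n^2/36$ and is maximized at $y=n/6$, leaving a quadratic slack $(y-n/6)^2$ available to absorb internal edges. When $n/3<y\leq n/2$, I place $n/3$ vertices of $X$ into $A_1$, $n/3$ vertices of $Y$ into $A_2$, and the remaining vertices into $A_3$; the cross-class contribution in $A_3$ is at most $(|X|-n/3)(|Y|-n/3)=(|X|-n/3)(2n/3-|X|)\leq n^2/36$, with slack $(y-n/2)^2$. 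In both branches, selecting the vertices placed in the boundary class $A_3$ to be those with fewest neighbors across the bipartition yields additional savings on the cross-class contribution.

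The main obstacle is absorbing the internal edges $r$, which in the worst case can exceed $n^2/36$ and therefore cannot always be covered by the slack in the cross-class bound alone. I would address this via a dichotomy: either $G$ is sufficiently close to the Tur\'{a}n graph $K_{n/2,n/2}$ that a stability version of the Erd\H{o}s--Simonovits theorem forces $r=o(n^2)$ and the slack absorbs $r$, or $G$ is far enough from bipartite that $e(G)\leq n^2/12-\Omega(n^2)$, in which case a uniformly random balanced 3-partition already suffices, since each edge becomes a class-edge with probability $(n/3-1)/(n-1)\sim 1/3$ and thus the expected class-edge count is at most $e(G)/3\leq n^2/36$. The intermediate regime will require the refinement of $X$ and $Y$ to be optimized simultaneously with the cross-class bipartite contribution, possibly using local swap arguments on the boundary class $A_3$. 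Finally, for the equality characterization, tracing each inequality shows that equality forces $r=0$, $e_G(X,Y)=|X||Y|$, and either $y=n/6$ (giving $G\cong K_{5n/6,n/6}$) or $|X|=n/2$ (giving $G\cong K_{n/2,n/2}$).
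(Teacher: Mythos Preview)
Your plan has a genuine gap precisely where you flag the ``intermediate regime''. The dichotomy you propose --- $G$ close to $K_{n/2,n/2}$ (hence $r=o(n^2)$), or $e(G)\le n^2/12-\Omega(n^2)$ so a random balanced $3$-partition works --- does not cover all triangle-free graphs. The balanced $C_5$ blow-up has $e(G)=n^2/5$ and $r=D_2(G)=n^2/25$, so it lies in neither branch: the random partition gives roughly $n^2/15$ class-edges, and $r=n^2/25$ exceeds both slack terms $(y-n/6)^2$ and $(y-n/2)^2$ for the relevant $y=2n/5$. More generally, large $r$ simply does not force small $e(G)$; these are essentially independent parameters in the triangle-free world, so the second branch of your dichotomy is not a consequence of the failure of the first, and you have offered no argument for the whole range $n^2/12\lesssim e(G)\lesssim n^2/4$ with $r=\Theta(n^2)$. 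Your invocation of Erd\H{o}s--Simonovits stability is also misplaced: that theorem goes from $e(G)\ge n^2/4-o(n^2)$ to ``close to $K_{n/2,n/2}$'', which is the wrong direction here and in any case excludes the extremal graph $K_{5n/6,n/6}$. The fallback you describe (``optimize the refinement together with local swaps on $A_3$'') is a hope, not a proof; with enough care your construction does handle the $C_5$ blow-up, but nothing in your write-up establishes the bound in this regime, and the equality characterization then also hangs on an analysis you have not carried out.

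The paper avoids all of this by splitting on the independence number rather than on a max-cut bipartition. When $\alpha(G)\ge n/3$, one removes an independent set $I$ of size $n/3$ and applies Theorem~\ref{theo:balancedcut} to the $2n/3$-vertex graph $G[V\setminus I]$, yielding $D_3^b(G)\le (2n/3)^2/16=n^2/36$ immediately; the equality characterization is then read off from the characterization in Theorem~\ref{theo:balancedcut}. When $\alpha(G)\le n/3$ (so $\Delta(G)\le n/3$), a computer-assisted flag-algebra calculation gives the strict bound $D_3^b(G)<(1/36-10^{-4})n^2$, so this regime contributes no extremal examples. The structural idea you are missing is to peel off a large independent set first and reduce to the already-proved $2$-partition theorem, rather than to push a single bipartition-based construction through all densities.
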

Theorems~\ref{theo:balancedcut}, \ref{unbaltheo} and \ref{theo:baltri} can be understood as $\ell_1$-norm results in the following sense. Given a balanced $k$-partition $V(G)=A_1\cup\ldots \cup A_k$, denote by $(e(A_i)_i)$ the vector containing $e(A_i)$ as entrices. Further, for $p\geq 1$ and $k\geq 2$, we define
\begin{align*}
    D_{k,p}^b(G)=\min \|e(A_i)_i\|_p,
\end{align*}
where the minimum is taken over all balanced $k$-partitions $V(G)=A_1\cup\ldots \cup A_k$ and $\|\cdot\|_p$ denotes the $\ell_p$-norm. Note that $D_{k,1}^b(G)=D_{k}^b(G)$.
\begin{ques}
Let $p\geq 1$ and $k\geq2$. What is the maximum of $D_{k,p}^b(G)$ over all $n$-vertex triangle-free graphs $G$?
\end{ques}
The following results answer this question for the $\ell_\infty$-norm and $k=2,3$.
\begin{theo}
\label{theo:maxbip}
Let $G$ be a triangle-free graph on $n$ vertices, where $n$ is sufficiently large and divisible by $12$. Then there exists a balanced $2$-partition such that each class spans at most $n^2/18$ edges, i.e. $D_{2,\infty}^b(G)\leq n^2/18$.
\end{theo}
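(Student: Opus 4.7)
The plan is to identify $K_{2n/3, n/3}$ as the extremal graph and build a balanced partition tailored to the near-bipartition of $G$. On $K_{2n/3, n/3}$ with sides $X$ ($|X|=2n/3$) and $Y$ ($|Y|=n/3$), the balanced partition $A = X_1 \cup Y_1$, $B = X_2 \cup Y_2$ with $|X_i| = n/3$ and $|Y_i| = n/6$ gives $e(A) = e(B) = (n/3)(n/6) = n^2/18$; the divisibility $12 \mid n$ makes all the needed sizes integers.

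The first step is to verify the bound for all complete bipartite triangle-free graphs. For $K_{a, n-a}$ with $a \leq n/2$, parameterize balanced partitions by $x \in [n/2-a, n/2]$, the number of vertices from the larger side placed in one class. The class-edge counts become $f(x) = x(n/2-x)$ and $g(x) = (n-a-x)(x+a-n/2)$. The minimum of $\max(f(x), g(x))$ over the feasible interval is attained either at the endpoints (both giving $a(n/2-a)$, corresponding to one class being contained in the larger independent side) or at the balancing point $x = (n-a)/2$ (giving $a(n-a)/4$). A direct calculation shows $\min\{a(n/2-a),\; a(n-a)/4\} \leq n^2/18$ for every $a \leq n/2$, with equality exactly at $a = n/3$, i.e.\ for $K_{2n/3, n/3}$.

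For a general triangle-free $G$, I would split into cases by how close $G$ is to being bipartite. If $G$ has $e(G) \leq n^2/4 - \Omega(n^2)$ and is far from bipartite, a random balanced partition followed by local swaps suffices: expected class-edges are approximately $e(G)/4$, and concentration plus rebalancing keep both classes below $n^2/18$. Otherwise, by a Simonovits-type stability theorem, $G$ is within $o(n^2)$ edits of some $K_{a, n-a}$ with $a \leq n/2$. When $a$ is bounded away from $n/3$, the strict inequality $D_{2,\infty}^b(K_{a,n-a}) < n^2/18 - \Omega(n^2)$ from the first step absorbs the $o(n^2)$ perturbation trivially. When $a$ is close to $n/3$ (so $G$ is near-$K_{2n/3,n/3}$), apply the partition from the first step to a bipartition $X \cup Y$ of $V(G)$ with $|X| \approx 2n/3$, $|Y| \approx n/3$, yielding class-edge counts within $o(n^2)$ of $n^2/18$.

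The main obstacle is sharpening the near-extremal bound from $\leq n^2/18 + o(n^2)$ to the exact $\leq n^2/18$. To do this I would use the flexibility of choosing $X_1, X_2 \subseteq X$ and $Y_1, Y_2 \subseteq Y$ of the required sizes $n/3, n/3, n/6, n/6$: an averaging argument over uniformly random equi-sized sub-selections shows the expected class-edge count equals $|X_1| \cdot |Y_1|$ plus a linear edit-distance correction, and a deterministic choice that places the excess edges (which must lie inside $X$ or $Y$, since $K_{2n/3,n/3}$ is not Mantel-extremal) symmetrically across the two classes realizes the exact bound. Verifying that this flexibility always suffices, and ruling out pathological configurations inside $X$ or $Y$ by triangle-freeness, is the principal technical step.
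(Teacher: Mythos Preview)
Your approach has a genuine gap in the case analysis. The random balanced partition yields class sizes $e(G)/4+o(n^2)$, so it only succeeds when $e(G)<2n^2/9$, not when $e(G)\le n^2/4-\Omega(n^2)$ as you wrote. More seriously, the stability step is misapplied: Simonovits stability for triangle-free graphs says that if $e(G)=n^2/4-o(n^2)$ then $G$ is $o(n^2)$-close to $K_{n/2,n/2}$. It does \emph{not} assert that a triangle-free graph with, say, $e(G)\approx 2n^2/9$ is $o(n^2)$-close to some $K_{a,n-a}$; no such statement is standard, and it fails in general. The extremal configuration $K_{2n/3,n/3}$ itself has exactly $2n^2/9$ edges --- too many for the random partition, and edit-distance $\Theta(n^2)$ from $K_{n/2,n/2}$, so it lies squarely in the uncovered range. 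Perturbations of $K_{2n/3,n/3}$ (and many other triangle-free graphs in this density window) are likewise not handled by either of your cases.

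The paper's argument avoids stability entirely and gets the exact bound directly. Once $e(G)\ge(2/9-\varepsilon)n^2$, average degree forces $\alpha(G)\ge n/3$; take an independent set $I$ of size exactly $n/3$. Apply the $\alpha=3/4$ case of the Erd\H{o}s--Faudree--Rousseau--Schelp local density result (Lemma~\ref{ErFaRoSc}) to the triangle-free graph $G[I^c]$ on $2n/3$ vertices: this produces $C\subset I^c$ of size $n/2$ with $e(C)\le(2n/3)^2/8=n^2/18$. Its complement $C^c$ contains $I$, and any triangle-free graph on $n/2$ vertices with an independent set of size $n/3$ has at most $(n/3)(n/6)=n^2/18$ edges (take a maximum independent set $J$ in $C^c$, so $|J|\ge n/3\ge |C^c|/2$, and bound $e(C^c)\le |C^c\setminus J|\cdot|J|$). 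Thus $\{C,C^c\}$ is the desired partition, with no asymptotic slack to remove. Your computation for complete bipartite graphs is correct as a sanity check, but the route to the general case should go through the large-independent-set plus local-density argument rather than stability.
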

Note that this theorem is best possible. The complete bipartite graph with class sizes $n/3$ and $2n/3$ has the property that any balanced 2-partition has one of the parts with at least $n^2/18$ edges. 
\begin{theo}
\label{theo:maxtrip}
Let $G$ be a triangle-free graph on $n$ vertices, where $n$ is divisible by 3. Then there exists a balanced $3$-partition of its vertex set such that the number of edges in each class is at most $n^2/48+o(n^2)$, i.e. $D_{3,\infty}^b(G)\leq n^2/48+o(n^2)$.
\end{theo}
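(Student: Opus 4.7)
The plan is a density dichotomy on $m := e(G) \le n^2/4$, with threshold $m = 3n^2/16$. In the sparse case $m \le 3n^2/16$, I would take a uniformly random balanced 3-partition of $V(G)$: each edge lands in a fixed class with probability $\binom{n/3}{2}/\binom{n}{2} \to 1/9$, so $\mathbb{E}[e(A_i)] \le m/9 + o(n^2) \le n^2/48 + o(n^2)$ for each $i$. Applying Azuma's inequality to the martingale that reveals the vertex assignments one at a time yields $e(A_i) = \mathbb{E}[e(A_i)] + o(n^2)$ with high probability; a union bound over $i \in \{1,2,3\}$ produces a partition with $\max_i e(A_i) \le n^2/48 + o(n^2)$.

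For the dense case $m > 3n^2/16$, I would exploit the closeness of $G$ to the Mantel extremum. Take a max-cut bipartition $V(G) = X \cup Y$ and let $B$ denote the set of bad (non-crossing) edges. Choose partition sizes $a_i = |A_i \cap X|$ with $\sum_i a_i = |X|$ and each $a_i$ drawn from the feasible set $[0, n/12] \cup [n/4, n/3]$; an elementary case check verifies that any value of $|X| \in [0,n]$ can be written as a sum of three such values, since the feasible sums cover $[0,n/4] \cup [n/4,n/2] \cup [n/2,3n/4] \cup [3n/4,n]$. This sizing forces $a_i(n/3 - a_i) \le n^2/48$, so the bipartite-crossing contribution to any class $A_i$ is at most $n^2/48$. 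The bad edges are then handled by refining the partition within each side uniformly at random: the expected bad-edge count inside $A_i$ equals $b_{XX}(a_i/|X|)^2 + b_{YY}((n/3-a_i)/|Y|)^2$, and combining with the inequality $e(X,Y) + |B| \le n^2/4$ (Mantel applied to $G$) shows that the total expected edges per class remain $\le n^2/48 + o(n^2)$; Azuma concentration on the refinement then delivers a partition achieving this bound.

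The main obstacle lies in the intermediate regime $m \in (3n^2/16, n^2/4 - o(n^2))$, where pure randomness over-shoots the $n^2/48$ target (since $m/9 > n^2/48$) while standard Erd\H{o}s--Simonovits stability does not yet force $|B| = o(n^2)$. Bridging this gap requires a quantitative stability bound, such as the authors' $|B| \le n^2/23.5$ from the introduction strengthened to a linear dependence $|B| = O(n^2/4 - m)$, together with a refined choice of $(a_1, a_2, a_3)$ whose bipartite potential $a_i(n/3-a_i)$ carries slack proportional to $|B|/n^2$ so as to absorb the bad-edge contribution into the $o(n^2)$ error. Verifying the feasibility of this refined sizing across the full intermediate range of $m$ is the technical heart of the argument; tightness of the final bound is confirmed by the extremal examples $K_{n/2,n/2}$ and $K_{n/4,3n/4}$, both of which realize $D^b_{3,\infty} = n^2/48$ via the explicit sizing above.
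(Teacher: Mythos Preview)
Your sparse case ($m \le 3n^2/16$) via a random balanced $3$-partition with concentration is correct and is exactly the paper's Lemma~\ref{lemma:maxrandomcut}.

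For the dense case your max-cut scheme is genuinely different from the paper's, but the gap you flag in the intermediate regime is real and you have not closed it. When $|X| = n/2$, the only triples in $([0,n/12]\cup[n/4,n/3])^3$ summing to $n/2$ are permutations of $(n/3,n/12,n/12)$ and $(n/4,n/4,0)$, so two classes have $a_i(n/3-a_i) = n^2/48$ with zero slack; more generally, for many values of $|X|$ the feasible sizings are rigid at the boundary. Meanwhile F\"uredi-type stability only gives $|B| \le n^2/4 - m$, which at $m = 3n^2/16$ still permits $|B| = n^2/16$, and absorbing a constant fraction of that into a zero-slack class overshoots by $\Theta(n^2)$. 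Making the argument go through would require simultaneously exploiting the sub-density of the crossing edges and the distribution of bad edges between the two sides, coupled to the choice of sizing; this is precisely the case analysis you defer, and it is not evident it succeeds uniformly in $m$ and $|X|$.

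The paper avoids stability entirely. From $m \ge (3/16 - o(1))n^2$, Cauchy--Schwarz on degrees yields an edge $xy$ with $\deg(x)+\deg(y) \ge 4m/n \ge 3n/4 - o(n)$; triangle-freeness makes $N(x)$ and $N(y)$ disjoint independent sets, each of size at least $n/4 - o(n)$ (the case $\alpha(G) \ge n/2$ is handled first by the same template, starting from a single independent set of size $n/2$). Pick $I_1 \subset N(x)$, $I_2 \subset N(y)$ of size $n/4 + o(n)$, apply the Erd\H{o}s--Faudree--Rousseau--Schelp local density result (Conjecture~\ref{unbalconj2} at $\alpha = 2/3$, known for $\alpha \ge 0.6$) to the remaining $n/2$ vertices to obtain $C$ of size $n/3$ with $e(C) \le (n/2)^2/12 + o(n^2) = n^2/48 + o(n^2)$, and complete $I_1, I_2$ to classes $A, B$ of size $n/3$; since each contains an independent set of size $n/4$, one gets $e(A), e(B) \le (n/4)(n/12) + o(n^2)$.
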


%%%%%%%%%%%%%%%%%%%%%%%%%%%
\begin{comment}
\begin{theo}
\label{theo:maxbip}
Let $G$ be a triangle-free graph on $n$ vertices, where $n$ is sufficiently large and divisible by $12$. Then there exists a balanced $2$-partition such that each class spans at most $n^2/18$ edges.
\end{theo}
Note that this theorem is best possible. The complete bipartite graph with class sizes $n/3$ and $2n/3$ has the property that any balanced 2-partition has one of the parts with at least $n^2/18$ edges. 
\begin{theo}
\label{theo:maxtrip}
Let $G$ be a triangle-free graph on $n$ vertices, where $n$ is divisible by 3. Then there exists a balanced $3$-partition of its vertex set such that the number of edges in each class is at most $n^2/48+o(n^2)$.
\end{theo}
\end{comment}
%%%%%%%%%%%%%%%%%%%%%%%%%%%%%%

This theorem is best possible up to the $o(n^2)$-error term. The balanced complete bipartite graph has the property that each of its balanced $3$-partitions has one of the parts with at least $n^2/48$ edges. 

Keevash and Sudakov~\cite{MR1967401} proved the following local density theorem for $K_{r+1}$-free graphs. 
\begin{theo}[Keevash, Sudakov~\cite{MR1967401}]
\label{SudakobKeevashclique}
Let $r\geq 2$ and let $G$ be a $K_{r+1}$-free graph on $n$ vertices. If $1-\frac{1}{2r^2}\leq \alpha\leq 1$ then $G$ contains a set of $\lfloor \alpha n\rfloor$ vertices spanning at most $\frac{r-1}{2r}(2\alpha-1)n^2$ edges.
\end{theo}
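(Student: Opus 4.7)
The plan is to split the argument into two density regimes linked by the Erd\H{o}s--Simonovits stability theorem. Write $t := (1-\alpha)n$; then finding an $\alpha n$-subset $A$ with $e(G[A]) \leq \frac{r-1}{2r}(2\alpha-1)n^{2}$ is equivalent to producing $B \subseteq V(G)$ of size $t$ with
\[
\sum_{v \in B} d_G(v) - e(G[B]) \ \geq\ e(G) - \tfrac{r-1}{2r}(2\alpha - 1)n^{2}.
\]
Since $e(G) \leq \frac{r-1}{2r}n^{2}$ by Tur\'an's theorem, the right-hand side is at most $\frac{r-1}{r}tn$, so morally every vertex of $B$ must contribute close to the Tur\'an average degree $\frac{r-1}{r}n$ to the cut.

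For the \emph{sparse} case I would let $A$ be a uniformly random $\alpha n$-subset, giving
\[
\mathbb{E}\bigl[e(G[A])\bigr] \ =\ \tfrac{\alpha n(\alpha n - 1)}{n(n-1)}\,e(G) \ =\ (\alpha^{2}+o(1))\,e(G).
\]
Since $(2\alpha - 1)/\alpha^{2} = 1 - (1-\alpha)^{2}/\alpha^{2}$, the averaging bound suffices whenever $e(G) \leq \bigl(1-(1-\alpha)^{2}/\alpha^{2}\bigr)\frac{r-1}{2r}n^{2}$; for $\alpha \geq 1 - 1/(2r^{2})$ this allows $G$ to have up to $\frac{r-1}{2r}n^{2} - \Theta(n^{2}/r^{5})$ edges, which covers all but a thin slab of $K_{r+1}$-free graphs.

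For the complementary \emph{dense} case I would invoke stability to obtain a partition $V(G) = V_{1} \cup \cdots \cup V_{r}$ with $\bigl||V_{i}| - n/r\bigr| = o(n)$ and $\sum_{i} e(G[V_{i}]) = o(n^{2})$, where the $o$-errors can be made as small as desired by tightening the threshold separating the two cases. Because $\alpha \geq 1 - 1/(2r^{2})$ yields $t \leq n/(2r^{2}) < |V_{i}|$ for every $i$, I would take $V_{i^{\star}}$ minimizing $e(G[V_{i^{\star}}])$ and let $B$ consist of the $t$ vertices of $V_{i^{\star}}$ of largest external degree. The sum of external degrees in $V_{i^{\star}}$ equals $e(V_{i^{\star}}, V \setminus V_{i^{\star}}) = |V_{i^{\star}}|(n - |V_{i^{\star}}|) - o(n^{2})$, so this average, and hence the top-$t$ average, is at least $\frac{r-1}{r}n - o(n)$. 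Combined with $e(G[B]) \leq e(G[V_{i^{\star}}]) = o(n^{2})$, this yields $\sum_{v\in B} d_G(v) - e(G[B]) \geq \frac{r-1}{r}tn - o(n^{2})$, which meets the target comfortably.

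The principal obstacle is matching the two error scales at the boundary $\alpha = 1 - 1/(2r^{2})$: the $o(n^{2})$ stability slack in the dense case must fit inside the $\Theta(n^{2}/r^{5})$ margin extracted from $(2\alpha - 1)/\alpha^{2} = 1 - \Theta(1/r^{4})$ in the sparse case. This forces using a quantitative form of Erd\H{o}s--Simonovits stability with explicit polynomial dependence on $r$, and carefully tracking constants throughout. The stated range $\alpha \geq 1 - 1/(2r^{2})$ is essentially the largest for which this sparse/dense dichotomy closes; pushing below that threshold would require a fundamentally new idea, as the random upper bound $\alpha^{2} e(G)$ then genuinely exceeds the target even on the Tur\'an graph.
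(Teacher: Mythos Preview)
This theorem is not proved in the paper; it is quoted from Keevash and Sudakov~\cite{MR1967401}. The Keevash--Sudakov argument, whose two main ingredients the present paper restates as Lemmas~\ref{lem: edgesinpartitiongraph} and~\ref{lem:KeesudindesetKr} (and then reuses to prove the extension Theorem~\ref{KeevashSudakovextension}), is structural rather than stability-based: one shows that a $K_{r+1}$-free graph with $m$ edges contains an independent set of size at least $2(r-1)m/n-(r-2)n$, decomposes $V(G)$ into an independent set $X$, a portion $Y$ covered by vertex-disjoint $K_r$'s, and a $K_r$-free remainder $Z$, and then applies the sharp edge-counting inequality of Lemma~\ref{lem: edgesinpartitiongraph} to suitably chosen sub-configurations. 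No Erd\H{o}s--Simonovits stability enters.

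Your stability route is a natural alternative, but the obstacle you flag is real and the rule ``pick $V_{i^\star}$ minimizing $e(G[V_{i^\star}])$'' does not resolve it. Consider the complete $r$-partite graph with part sizes $n/r+d,\ n/r-d,\ n/r,\ldots,n/r$; here $c=d^2/n^2$ and every $e(V_i)=0$, so your criterion does not distinguish the parts. If you happen to take $B$ inside the large part $V_1$, each vertex there has degree exactly $\tfrac{r-1}{r}n-d$, giving $\sum_{v\in B}d(v)-e(B)=\tfrac{r-1}{r}tn-td$, and the required inequality becomes $td\le cn^2=d^2$, i.e.\ $t\le d=\sqrt{c}\,n$. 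Since the dense regime has $c$ of order $(1-\alpha)^2$ while $t=(1-\alpha)n$, this fails precisely at the boundary. The correct move in this example is to take $B$ from the \emph{small} part $V_2$, whose vertices have degree $\tfrac{r-1}{r}n+d$; so the selection rule must incorporate average degree, not just internal edge count, and reconciling those two demands across all near-extremal configurations is the substantive step. The Keevash--Sudakov structural lemmas handle exactly this trade-off directly, bypassing stability altogether.
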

They~\cite{MR1967401} also conjectured that their theorem should hold for $\alpha\geq 1-1/r$ when $r\geq 3$. Using some of their arguments, we extend their result along the lines of Conjecture~\ref{unbalconj}.
\begin{theo}
\label{KeevashSudakovextension}
Let $r\geq 1$. There exists $c_r<1$ such that the following holds for $c_r\leq \alpha\leq 1$. Let $n$ be an integer such that $\alpha n$ is an integer. Then every $K_{r+1}$-free graph $G$ on $n$ vertices contains a set $A$ of $ \alpha n$ vertices such that 
\begin{align*}
    e(A)+e(A^c)\leq  \frac{r-1}{2r}(2\alpha-1)n^2.
\end{align*}
\end{theo}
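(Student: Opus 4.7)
The plan is to mirror the proof of Keevash and Sudakov's Theorem~\ref{SudakobKeevashclique}, recalling that the extremal configuration is the Tur\'an graph $T_r(n)$ partitioned as $V_1 \cup \cdots \cup V_r$ with $A^c$ chosen to be a $(1-\alpha)n$-subset of one class $V_1$. In this configuration $A^c$ is independent, so $e(A^c) = 0$ and $e(A) = \frac{r-1}{2r}(2\alpha-1)n^2$; the Keevash--Sudakov bound on $e(A)$ alone already matches our target, which is what makes the extension plausible.

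The argument I would use dichotomizes on $e(G)$. If $e(G) \leq \frac{r-1}{2r}(2\alpha-1)n^2$, any $\alpha n$-subset $A$ trivially gives $e(A) + e(A^c) \leq e(G)$, and we are done. In the complementary regime $e(G) > \frac{r-1}{2r}(2\alpha-1)n^2$, the gap $\frac{r-1}{2r}n^2 - e(G) < \frac{r-1}{r}(1-\alpha)n^2$ is small for $\alpha$ close to $1$. Taking $c_r$ close enough to $1$, the Erd\H{o}s--Simonovits stability theorem applies, producing an $r$-partition $V(G) = V_1 \cup \cdots \cup V_r$ with $\big||V_i| - n/r\big| = o(n)$ and $\sum_{i=1}^{r} e(V_i) = o(n^2)$. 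I would then place $A^c$ as a carefully chosen $(1-\alpha)n$-subset of some $V_i$ --- for instance, the part minimizing the number of missing cross-part edges --- with the elements picked so as to minimize $e(A^c)$ while maximizing $\sum_{u \in A^c} d_G(u)$. Choosing $c_r > 1 - 1/r$ ensures that such a subset fits inside one part.

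The key identity is $e(A) + e(A^c) = e(G) - e(A, A^c)$, where $e(A, A^c) = \sum_{u \in A^c} d_G(u) - 2e(A^c)$. In the Tur\'an graph the right-hand side equals $\frac{r-1}{r}(1-\alpha)n^2$ exactly and the inequality is tight, so the task is to estimate how much the near-Tur\'an structure of $G$ perturbs this value: the ``missing'' Tur\'an edges incident to $A^c$ reduce $e(A, A^c)$, and the ``extra'' non-Tur\'an edges inside $V_1$ contribute to $e(A^c)$, while the slack $e(T_r) - e(G) \geq 0$ pushes in the opposite direction. The hard part will be showing that these perturbations cancel the right way for some choice of $V_1$ and $A^c$; this is the step that mirrors the most delicate part of Keevash--Sudakov's argument, where one argues that for a local-minimum partition the ``excess'' and the ``deficit'' cannot both be large. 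Since the theorem only demands existence of some $c_r < 1$, we have the flexibility to pick $c_r$ close enough to $1$ that the stability error is dominated by the slack needed, and we make no attempt to optimize $c_r$.
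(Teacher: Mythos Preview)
Your approach differs substantially from the paper's. The paper argues by induction on $r$: the base case $r=2$ is Theorem~\ref{unbaltheo}, and for $r\ge 3$ it invokes two lemmas of Keevash and Sudakov (Lemma~\ref{lem: edgesinpartitiongraph}, bounding the edges of a graph partitioned into an independent set, a $K_r$-tiled set, and a remainder; and Lemma~\ref{lem:KeesudindesetKr}, forcing a large independent set from high edge density) to extract a large independent set $U$, a maximal $K_r$-tiling $T$ of $V\setminus U$, and a $K_r$-free leftover $S$. Three successive applications of Lemma~\ref{lem: edgesinpartitiongraph} to carefully chosen $\alpha n$-vertex subgraphs squeeze $|S|$ into a range where the induction hypothesis applies to $G[S]$. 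This yields the asymptotic bound (Proposition~\ref{KeevashSudakovextensionprop}); the exact inequality is then recovered by a separate blow-up argument. No stability theorem is used, and the Keevash--Sudakov argument you allude to is this structural decomposition, not a local-minimum-partition analysis.

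Your stability route is plausible but has a genuine gap precisely where you flag it. In the hard case of your dichotomy the edge deficit from Tur\'an is $D\le \frac{r-1}{r}(1-\alpha)n^2$, and even quantitative stability (F\"uredi) bounds the intra-class edges of the best $r$-partition only by something of order $D$, i.e.\ the same scale $\Theta((1-\alpha)n^2)$ as the cross-cut $e(A,A^c)=\frac{r-1}{r}(1-\alpha)n^2-D$ you must certify. Since the Tur\'an graph meets the target with equality, there is no free slack; taking $c_r$ closer to $1$ shrinks both the error and the room at the same rate, so it does not by itself close the gap. You would need an actual argument matching the deficit $D$ against the degree loss on your chosen $A^c$, and you have not supplied one. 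Even if this step were completed, stability naturally delivers only an asymptotic bound; the passage to the exact inequality requires an additional device such as the paper's blow-up argument, which your proposal omits.
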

Note that both Theorems~\ref{SudakobKeevashclique} and \ref{KeevashSudakovextension} are extensions of Tur\'an's theorem~\cite{Turanstheorem} and also both Theorems are sharp, where equality is achieved by the \emph{Tur\'an graph}. This is the graph constructed by partitioning a set of $n$ vertices into $r$ parts of sizes as equal as possible, and where two vertices are adjacent iff they are in different parts.

We further remark that all of our results on $K_{r+1}$-free graphs, namely Theorems~\ref{theo:balancedcut}, \ref{theo: krivextension}, \ref{unbaltheo}, \ref{theo:baltri}, \ref{theo:maxbip}, \ref{theo:maxtrip}, \ref{KeevashSudakovextension} can be extended to $H$-free graphs where $H$ has chromatic number $r+1$ when adding an additional error term of $o(n^2)$. This follows form a standard application of Szemer\'edi's Regularity Lemma, for example as done in \cite{MR1967401} or \cite{MR2359832}.

Some proofs in this paper use flag algebras to describe cuts in graph. This idea comes from Naves~\cite{navesgrwc} who proposed it at the Graduate Research Workshop in Combinatorics. It was also used by Hu, Lidick\'y, Martins, Norin, and Volec~\cite{flagmulti} and by the authors in~\cite{flagbipartite}.

Our paper is organized as follows. In Section~\ref{sec:openprob} we state various conjectures on related questions for triangle-free and $K_4$-free graphs. In Section~\ref{secmT1} we prove Theorem~\ref{theo:balancedcut}, in Section~\ref{sec:KrivextensionK3} we prove Theorem~\ref{theo: krivextension}, in Section~\ref{sec:baltri} we prove Theorem~\ref{theo:baltri}, in Section~\ref{sec:theounb} we prove Theorem~\ref{unbaltheo}, in Section~\ref{sec:clique-freegraphs} we prove our result on $K_{r+1}$-free graphs, Theorem~\ref{KeevashSudakovextension} and finally in Section~\ref{sec:maxkpart} we prove Theorems~\ref{theo:maxbip} and \ref{theo:maxtrip}. 

\section{Conjectures on related questions}
In this section we state various conjectures and questions on partition problems for triangle-free and $K_4$-free graphs. 
\label{sec:openprob}
\subsection{Triangle-free graphs}

Denote by $D_k(G)$ the minimum number of edges which can be deleted from $G$ to make it $k$-partite. 

\begin{conj}
Let $G$ be a triangle-free graph on $n$ vertices. Then  $D_3(G)\leq n^2/121$.
\end{conj}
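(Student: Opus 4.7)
The plan is to first identify the conjectured extremal example. Since $121 = 11^2$ and the Gr\"otzsch graph $M_4$ is a triangle-free graph on $11$ vertices with chromatic number $4$ that is $4$-edge-critical (deleting any edge yields a $3$-chromatic graph), its balanced blow-up on $n = 11t$ vertices is a triangle-free graph $G^*$. Deleting all $t^2$ edges between one chosen pair of adjacent parts reduces $G^*$ to the blow-up of the Gr\"otzsch graph minus an edge, which is $3$-chromatic; hence $D_3(G^*) \leq t^2 = n^2/121$. The matching lower bound should follow from a standard LP/convexity argument showing that every fractional $3$-coloring of the Gr\"otzsch graph contributes at least $1$ to $\sum_{ij \in E} \sum_c x_i(c) x_j(c)$, which, after scaling by $t^2$, certifies that the Gr\"otzsch blow-up indeed achieves $D_3 = n^2/121$.

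For the upper bound on arbitrary triangle-free $G$, my plan is to follow the general strategy used elsewhere in this paper: apply Szemer\'edi's regularity lemma to replace $G$ by a weighted triangle-free graph $H$ of bounded order, at the cost of an $o(n^2)$ error, and then solve the resulting weighted optimization problem. After regularization, the question reduces to: for every triangle-free weighted graph $H$, is there a fractional $3$-coloring leaving total monochromatic weight at most $1/121$? I would attack this with two complementary approaches. The first is a flag algebra computation, following the encoding of partitions by colored vertices introduced by Naves~\cite{navesgrwc} and used in~\cite{flagbipartite}, searching for a semidefinite programming certificate of the tight bound $1/121$. The second is a combinatorial stability argument: show that any triangle-free graph with $D_3(G)$ close to $n^2/121$ must be close in edit distance to a Gr\"otzsch graph blow-up, then exploit the rigid structure of near-Gr\"otzsch-blowups to push through to the exact bound.

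The hard part will be the complexity of the extremal configuration. Unlike the $2$-partite analog (Theorem~\ref{theo:balancedcut}), where the extremal graphs are bipartite with very simple structure, the Gr\"otzsch graph has $11$ vertices with a nontrivial automorphism group, which makes both the flag algebra computation and the stability argument substantially heavier: one expects to need flags on seven or eight vertices to certify the bound, pushing against the current computational frontier for semidefinite solvers. Moreover, ruling out other potential extremal $4$-chromatic triangle-free candidates --- including higher Mycielski graphs or blow-ups built from other $4$-chromatic triangle-free cores --- and classifying the graphs achieving equality will likely require delicate case analysis, which I expect to be the crux of the stability-based part of the argument.
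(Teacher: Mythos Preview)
This statement is presented in the paper as an open \emph{conjecture}, not a theorem; the paper offers no proof and does not claim one. The only argument the paper supplies is the sharpness direction: that the balanced blow-up of the Gr\"otzsch graph satisfies $D_3(G) \geq n^2/121$, which it deduces from a result of Erd\H{o}s, Gy\H{o}ri and Simonovits on canonical edge deletions. You have correctly identified this extremal construction (and also sketched the easy matching upper bound $D_3(G^*) \leq n^2/121$ for that specific blow-up).

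What you have written is therefore not a proof but a research plan for attacking an open problem. The plan is sensible in outline --- a regularity reduction followed by a flag algebra certificate or a stability argument is indeed the natural line of attack --- but you yourself flag the obstructions: the Gr\"otzsch graph has $11$ vertices, so a tight flag algebra certificate would likely require flags of size well beyond what is currently tractable, and the stability route would have to rule out all other triangle-free $4$-chromatic cores. None of this is carried out, and the conjecture remains open. So there is no gap to diagnose in the sense of a flawed argument; rather, the entire upper-bound direction is the missing piece, and the paper makes no claim to have filled it.
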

If true, then this conjecture is sharp. Let $H$ be the balanced blow-up of the Gr\"otzsch graph. The \emph{Gr\"otzsch graph}, see Figure~\ref{fig:grotzsch}, has 11 vertices and 20 edges and has the fewest vertices among all triangle-free 4-chromatic graphs~\cite{MR0360330}. A result of Erd\H{o}s, Gy\H{o}ri and Simonovits~\cite[Theorem 7]{Howmany} states that there is a canonicial ``edge deletion'' achieving the minimum of $D_3(H)$. Every canonical edge deletion contains at least $(n/11)^2$ edges.

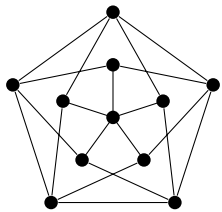
\begin{figure}[h!]
\begin{center}
\begin{tikzpicture}[scale=0.7]
\draw[every node/.style={inner sep=1.8pt,fill,circle}]
\foreach \x in {0,...,4}{ (0,0)--(90+72*\x:1) node(x\x){}  (90+72*\x:2) node(y\x){}}
(0,0) node{}
(y0)--(y1)--(y2)--(y3)--(y4)--(y0)
(y0)--(x1)--(y2)--(x3)--(y4)--(x0)--(y1)--(x2)--(y3)--(x4)--(y0)
;
\end{tikzpicture}
\end{center}
    \caption{The Gr\"otzsch graph.}
    \label{fig:grotzsch}
\end{figure}
All previous problems can be stated for $d$-regular graphs. While most of them become easier, a version of Theorem~\ref{theo:balancedcut} for regular graphs seems to increase the difficulty of the question.
\begin{ques}
What is the maximum of $D_2^b(G)$ over all $n$-vertex triangle-free regular graphs?
\end{ques}
The balanced $C_5$-blow-up seems to be a reasonable candidate for the extremal example when $10$ divides $n$.

\begin{comment}
\begin{conj}
Let $G$ be a triangle-free graph on $n$ vertices. Then there is a balanced partition of its vertex set into three classes such that the number of edges in each class is at most $?$ edges.
\end{conj}

\begin{conj}
Let $G$ be a triangle-free graph on $n$ vertices. Then there is a $3$-partition of its vertex set into three classes such that the number of edges in each class is at most $?$ edges.
\end{conj}
\end{comment}

\subsection{\texorpdfstring{$K_4$}\ -free graphs}
Many of the questions and conjectures stated in this paper can be asked for $K_4$-free graphs or more generally $H$-free graphs. In this section we present some conjectures on $K_4$-free graphs. 

A result by Sudakov~\cite{MR2359832} states that every $K_4$-free graph can be made bipartite by removing at most $n^2/9$ edges. This is sharp since the balanced complete 3-partite graph needs at least $n^2/9$ edges removed to make it bipartite. Sudakov~\cite{MR2359832} also conjectured a generalization of his result.   
\begin{conj}[Sudakov~\cite{MR2359832}]\label{conj:benny}
Fix $r\ge 3$. For every $n$-vertex $K_{r+1}$-free graph $G$, it holds that
\[
D_2(G) \leq \begin{cases} \frac{(r-1)^2}{4r^2}\cdot n^2   &\quad r\hbox{ odd, and} \\ \frac{r-2}{4r}\cdot n^2   &\quad r\hbox{ even.} \end{cases}
\]
\end{conj}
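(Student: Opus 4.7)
The plan is to follow the blueprint of Sudakov's proof of the $r=3$ case~\cite{MR2359832} and push it through for general $r$ via a stability-based reduction. First I would pin down the extremal structure: the Tur\'an graph $T_r(n)$ has $r$ color classes of size $n/r$, and its optimal bipartition splits these parts into two groups of sizes $\lceil r/2\rceil$ and $\lfloor r/2\rfloor$. The class-edges are exactly the edges between parts on the same side, totalling $\bigl[\binom{\lceil r/2\rceil}{2}+\binom{\lfloor r/2\rfloor}{2}\bigr](n/r)^2$, which simplifies to the two conjectured cases. Any proof must converge to this structure.

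Next I would invoke the Erd\H{o}s--Simonovits stability theorem: every $K_{r+1}$-free graph $G$ with $e(G)\ge \mathrm{ex}(n,K_{r+1})-\delta n^2$ is $\varepsilon(\delta)n^2$-close in edit distance to $T_r(n)$. Splitting into two cases, in the near-extremal case I would inherit the bipartition from $T_r(n)$ and argue that the $\varepsilon n^2$ edits, followed by iterated local swaps of vertices across the cut (each strictly decreasing the number of class-edges), converge to a bipartition meeting the conjectured bound. This requires a quantitative form of local optimality of the Tur\'an bipartition, so that the $\varepsilon n^2$ error is absorbed by the slack.

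In the far-from-extremal case $e(G)\le \mathrm{ex}(n,K_{r+1})-\delta n^2$. A uniform random bipartition gives $\mathbb{E}[D_2(G)]\le e(G)/2$, which still exceeds the target by a factor bounded away from $1$ for every fixed $r$, so a plain random cut is not sharp. To close the gap I would combine the random cut with induction on $r$ applied to the neighborhood $N(v)$ of a vertex of near-maximum degree: since $G[N(v)]$ is $K_r$-free, the inductive hypothesis yields a good bipartition of $N(v)$ that can be extended to all of $V(G)$ in the spirit of the Keevash--Sudakov argument~\cite{MR1967401} used for Theorem~\ref{KeevashSudakovextension}.

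The main obstacle is precisely this far-from-extremal step for large $r$: one must convert a deficit of $\delta n^2$ in $e(G)$ into a deficit of comparable order in $D_2(G)$. For $r=3$ this follows from a Max-Cut estimate on a triangle-free neighborhood, but the analogous reduction for larger $r$ produces a $K_r$-free graph whose Max-Cut is harder to control tightly, and the inductive loss accumulates. A promising alternative route is a flag-algebraic computation of the extremal density $\max D_2(G)/n^2$ over $K_{r+1}$-free graphs, in the spirit of other results in this paper, but the numerical complexity grows rapidly with $r$ and no uniform argument covering all $r$ is presently known.
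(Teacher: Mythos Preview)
The statement you are attempting to prove is not a theorem in the paper; it is Conjecture~\ref{conj:benny}, an open problem due to Sudakov. The paper does not contain a proof of it. It only records that Sudakov proved the case $r=3$ in~\cite{MR2359832} and that the case $r=5$ was verified by Hu, Lidick\'y, Martins, Norin, and Volec~\cite{flagmulti} via flag algebras, and remarks that the even-$r$ case appears harder than the odd one. So there is no ``paper's own proof'' to compare against.

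Your proposal is not a proof either, and you acknowledge as much in the final paragraph. The stability-plus-induction outline you sketch is a reasonable heuristic, but the step you flag as the main obstacle---converting an edge deficit of $\delta n^2$ into a comparable deficit in $D_2(G)$ uniformly in $r$---is exactly where the problem is open. A random bipartition only gives $D_2(G)\le e(G)/2$, which for $e(G)$ near the Tur\'an bound is roughly $\frac{r-1}{4r}n^2$, strictly larger than both conjectured values for every $r\ge 3$; the inductive patching you suggest on a neighborhood does not close this gap in any known way. In short, what you have written is a plausible plan of attack with an identified and genuine gap, not a resolution of the conjecture.
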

The conjecture was verified for $r=5$ by Hu, Lidick\'y, Martins, Norin, and Volec~\cite{flagmulti} using flag algebras. The conjecture for even $r$ seems to be more difficult than for odd $r$.

Recently Reiher~\cite{ReiherK4}, building up on work of Liu and Ma~\cite{LiuMaK4}, proved the corresponding sparse-half-version of this theorem: 
Every $K_4$-free graphs contains a set of size $n/2$ spanning at most $n^2/18$ edges. The following conjecture, if true, were to generalize both Sudakov's and Reiher's result. 
\begin{conj}
\label{conj:sparsedo}
Let $n$ be an even integer and $G$ be a $K_4$-free graph on $n$ vertices. Then there exists a balanced $2$-partition of its vertex set $V(G)=A\cup B$ such that $e(A)+e(B)\leq n^2/9$.
\end{conj}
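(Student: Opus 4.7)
The plan is to follow the two-regime template from the proof of Theorem~\ref{theo:balancedcut} and adapt it to the $K_4$-free setting. First, I would verify the target bound on the conjectural extremal graph $K_{n/3,n/3,n/3}$: writing $a_i=|A\cap V_i|$ for a balanced bipartition with $V_i$ the three Tur\'an parts, one obtains
\[ e(A)+e(B)=\sum_{i<j}\bigl(a_ia_j+(n/3-a_i)(n/3-a_j)\bigr)=\tfrac{n^2}{4}-\sum_i a_i^2, \]
and minimising over $0\le a_i\le n/3$ with $\sum a_i=n/2$ gives exactly $n^2/9$, attained at $(n/3,n/6,0)$. This pins down the target value and already suggests what an optimal bipartition looks like in general: it should take one Tur\'an part entirely into $A$, another entirely into $B$, and split the third evenly.

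The natural dichotomy is according to $e(G)$. In the \emph{sparse regime} $e(G)\le 2n^2/9$, a uniformly random balanced bipartition $(A,B)$ has
\[ \mathbb{E}[e(A)+e(B)]=e(G)\cdot\frac{n-2}{2(n-1)}<\frac{n^2}{9}, \]
so averaging immediately yields a balanced bipartition meeting the conjecture.

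In the \emph{dense regime} $e(G)>2n^2/9$, I would combine Reiher's theorem---which produces a set $A$ of size $n/2$ with $e(A)\le n^2/18$---with a quantitative $K_4$-free stability result. Near the Tur\'an extremum, an approximate tripartition $V=V_1\cup V_2\cup V_3$ with $|V_i|\approx n/3$ leads to the construction $A=V_1\cup W$ and $B=V_3\cup(V_2\setminus W)$ with $|W|=n/2-|V_1|$, for which $e(A)\approx|V_1||W|=n^2/18$ and $e(B)\approx n^2/18$, matching the bound up to stability errors. Graphs whose densest region is bipartite (e.g.\ close to $K_{n/2,n/2}$) can be handled separately since $D_2^b$ is then much smaller than $n^2/9$; the explicit computation for $K_{a,n-a}$ shows $D_2^b\le n^2/18$ in that family.

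The main obstacle lies in the intermediate density range $2n^2/9<e(G)<n^2/3-\eta n^2$, where $G$ is $K_4$-free but is close to neither the Tur\'an graph nor a dense complete bipartite graph---unbalanced tripartite-like structures and blow-ups of small dense $K_4$-free graphs all have to be ruled out. A clean unified treatment will likely require either a flag algebra computation of the extremal cut density, in the spirit of the paper's earlier sections, or a careful local-swap argument starting from the bipartition given by Reiher's theorem, using $K_4$-freeness to exclude the possibility of both sides being simultaneously too dense.
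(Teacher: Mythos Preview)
This statement is stated in the paper as an open \emph{conjecture}, not a theorem: the paper does not prove it in general. The paper only establishes two special cases---regular $K_4$-free graphs (as an immediate consequence of Reiher's sparse-half theorem, since $e(A)=e(A^c)$ for any balanced bipartition of a regular graph) and $3$-partite graphs (by an explicit case analysis on the part sizes). Your proposal is therefore not a reproof of something in the paper but an outline toward an open problem.

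Your sparse regime $e(G)\le 2n^2/9$ is correct and complete: the expectation computation gives $\mathbb{E}[e(A)+e(B)]=e(G)\cdot\frac{n-2}{2(n-1)}<n^2/9$, so some balanced bipartition works. Your verification on $K_{n/3,n/3,n/3}$ is also correct. However, the dense regime is genuinely incomplete, as you yourself acknowledge. Reiher's theorem gives one side with $e(A)\le n^2/18$, but says nothing about $e(A^c)$; outside the regular case this does not close the gap. Stability only covers $e(G)>n^2/3-\eta n^2$, leaving the entire interval $(2n^2/9,\,n^2/3-\eta n^2)$ unaddressed. The suggestions you list for this range---a flag-algebra computation or a local-swap argument from Reiher's set---are not carried out, and there is no indication that either would succeed without substantial new ideas. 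In short, your plan correctly isolates where the difficulty lies, but does not resolve it; the conjecture remains open.
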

If true, Conjecture~\ref{conj:sparsedo} is sharp, because the complete balanced $3$-partite graph satisfies $e(A)+e(B)\geq n^2/9$ for any balanced $2$-partition $V=A\cup B$.

Note that Reiher's result~\cite{ReiherK4} implies Conjecture~\ref{conj:sparsedo} for regular graphs. Further, it is also true for 3-partite graphs.
\begin{prop}
Conjecture~\ref{conj:sparsedo} holds for 3-partite graphs. 
\end{prop}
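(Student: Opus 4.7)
The $3$-partite hypothesis gives $V(G)=V_1\cup V_2\cup V_3$ with each $V_i$ independent; let $n_i=|V_i|$ and order so that $n_1\ge n_2\ge n_3$. Since deleting edges only decreases $e(A)+e(B)$ for any fixed $A,B$, it suffices to prove the bound for the complete tripartite graph $K_{n_1,n_2,n_3}$. Writing $a_i=|A\cap V_i|$, the class-edge count equals
\[
f(a_1,a_2,a_3)=\sum_{i<j}\bigl(a_ia_j+(n_i-a_i)(n_j-a_j)\bigr),
\]
to be minimised over $0\le a_i\le n_i$ with $a_1+a_2+a_3=n/2$. Since $f$ is convex in each $a_i$, its minimum is attained at a vertex of the feasible polytope, so my plan is to exhibit the appropriate vertex in each of two cases.

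\textbf{Case 1: $n_1\ge n/2$.} Let $A$ be any $n/2$ vertices of $V_1$; then $A$ is independent and $B$ consists of $V_2\cup V_3$ together with the remaining $n_1-n/2$ vertices of $V_1$, giving
\[
e(A)+e(B)\le n_2n_3+(n_1-\tfrac{n}{2})(n-n_1).
\]
Using $n_2n_3\le((n-n_1)/2)^2$ and optimising in $n_1\in[n/2,n]$ (the maximum occurs at $n_1=2n/3$) yields the bound $n^2/12<n^2/9$.

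\textbf{Case 2: $n_1<n/2$.} Fix $T\subseteq V_3$ of size $(n_2+n_3-n_1)/2$; this is a nonnegative integer, since $n$ is even and $n_1<n_2+n_3$, and $|T|\le n_3$ follows from $n_2\le n_1+n_3$. Set $A=V_1\cup T$ and $B=V_2\cup(V_3\setminus T)$, so $|A|=|B|=n/2$, and each class meets only two parts of the tripartition. Hence
\[
e(A)+e(B)\le\tfrac{1}{2}\bigl[n_1(n_2+n_3-n_1)+n_2(n_1+n_3-n_2)\bigr].
\]
A short expansion combined with $4n_1n_2\le(n_1+n_2)^2=(n-n_3)^2$ collapses the right-hand side to $(n-n_3)n_3/2$; since $n_3\le n/3$ by the ordering, this is at most $n^2/9$, with equality precisely when $n_1=n_2=n_3=n/3$, matching the extremal graph $K_{n/3,n/3,n/3}$.

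The essential step is spotting the asymmetric partition used in Case 2 (leaving $V_1$ and $V_2$ intact and splitting only the smallest part $V_3$); a symmetric partition obtained by halving each $V_i$, or the partition produced by rounding a random balanced one, would give only the much weaker bound $n^2/6$.
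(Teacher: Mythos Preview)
Your proof is correct and uses exactly the same balanced partition as the paper: in Case~1 take $A\subseteq V_1$, and in Case~2 keep the two largest classes $V_1,V_2$ intact and split only the smallest class $V_3$. Your bookkeeping in Case~2 is in fact cleaner than the paper's---you reduce directly to $e(A)+e(B)\le n_3(n-n_3)/2\le n^2/9$ via $n_3\le n/3$, whereas the paper splits into the subcases $5n/12\le |V_1|\le n/2$ and $|V_1|\le 5n/12$ and computes separately---but the underlying idea is identical.
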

\begin{proof}
Let $G$ be an $n$-vertex $3$-partite graph with classes $V(G)=V_1\cup V_2\cup V_3$ and $|V_1|\geq |V_2|\geq |V_3|$. If $|V_1|\geq n/2$, then consider a balanced 2-partition $V=A\cup B$ such that $A\subseteq V_1$. Now, $e(A)=0$ and $e(B)\leq n^2/12$ by Tur\'an's theorem. Thus $e(A)+e(B)\leq n^2/12$. If $|V_1|\leq n/2$, then consider a balanced 2-partition $V=A\cup B$ such that $V_1\subseteq A\subseteq V_1\cup V_3$ and $V_2\subseteq B\subseteq V_2\cup V_3$. Now,
\begin{align*}
    e(A)\leq |V_1|\left(\frac{n}{2}-|V_1|\right) \quad \text{and} \quad 
        e(B)\leq |V_2|\left(\frac{n}{2}-|V_2|\right).
\end{align*}
Since $n/2\geq |V_1|\geq |V_2|\geq |V_3|$, we have $|V_1|+|V_2|\geq 2n/3$. If $5n/12\leq |V_1|\leq n/2$, then 
\begin{equation*}
    e(A)\leq |V_1|\left(\frac{n}{2}-|V_1|\right)\leq \frac{5n}{12}\cdot \frac{n}{12}=\frac{5n^2}{144} \quad \text{and} \quad  e(B)\leq |V_2|\left(\frac{n}{2}-|V_2|\right) \leq \frac{n^2}{16},
\end{equation*}
thus $e(A)+e(B)< n^2/9$. If $|V_1|\leq 5n/12$, then $|V_2|\geq 2n/3-|V_1|\geq n/4$ and therefore
\begin{align*}
    e(B)\leq |V_2|\left(\frac{n}{2}-|V_2|\right) \leq \left(\frac{2n}{3}-|V_1| \right)\left(\frac{n}{2}-\left(\frac{2n}{3}-|V_1|\right)\right)=-\frac{n^2}{9}-|V_1|^2+\frac{5n}{6}|V_1|,
\end{align*}
because the function $f(x):=x(1/2-x)$ is decreasing for $x\geq1/4$. Now,
\begin{align*}
    e(A)+e(B)&\leq |V_1|\left(\frac{n}{2}-|V_1|\right)-\frac{n^2}{9}-|V_1|^2+\frac{5n}{6}|V_1|=-\frac{n^2}{9}-2|V_1|^2+\frac{4n}{3}|V_1|\\
    &=\frac{n^2}{9}-2\left(|V_1|-\frac{n}{3}\right)^2\leq \frac{n^2}{9},
\end{align*}
completing the proof of this proposition.
\end{proof}

\begin{conj}
\label{K4bal2par}
Let $n$ be an even integer and $G$ be a $K_4$-free graph on $n$ vertices. Then there exists a balanced $2$-partition of $G$ such that each class spans at most $n^2/16$ edges.
\end{conj}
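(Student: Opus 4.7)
The plan is to reduce Conjecture~\ref{K4bal2par} to Mantel's theorem by producing a balanced bipartition $V(G) = A \cup B$ with $|A|=|B|=n/2$ in which both induced subgraphs $G[A]$ and $G[B]$ are triangle-free. Once such a bipartition is in hand, Mantel's theorem gives $e(G[A]), e(G[B]) \leq \lfloor (n/2)^2/4 \rfloor \leq n^2/16$, which is the conjectured bound. The bound is tight: for $G = K_{n/2, n/4, n/4}$ (which is $3$-chromatic and hence $K_4$-free), any balanced bipartition satisfies $\max\{e(G[A]), e(G[B])\} \geq n^2/16$, as realized by taking $A$ to be the large part and $B$ the union of the two small parts, in which case $G[A]$ is empty and $G[B] \cong K_{n/4, n/4}$.

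The central task is therefore the following structural claim: \emph{every $K_4$-free graph on $n$ vertices admits a balanced $2$-partition in which each part induces a triangle-free subgraph.} Equivalently, the $3$-uniform hypergraph of triangles of $G$ admits a balanced proper $2$-coloring. I would attack this via a potential-function argument: start from an arbitrary balanced partition, and while a monochromatic triangle exists, perform a swap of a vertex in a problematic triangle with a vertex on the opposite side that strictly decreases the number of monochromatic triangles. The $K_4$-free hypothesis enters the analysis of obstructions to such a swap, since any two triangles of $G$ share at most an edge, which strongly constrains the local triangle structure around any vertex.

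The main obstacle is that the triangle hypergraph of a $K_4$-free graph need not be $2$-colorable in general, so the structural claim above may fail on some inputs and must be weakened. A fallback plan is to combine Sudakov's theorem~\cite{MR2359832}---which yields a (possibly unbalanced) bipartition $V = X \cup Y$ with $e(X)+e(Y) \leq n^2/9$---with a two-phase local-swap argument: first move $\bigl|\,|X| - n/2\,\bigr|$ carefully chosen vertices across the partition to achieve balance, and then swap vertices between the two sides to equalize $e(X)$ and $e(Y)$. The slack between $n^2/9$ and $2 \cdot n^2/16 = n^2/8$ provides a budget of $n^2/72$ to absorb the cost of both phases. As a third route, one can formulate the problem as a flag algebra optimization in the spirit of \cite{flagmulti,flagbipartite} and the methods used elsewhere in the present paper, searching for a sum-of-squares certificate bounding by $1/16$ the maximum class-density over all $K_4$-free graphs equipped with a balanced bipartition.
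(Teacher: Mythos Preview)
This statement is labeled a \emph{conjecture} in the paper and is not proved there in full generality; the paper establishes only the special case where $G$ is $3$-partite, via a short direct argument. Your proposal, by contrast, aims at the general $K_4$-free case, which remains open.

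For the $3$-partite case the paper's argument is essentially your first approach in a sharper form: if $V(G)=V_1\cup V_2\cup V_3$, one picks $i$ so that $|V_i|+|V_j|\ge n/2$ for both choices of $j\ne i$, and takes $V_j\subseteq A\subseteq V_i\cup V_j$ and $V_k\subseteq B\subseteq V_i\cup V_k$ with $|A|=|B|=n/2$. Both $G[A]$ and $G[B]$ are then bipartite (in particular triangle-free), and $e(A)\le |V_j|\cdot(n/2-|V_j|)\le n^2/16$, similarly for $B$. So your ``both sides triangle-free'' plan succeeds in this restricted setting, and Mantel's theorem is not even needed---the explicit bipartite structure already gives the bound.

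For the general $K_4$-free case, your structural claim---that a balanced bipartition into two triangle-free parts always exists---is, as you yourself acknowledge, not established and quite possibly false; the swap argument you sketch has no mechanism preventing a vertex from lying in many triangles on both sides, so a strictly decreasing potential is not guaranteed by $K_4$-freeness alone. Your fallback via Sudakov's theorem also has a gap: the $n^2/72$ slack is a budget for the \emph{sum} $e(A)+e(B)$, whereas the conjecture bounds the \emph{maximum}; rebalancing a possibly very unbalanced Sudakov cut and then equalizing the two sides are operations that can each cost $\Theta(n^2)$ in the sum, and there is no argument that one can simultaneously keep the sum below $n^2/8$ and drive $|e(A)-e(B)|$ to zero. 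The flag-algebra route is plausible in principle but is not a proof either; the paper leaves both this conjecture and the companion Conjecture~\ref{conj:sparsedo} open.
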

Conjecture~\ref{K4bal2par}, if true, is best possible, because the complete 3-partite graph with class sizes $n/2,n/4$ and $n/4$ has the property that every balanced 2-partition $V(G)=A\cup B$ satisfies \begin{align*}
    \max\{e(A),e(B)\}\geq \frac{n^2}{16}.
\end{align*}

\begin{prop}
Conjecture~\ref{K4bal2par} is true for 3-partite graphs. 
\end{prop}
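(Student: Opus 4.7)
The plan is to construct the required balanced partition explicitly. Write $s_i := |V_i|$ and, after relabeling, assume $s_1 \geq s_2 \geq s_3$. The only structural fact about $3$-partite graphs I use is that, because $s_1$ is the largest and $s_1+s_2+s_3=n$, both $s_2$ and $s_3$ are automatically at most $n/2$: from $s_1 \geq s_2$ and $s_1+s_2 \leq n$ we get $2s_2 \leq n$, and similarly (or from $s_3 \leq s_2$) for $s_3$.

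Pick any subset $U \subseteq V_1$ with $|U|=n/2-s_2$. This is feasible because $s_2 \leq n/2$ ensures $|U| \geq 0$, and $|U| \leq s_1$ rearranges to $s_3 \leq n/2$. Define
\[
A := V_2 \cup U, \qquad B := V_3 \cup (V_1 \setminus U),
\]
so that $|A|=|B|=n/2$ and $V(G)=A\cup B$ is a balanced $2$-partition.

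Since $V_2$ and $U \subseteq V_1$ are each independent, every edge of $G[A]$ has one endpoint in $V_2$ and the other in $U$, giving $e(A) \leq s_2(n/2-s_2)$. By the same reasoning, $|V_1 \setminus U| = s_1 - (n/2-s_2) = n/2 - s_3$, and the only edges inside $B$ run between $V_3$ and $V_1 \setminus U$, so $e(B) \leq s_3(n/2-s_3)$. Because the function $x(n/2-x)$ on $[0,n/2]$ is maximized at $x=n/4$ with value $n^2/16$, both estimates produce exactly the desired bound.

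There is no real obstacle here: the whole argument rests on the single observation that the two smaller class sizes are always at most $n/2$, which is precisely what makes the simple recipe ``on each side of the cut, pair one of the smaller classes with a slice of the largest class'' both feasible and sharp. The only thing worth double-checking is that the construction is tight against the extremal example $(s_1,s_2,s_3)=(n/2,n/4,n/4)$, where the bounds $s_2(n/2-s_2)$ and $s_3(n/2-s_3)$ both evaluate to $n^2/16$.
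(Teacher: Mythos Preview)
Your proof is correct and follows essentially the same approach as the paper: both arguments split the largest class $V_1$ between the two sides so that each side consists of one of the smaller classes together with a piece of $V_1$, and then bound the edges inside each side by $x(n/2-x)\le n^2/16$. The paper phrases the choice of pivot class as ``there exists $i$ with $|V_i|+|V_j|\ge n/2$ for both $j\neq i$'', which is exactly your observation that the two smaller classes have size at most $n/2$.
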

\begin{proof}
Let $G$ be an $n$-vertex $3$-partite graph with classes $V(G)=V_1\cup V_2\cup V_3$. Then there exists $i\in[3]$ such that $|V_i|+|V_j|\geq n/2$ and $|V_i|+|V_k|\geq n/2$ for $j\neq k\in [3]\setminus \{i\}$. Thus, there exists a balanced 2-partition $V(G)=A\cup B$ with $V_j\subseteq A\subseteq V_i\cup V_j$ and $V_k \subseteq B\subseteq V_i\cup V_k$, therefore $e(A)\leq \frac{n^2}{16}$ and $e(B)\leq \frac{n^2}{16}$. 
\end{proof}

\begin{conj}
\label{K4bal3par}
Let $n$ be an integer divisible by 3 and $G$ be a $K_4$-free graph on $n$ vertices. Then there exists a balanced $3$-partition of its vertex set into three classes with at most $(4/81)n^2$ class-edges.
\end{conj}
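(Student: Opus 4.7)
The candidate extremal graph is $G^* = K_{5n/9,\,2n/9,\,2n/9}$, for which $D_3^b(G^*) = 4n^2/81$. Indeed, writing any balanced 3-partition as $A_i = X_i \cup Y_i \cup Z_i$ with $X_i \subseteq V_1$, $Y_i \subseteq V_2$, $Z_i \subseteq V_3$, the identity
\[
e(A_i) = \tfrac{1}{2}\bigl((n/3)^2 - X_i^2 - Y_i^2 - Z_i^2\bigr)
\]
reduces the minimization of $\sum_i e(A_i)$ to maximizing $\sum_i (X_i^2 + Y_i^2 + Z_i^2)$ over the transportation polytope with row sums $n/3$ and column sums $5n/9,\,2n/9,\,2n/9$. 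Enumerating the vertices of this polytope gives minimum $4n^2/81$. A comparable check across other complete tripartite $K_4$-free graphs indicates that $(5/9, 2/9, 2/9)$ is the unique tight triple up to permutation, supporting the conjectured bound.

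The plan is to follow the flag-algebra-plus-stability scheme the authors use for Theorems~\ref{theo:balancedcut} and~\ref{theo:baltri}. First, I would encode a balanced 3-partition via Naves's cut-encoding trick~\cite{navesgrwc}: attach one of three colors to each vertex of the $K_4$-free host graph, require balanced color-class densities, and sum the three monochromatic edge densities. Solving the resulting semidefinite program should yield the asymptotic bound $D_3^b(G) \leq (4/81 + o(1))n^2$, together with a stability statement pinpointing $G^*$ as the unique optimal density profile. Next, I would extract a stability lemma: every $K_4$-free graph $G$ with $D_3^b(G) \geq (4/81 - \varepsilon)n^2$ lies within $o(n^2)$ edges of $G^*$.

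To upgrade the asymptotic bound to the exact inequality, start from a tripartition of $V(G)$ close to the $G^*$-partition delivered by stability, and iteratively swap vertices between parts to decrease $\sum_i e(A_i)$. The polytope analysis above implies that every swap leaving us outside the extremal configuration strictly lowers the class-edge count, so either we hit the bound with strict inequality or $G \cong G^*$. As a useful preprocessing observation, any triangle-free input is already covered by Theorem~\ref{theo:baltri} via the stronger inequality $n^2/36 < 4n^2/81$, so the real content of the conjecture concerns $K_4$-free graphs containing a triangle.

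The main obstacle is expected to be the flag algebra step. Because the asymmetric extremal part sizes $(5/9, 2/9, 2/9)$ are unlikely to produce a dual SDP certificate with a clean closed form, a high-depth computation with certified rational rounding will probably be required. A secondary difficulty is that non-tripartite $K_4$-free graphs with triangles, for example balanced blow-ups of small 4-chromatic $K_4$-free graphs such as the Moser spindle, may create stubborn local densities that push the raw SDP bound above $4/81$; in principle the flag algebra framework subsumes them, but in practice additional explicit substructure constraints may be needed to land on the tight value.
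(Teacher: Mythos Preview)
This statement is a \emph{conjecture} in the paper, not a theorem: the authors do not prove it in general. The only result they establish toward it is the proposition immediately following, which verifies the conjecture for 3-partite graphs. That proof is very short and entirely elementary: take an independent set $I$ of size $n/3$ (one color class has size at least $n/3$), apply the 3-partite case of Conjecture~\ref{conj:sparsedo} to $G[V\setminus I]$ to get a balanced 2-partition $A\cup B$ with $e(A)+e(B)\le (2n/3)^2/9 = 4n^2/81$, and output $I\cup A\cup B$. No flag algebras, no stability, no SDP.

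Your proposal, by contrast, is a research program rather than a proof. The central step --- running the cut-encoded flag algebra SDP for $K_4$-free graphs and obtaining the value $4/81$ --- is asserted but not carried out, and you yourself flag it as the main obstacle, noting that non-tripartite $K_4$-free inputs ``may create stubborn local densities that push the raw SDP bound above $4/81$.'' Until that computation is actually performed and certified, nothing has been proved. The subsequent stability and swapping arguments are likewise contingent on the SDP producing a tight bound with usable dual information. In short: the paper does not claim a proof of the full conjecture, and your write-up does not supply one either --- it is a plausible outline whose decisive step remains open.
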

Conjecture~\ref{K4bal3par}, if true, is best possible, because the complete 3-partite graph with class sizes $5n/9,2n/9$ and $2n/9$ achieves equality. 
\begin{prop}
Conjecture~\ref{K4bal3par} is true for 3-partite graphs. 
\end{prop}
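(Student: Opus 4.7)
The plan is a three-case analysis on the parts of $V(G)$. Normalize $|V_i|=x_i n$ with $x=x_1\ge y=x_2\ge z=x_3$ and $x+y+z=1$, so $x\ge 1/3$ and $z\le 1/3$. Because $G$ is 3-partite, each class $A_i$ of any partition satisfies $e(A_i)\le \sum_{j<k}|A_i\cap V_j|\cdot|A_i\cap V_k|$, so it suffices to bound this slice-product sum for an explicit partition in each regime.

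If $x\ge 2/3$, put $A_1, A_2\subseteq V_1$ (each of size $n/3$) and let $A_3$ absorb $V_2\cup V_3$ together with the remaining $(x-2/3)n$ vertices of $V_1$; only $A_3$ has class-edges, and using $yz\le(1-x)^2/4$ the bound $e(A_3)/n^2\le (x-2/3)(1-x)+(1-x)^2/4$ is maximized at $x=7/9$ with value $1/27<4/81$. If instead $y\ge 1/3$, then $V_2$ can host a class: put $A_2\subseteq V_2$ and $A_3\subseteq V_1$, let $A_1$ be the rest, and observe $e(A_1)/n^2\le (x-1/3)(2/3-x)+(y-1/3)z$; a routine optimization (splitting at $x=4/9$, using $y\le\min(x,1-x)$) again gives $\le 1/27<4/81$.

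The delicate case is $x<2/3$ and $y<1/3$, where the extremal graph $K_{5n/9,2n/9,2n/9}$ lives. Here I would compare two partitions and take the better one. \emph{Partition (B):} $A_1\subseteq V_1$, $A_2=V_2\cup W_2$, $A_3=V_3\cup W_3$ with $W_2,W_3\subseteq V_1$ of sizes $(1/3-y)n$ and $(1/3-z)n$, yielding $e_B=[y(1/3-y)+z(1/3-z)]n^2$. \emph{Partition (E):} $A_3\subseteq V_1$, $A_1=V_2\cup W$ with $W\subseteq V_3$ of size $(1/3-y)n$ (feasible since $x<2/3$ forces $s:=y+z\ge 1/3$, hence $(1/3-y)\le z$), and $A_2$ absorbs the remaining $V_1,V_3$; this gives $e_E=[y(1/3-y)+(2/3-s)(s-1/3)]n^2$. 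For fixed $s$ both bounds are concave in $y$ and decreasing on the admissible window $y\in[s/2,1/3]$, so $\min(e_B,e_E)/n^2$ is maximized at the symmetric point $y=z=s/2$, where it reduces to $\min\bigl(s/3-s^2/2,\ 7s/6-5s^2/4-2/9\bigr)$. The two quadratics coincide exactly at the roots of $27s^2-30s+8=0$, namely $s=4/9$ and $s=2/3$; on each of $[1/3,4/9]$ and $[4/9,2/3]$ the smaller of them is monotone and attains its maximum $4/81$ at $s=4/9$, matching the extremal $K_{5n/9,2n/9,2n/9}$.

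The main obstacle is this last case: each of $e_B,e_E$ individually exceeds $4n^2/81$ on a non-trivial sub-range of $s$, so a single-strategy argument cannot work; the essential point is that their ``bad'' ranges are disjoint, which is exactly what the quadratic $27s^2-30s+8$ pins down, and it simultaneously forces the extremal configuration to sit at the crossover $s=4/9$.
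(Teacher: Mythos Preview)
Your argument is correct, and it takes a genuinely different route from the paper's proof. The paper proceeds by a one-line reduction: since a $3$-partite graph on $n$ vertices has an independent set $I$ of size $n/3$, one applies the already-established fact that Conjecture~\ref{conj:sparsedo} holds for $3$-partite graphs to $G[V\setminus I]$, obtaining a balanced $2$-partition $V\setminus I=A\cup B$ with $e(A)+e(B)\le (2n/3)^2/9=4n^2/81$; then $I\cup A\cup B$ is the desired balanced $3$-partition.

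Your approach is instead a direct, self-contained case analysis on the class-size vector $(x,y,z)$, with two explicit partition constructions in the critical regime $x<2/3,\ y<1/3$ and a min-of-two-bounds argument. The calculations all check: both $e_B$ and $e_E$ are decreasing in $y$ on $[s/2,1/3)$ (derivatives $2s-4y$ and $1/3-2y$, negative since $y\ge s/2>1/6$), so the worst case is indeed $y=z=s/2$; the crossover quadratic $27s^2-30s+8$ factors correctly, and on $[1/3,4/9]$ the minimum is $g_2$ (increasing there since $4/9<7/15$) while on $[4/9,2/3]$ it is $g_1$ (decreasing), so the global maximum of $\min(g_1,g_2)$ is $4/81$ at $s=4/9$. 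The advantage of the paper's proof is brevity and the clean recursion ``balanced $k$-partition $=$ independent $n/k$-set $+$ balanced $(k{-}1)$-partition of the rest''; the advantage of yours is that it is independent of the earlier proposition and makes the extremal configuration $K_{5n/9,2n/9,2n/9}$ (at $s=4/9$) emerge explicitly from the crossover, rather than being inherited from the $2$-partition result.
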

\begin{proof}
Any 3-partite graph $G$ with vertex set $V$ contains an independent set $I$ of size $n/3$. The graph $G[V\setminus I]$ is 3-partite and since Conjecture~\ref{conj:sparsedo} holds for 3-partite graphs, there exists a balanced 2-partition $V\setminus I=A\cup B$ such that
\begin{align*}
    e(A)+e(B)\leq \frac{\left(\frac{2n}{3}\right)^2}{9}=\frac{4}{81}n^2.
\end{align*}
Hence, $V=I\cup A\cup B$ is a balanced 3-partition with at most $(4/81)n^2$ class-edges. 
\end{proof}

\section{Balanced 2-partition: proof of Theorem~\ref{theo:balancedcut}}
\label{secmT1}
We start our proof by observing that Razborov's~\cite{flagsRaz} result on the Erd\H{o}s' Sparse Half Conjecture implies Theorem~\ref{theo:balancedcut} for regular graphs. 
\begin{theo}[Razborov~\cite{flagsRaz}]
\label{Razprogr}
Every triangle-free graph on $n$ vertices contains a vertex set of size $\lfloor\frac{n}{2}\rfloor$ that spans at most $\frac{27}{1024}n^2$ edges.
\end{theo}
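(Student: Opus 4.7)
The plan is to follow Razborov's flag algebra method, specialized to the problem of locating a sparse half-sized subset in a triangle-free graph. First I would work in the augmented theory of $2$-colored triangle-free graphs: a model is a triangle-free graph together with a partition of its vertices into classes of two colors, $1$ and $2$. Inside the associated flag algebra, the two key invariant functionals are the color-$1$ density $d_1$ (normalized count of color-$1$ vertices) and the monochromatic edge density $d_{11}$ (normalized count of color-$1$--color-$1$ edges). Given a triangle-free $G$ on $n$ vertices and a subset $A\subseteq V(G)$ of size $\lfloor n/2\rfloor$, coloring $A$ with $1$ and $V(G)\setminus A$ with $2$ yields a sequence of colored models with $d_1\to 1/2$ and $d_{11}\to 2e(G[A])/n^2$. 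Passing to a convergent subsequence, the theorem reduces to the flag algebra inequality: in every limit point with $d_1=1/2$, one has $d_{11}\le 27/512$, which unwinds to $e(G[A])\le (27/1024)n^2$.

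Next I would set up the semidefinite bound. Fix a small flag size $k$ and, for each type $\tau$ of bounded size, enumerate all colored triangle-free $k$-flags rooted at $\tau$. The matrix whose entries are the unlabeled averages of pairwise products of such flags is positive semidefinite at every limit point. A suitable nonnegative combination of (i) such Cauchy--Schwarz PSD matrices, (ii) chain-rule identities expressing $k$-vertex densities as positive combinations of $(k+1)$-vertex densities, and (iii) the linear constraint $d_1=1/2$, together with the triangle-freeness axioms, should produce a certificate of the inequality $27/512-d_{11}\ge 0$ in the flag algebra quotient. Finding the right coefficients is a semidefinite program whose variables are the conic weights and whose rows come from the flag expansions.

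The main obstacle, and the essence of Razborov's contribution, is solving that SDP with enough accuracy and symmetry exploitation that it can be rationalized into a rigorous proof. This entails choosing $k$ large enough for feasibility at the exact value $27/512$, using the color-swap involution and flag-type automorphisms to block-diagonalize the program down to a tractable size, and finally rounding the floating-point solution to exact rationals while preserving positive semidefiniteness of each block and still hitting $27/512$ on the nose. Once such a rational certificate exists, substituting it back into the flag algebra axioms exhibits the bound as a finite linear combination of manifestly nonnegative quantities, completing the proof. A subsidiary difficulty is identifying a candidate extremal structure (some triangle-free graph whose densities saturate the bound) to confirm that no smaller value than $27/512$ is attainable by this particular SDP relaxation, so that the effort spent rationalizing is not wasted on a non-tight certificate.
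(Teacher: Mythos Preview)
The paper does not prove this theorem; it is quoted as Razborov's result and used as a black box (in the discussion before Lemma~\ref{lem:almostreg} and inside that lemma). So there is no ``paper's own proof'' to compare against.

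That said, your proposal has a genuine conceptual gap independent of this. You phrase the reduction as: for an \emph{arbitrary} subset $A$ of size $\lfloor n/2\rfloor$, color $A$ with $1$ and prove the flag-algebra inequality $d_{11}\le 27/512$ in every limit with $d_1=1/2$. But that would show that \emph{every} half-sized subset spans at most $\tfrac{27}{1024}n^2$ edges, which is false. In $K_{n/4,\,3n/4}$, taking $A$ to be the small side together with $n/4$ vertices of the large side gives $e(G[A])=(n/4)^2=n^2/16>\tfrac{27}{1024}n^2$. The theorem is an \emph{existential} statement about some half, and a plain flag-algebra SDP over $2$-colored models with the single constraint $d_1=1/2$ cannot distinguish the best half from the worst.

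The way such existential bounds are actually obtained---both in Razborov's proof and in the flag-algebra arguments elsewhere in this paper (see e.g.\ the proof of Lemma~\ref{flaglemmacuts})---is to \emph{construct} candidate subsets from the local structure (neighborhoods of vertices or edges, possibly padded randomly to the right size), express $e(A)$ as a polynomial in labeled flags, and then bound the minimum over these specific constructions. The SDP then certifies an upper bound on that minimum, not on an arbitrary coloring. Your write-up is missing exactly this mechanism for encoding the existential quantifier.
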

 Theorem~\ref{Razprogr} implies Theorem~\ref{theo:balancedcut} for regular graphs: Take a vertex set $A$ of size $n/2$ with $e(A)\leq (27/1024)n^2$. For regular graphs,  $e(A^c)=e(A)$. Thus, 
 \begin{align*}
     e(A)+e(A^c)\leq \frac{54}{1024}n^2 <  \frac{1}{16}n^2,
 \end{align*}
 implying Theorem~\ref{theo:balancedcut} for regular graphs. Next we prove Theorem~\ref{theo:balancedcut} for graphs which are almost regular with density about $1/3$. This is a subcase for which the main part of our proof fails. We handle it separately first. 
\begin{lemma}
\label{lem:almostreg}
Let $n$ be an even integer and $G$ be an $n$-vertex triangle-free graph such that 
\begin{align}
\label{flaglemmacutsup}
    \sum_{v\in V(G)}\left(\deg(v)-\frac{n}{3}\right)^2\leq 10^{-4}n^3. 
    \end{align}
Then $D_2^b(G)< \frac{1}{16}n^2$.
\end{lemma}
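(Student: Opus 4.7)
\textbf{Proof plan for Lemma~\ref{lem:almostreg}.}

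The plan is to extend the already-noted fact that Theorem~\ref{Razprogr} (Razborov) implies Theorem~\ref{theo:balancedcut} for $(n/3)$-regular graphs, by showing that the small-variance hypothesis \eqref{flaglemmacutsup} makes $G$ ``regular enough'' for that argument to still go through with room to spare. The key identity, valid for any balanced partition $V(G) = A \cup A^c$ with $|A| = |A^c| = n/2$, is
\[
e(A^c) - e(A) = \tfrac{1}{2}\Big( \sum_{v \in A^c} \deg(v) - \sum_{v \in A} \deg(v) \Big),
\]
which follows from comparing $2e(A) + e(A,A^c) = \sum_{v \in A} \deg(v)$ with the analogous identity for $A^c$.

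First I would control the right-hand side by the degree deviations. Writing $\deg(v) = n/3 + \varepsilon(v)$ and using $|A| = |A^c|$, the difference of the two degree sums collapses to $\sum_{v \in A^c} \varepsilon(v) - \sum_{v \in A} \varepsilon(v)$, which is at most $\sum_v |\varepsilon(v)|$ in absolute value. Applying Cauchy--Schwarz together with hypothesis \eqref{flaglemmacutsup} yields
\[
\sum_{v \in V(G)} |\deg(v) - n/3| \;\le\; \sqrt{n}\cdot\sqrt{\sum_{v} (\deg(v)-n/3)^2} \;\le\; \sqrt{n\cdot 10^{-4}n^3} \;=\; 10^{-2}n^2.
\]
Consequently $|e(A) - e(A^c)| \le 5\cdot 10^{-3} n^2$ for \emph{every} balanced $2$-partition.

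Next I would apply Theorem~\ref{Razprogr} to obtain a vertex set $A \subseteq V(G)$ with $|A| = n/2$ and $e(A) \le \tfrac{27}{1024}n^2$. Combining with the bound above,
\[
D_2^b(G) \;\le\; e(A) + e(A^c) \;\le\; 2 e(A) + 5\cdot 10^{-3} n^2 \;\le\; \tfrac{54}{1024}n^2 + 5 \cdot 10^{-3} n^2 \;<\; \tfrac{1}{16} n^2,
\]
since $\tfrac{1}{16} - \tfrac{54}{1024} = \tfrac{10}{1024} > 5\cdot 10^{-3}$.

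There is no genuine obstacle here; the lemma is essentially a robustness statement for Razborov's bound under near-regularity, and the constant $10^{-4}$ in \eqref{flaglemmacutsup} is chosen with exactly this slack in mind. The only thing to check carefully is the degree-sum identity and the Cauchy--Schwarz step; the rest is arithmetic.
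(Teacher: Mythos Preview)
Your proposal is correct and follows essentially the same approach as the paper: use the degree-sum identity $2e(S)+e(S,S^c)=\sum_{v\in S}\deg(v)$ to express $e(A^c)-e(A)$ as half the difference of degree sums, control that difference via Cauchy--Schwarz and the variance hypothesis, and then plug in Razborov's sparse-half bound. The only cosmetic difference is that the paper applies Cauchy--Schwarz separately to $S$ and $S^c$ and combines via the triangle inequality (yielding the slightly weaker constant $\sqrt{\varepsilon/2}\approx 7.07\times 10^{-3}$), whereas you bound the difference directly by $\sum_v|\varepsilon(v)|$ and apply Cauchy--Schwarz once over all of $V$, giving $5\times 10^{-3}$; both constants fit comfortably under the slack $10/1024$.
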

\begin{proof}
Let $\varepsilon=10^{-4}$ and $G$ be an $n$-vertex triangle-free graph such that \eqref{flaglemmacutsup} holds. For $S\subseteq V(G)$ with $|S|=n/2$ denote by $\bar{d}_S$ the average degree of the vertices in $S$, i.e.
\begin{align*}
    \bar{d}_S=\frac{2}{n}\sum_{x\in S}\deg(x).
\end{align*}
For a vertex set $S\subseteq V(G)$ of size $n/2$ we have
\begin{align}
\label{degsumcountd}
    2e(S)+e(S,S^c)=\sum_{x\in S}\deg(x)=\frac{n}{2}\bar{d}_S.
\end{align}
By the Cauchy-Schwarz inequality and the condition of this lemma, 
\begin{align*}
    \left(\frac{n}{2}\left(\bar{d}_S-\frac{n}{3}\right) \right)^2=\left(\sum_{x\in S}\left(\deg(x)-\frac{n}{3}\right) \right)^2\leq \frac{n}{2}\sum_{x\in S}\left(\deg(x)-\frac{n}{3}\right)^2\leq \frac{\varepsilon}{2} n^4.
\end{align*}
Thus $\left(\bar{d}_S-\frac{n}{3}\right)^2\leq 2\varepsilon n^2$
which implies $\left|\bar{d}_S-\frac{n}{3}\right|\leq \sqrt{2\varepsilon} n$. Since $S$ was an arbitrary set of size $n/2$, we also have
$\left|\bar{d}_{S^c}-\frac{n}{3}\right|\leq \sqrt{2\varepsilon} n$. Therefore, using the triangle inequality, we conclude  $\left|\bar{d}_{S}-\bar{d}_{S^c}\right|\leq 2\sqrt{2\varepsilon} n$. Now, by \eqref{degsumcountd} we have
\begin{align*}
    2e(S)+e(S,S^c)=\frac{n}{2}\bar{d}_{S}\leq \frac{n}{2}\left(\bar{d}_{S^c}+2\sqrt{2\varepsilon}n\right)=2e(S^c)+e(S,S^c)+\sqrt{2\varepsilon}n^2.
\end{align*}
This implies
\begin{align}
\label{edgesinhalf}
    e(S)\leq e(S^c)+\sqrt{\frac{\varepsilon}{2}}n^2.
\end{align}
By Theorem~\ref{Razprogr} there exists a vertex set $X\subseteq V(G)$ of size $n/2$ with $e(X)\leq \frac{27}{1024}n^2$. Applying \eqref{edgesinhalf} for $S=X^c$, we get
\begin{align*}
    D_2^b(G)\leq e(X)+e(X^c)\leq 2e(X)+\sqrt{\frac{\varepsilon}{2}}n^2\leq \frac{54}{1024}n^2+\sqrt{\frac{\varepsilon}{2}}n^2\leq \frac{n^2}{16},
\end{align*}
completing the proof of Lemma~\ref{lem:almostreg}.
\end{proof}

Next we prove Theorem~\ref{theo:balancedcut} for graphs with independence number at least $n/2$. 
Recall Mantel's theorem stats that every $n$-vertex triangle-free graph has at most $n^2/4$ edges, equality is achieved by $K_{\frac{n}2,\frac{n}2}$ for even $n$.
\begin{lemma}
\label{lem: ind}
Let $n$ be an even integer and $G$ be an $n$-vertex triangle-free graph satisfying $\alpha(G)\geq \frac{n}{2}$. Then $D_2^b(G)\leq \frac{n^2}{16}$.
Additionally, if $D_2^b(G)=\frac{n^2}{16}$, then $G\cong K_{\frac{3n}{4},\frac{n}{4}}$. 
\end{lemma}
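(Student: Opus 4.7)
The plan is to exploit a maximum independent set of $G$ and apply Mantel's theorem to its complement. Let $I$ be a maximum independent set, so $|I|\ge n/2$ by hypothesis. Fix $A\subseteq I$ with $|A|=n/2$ and set $B=V(G)\setminus A$. Then $e(A)=0$, and Mantel's theorem applied to the triangle-free graph $G[B]$ on $n/2$ vertices gives $e(B)\le (n/2)^2/4=n^2/16$. This yields $D_2^b(G)\le n^2/16$.

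For the extremal characterization, suppose $D_2^b(G)=n^2/16$. Equality in Mantel forces $G[B]\cong K_{n/4,n/4}$ with bipartition $B=B_1\cup B_2$ (in particular $4\mid n$). For any $a\in A$, the set $N(a)\cap B$ is independent in $G[B]$ because $G$ is triangle-free, and every independent set in $K_{n/4,n/4}$ lies in a single side. Partition $A=A_0\cup A_1\cup A_2$ accordingly, where $A_i$ for $i\in\{1,2\}$ collects those $a\in A$ with $N(a)\cap B\subseteq B_i$, and $A_0$ collects those with no neighbor in $B$. Then both $A_0\cup A_1\cup B_2$ and $A_0\cup A_2\cup B_1$ are independent sets in $G$.

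I would then split on $\alpha(G)$. If $\alpha(G)=n/2$, the independent-set bound gives $|A_0|+|A_1|\le n/4$ and $|A_0|+|A_2|\le n/4$, which combined with $|A_0|+|A_1|+|A_2|=n/2$ forces $A_0=\emptyset$ and $|A_1|=|A_2|=n/4$. But then $(A_1\cup B_2,\,A_2\cup B_1)$ is a balanced $2$-partition with both parts independent, hence zero class-edges, contradicting $D_2^b(G)=n^2/16>0$. So in the equality case $\alpha(G)>n/2$, and $I\setminus A\subseteq B$ is nonempty; since $I\cap B$ is independent inside $G[B]=K_{n/4,n/4}$, it is contained in a single part, which we may take to be $B_1$.

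The final step is a swap argument. For any $a\in A$ and any $v\in I\cap B_1$, the set $A'=(A\setminus\{a\})\cup\{v\}\subseteq I$ is independent, so $e(A')=0$; extremality then forces $e(B')=n^2/16$ for $B'=(B\setminus\{v\})\cup\{a\}$, and hence $G[B']\cong K_{n/4,n/4}$. Since $B_1\setminus\{v\}$ has $n/4-1$ vertices and is completely joined to $B_2$ in $G$, the only balanced bipartition of $B'$ must place $a$ on the same side as $B_1\setminus\{v\}$. This forces $a$ adjacent to every vertex of $B_2$ and non-adjacent to every vertex of $B_1\setminus\{v\}$; since $a,v\in I$ implies $a\not\sim v$, in fact $a$ is non-adjacent to all of $B_1$. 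Ranging over all $a\in A$ shows $A\cup B_1$ is an independent set of size $3n/4$, completely joined to $B_2$ of size $n/4$, so $G\cong K_{3n/4,n/4}$. The main obstacle is this last step: pinning down the \emph{unique} bipartition of $G[B']$ via the size imbalance $(n/4-1,n/4)$ of $B_1\setminus\{v\}$ and $B_2$, which is what translates the extremality of the swapped partition into clean adjacency information.
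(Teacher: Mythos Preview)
Your proof is correct. Both the inequality and the extremal characterization go through as written; in particular, the uniqueness of the bipartition of $G[B']\cong K_{n/4,n/4}$ (for $n\ge 4$) combined with $|B_2|=n/4$ does force $B_2$ to be one full side and hence $a$ to sit with $B_1\setminus\{v\}$, exactly as you argued.

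Your route, however, differs from the paper's. The paper never looks at a \emph{maximum} independent set or splits on the value of $\alpha(G)$. Instead, after obtaining $G[B]\cong K_{n/4,n/4}$ with parts $B_1,B_2$, it splits $A$ into two halves $A_1,A_2$ of size $n/4$ so that (without loss of generality) $e(A_1,B_1)=0$, and then applies the extremality hypothesis to the two balanced partitions $(A_1\cup B_1,\,A_2\cup B_2)$ and $(A_1\cup B_2,\,A_2\cup B_1)$. Each application, together with Mantel, forces one more complete bipartite block, and the pieces assemble into $K_{3n/4,n/4}$. This avoids the case analysis and the single-vertex swap entirely.

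What each approach buys: the paper's argument is more uniform---three well-chosen balanced partitions suffice, with no appeal to $\alpha(G)$ beyond the hypothesis $\alpha(G)\ge n/2$. Your argument is a bit longer because of the case split, but the swap step is conceptually clean: it reads off the adjacencies of each $a\in A$ one vertex at a time, rather than inferring them from block-level equalities. Either way, the same Mantel-equality mechanism is doing the real work.
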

\begin{proof}
Let $I$ be an independent set of size $n/2$. By Mantel's theorem,
\begin{equation}
    e(I)+e(V\setminus I)=e(V\setminus I)\leq \frac{|V\setminus I|^2}{4}=\frac{n^2}{16},
\end{equation}
proving the first part of this lemma. Let $G$ be an $n$-vertex triangle-free graph satisfying $\alpha(G)\geq \frac{n}{2}$ and $D_2^b(G)=\frac{n^2}{16}$. We can assume that $4$ divides $n$, as otherwise $n^2/16$ is not an integer. 
Let $A$ be an independent set of size $n/2$ and set $B:=V\setminus A$. We have
\begin{align*}
    e(B)=e(A)+e(B)\geq D_2^b(G)=\frac{n^2}{16},
\end{align*}
and thus, by Mantel's Theorem, $B$ spans a complete balanced bipartite graph, i.e. $B=B_1\cup B_2$ with $e(B_1)=e(B_2)=0$ and $e(B_1,B_2)=\frac{n^2}{16}$. Since $G$ is triangle-free, no vertex in $A$ can have neighbors in both $B_1$ and $B_2$. Therefore, without loss of generality, there exists a partition $A=A_1\cup A_2$ such that $|A_1|=|A_2|=n/4$ and $e(A_1,B_1)=0$. We have 
\begin{align*}
    e(A_2,B_2)=e(A_1\cup B_1)+e(A_2\cup B_2)\geq D_2^b(G)=\frac{n^2}{16},
\end{align*}
and thus $A_2\cup B_2$ spans a complete bipartite graph by Mantel's theorem. Since for $u\in B_2$ and $v\in B_1\cup A_2$, we have $uv\in E(G)$ and because $G$ is triangle-free, the set $B_1 \cup A_2$ is independent. We conclude
\begin{align*}
    e(A_1,B_2)=e(A_1\cup B_2)+e(A_2\cup B_1)\geq D_2^b(G)=\frac{n^2}{16},
\end{align*}
and thus $A_1\cup B_2$ also spans a complete bipartite graph by Mantel's theorem. We conclude that $G$ is a complete bipartite graph with classes $A_1\cup A_2\cup B_1$ and $B_2$, i.e. $G\cong K_{\frac{3n}{4},\frac{n}{4}}$.
\end{proof}

\begin{lemma}
\label{flaglemmacuts}
There exists $n_0$ such that for all $n\geq n_0$ the following holds. Let $G$ be an $n$-vertex triangle-free graph with $\alpha(G)\leq n/2$ and 
\begin{align}
\label{formula:flagalmoregular}
    \sum_{v\in V(G)}\left(\deg(v)-\frac{n}{3}\right)^2\geq 10^{-4}n^3, \quad
\end{align}
then $D_2^b(G)< \frac{1}{16}n^2$.
\end{lemma}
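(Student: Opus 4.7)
The plan is to apply the flag algebra method adapted to cuts in triangle-free graphs, as in \cite{flagmulti,flagbipartite} and as originally suggested by Naves. I work in the flag algebra of triangle-free graphs with an additional vertex $2$-colouring, where colour $1$ encodes membership in side $A$ of a balanced $2$-partition and colour $2$ encodes membership in side $B$. Under this encoding the balancedness condition becomes that the density of each colour equals $1/2$, and the quantity $(e(A)+e(B))/\binom{n}{2}$ becomes asymptotically the density of monochromatic edges. Consequently, the target $D_2^b(G) < n^2/16$ translates to the existence of a balanced $2$-colouring of $V(G)$ whose monochromatic-edge density is strictly less than $1/8$.

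The heart of the argument is a semidefinite-programming flag algebra certificate of the form
\[
 \bigl(\text{mono-edge density of an optimal balanced $2$-colouring}\bigr) \;\le\; \tfrac{1}{8} \;-\; c_1\cdot \sigma(G) \;+\; o(1),
\]
where $c_1>0$ is an absolute constant extracted from the certificate and
\[
\sigma(G) \;=\; \frac{1}{n^3}\sum_{v\in V(G)}\Bigl(\deg(v)-\tfrac{n}{3}\Bigr)^2.
\]
Since $\sigma(G)$ is a polynomial in the densities of the edge, non-edge, and cherry ($P_3$) subgraphs, it is a legitimate flag algebra expression and may therefore participate in the certificate. The independence hypothesis $\alpha(G)\le n/2$ enters the SDP through linear constraints that exclude large monochromatic independent sets, ruling out the extremal $K_{3n/4,n/4}$ and its near-extremal perturbations. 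Once the certificate is in hand, substituting $\sigma(G)\ge 10^{-4}$ from \eqref{formula:flagalmoregular} gives
\[
\frac{D_2^b(G)}{\binom{n}{2}} \;\le\; \tfrac{1}{8} \;-\; c_1\cdot 10^{-4} \;+\; o(1),
\]
which for $n\ge n_0$ yields the strict inequality $D_2^b(G)<n^2/16$.

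The main obstacle is producing and rigorously verifying the certificate with a strictly positive coefficient $c_1$ in front of the variance term, rather than merely recovering the weaker non-strict bound $\tfrac{1}{8}$. This requires a careful choice of flag types, a properly formulated SDP encoding both the balancedness condition and the constraint $\alpha(G)\le n/2$, and a rounding of the numerical solution to rational arithmetic so that the certificate is exact. The matrices and the accompanying data will be posted at \oururl, following the computer-assisted framework of \cite{flagbipartite}; once the certificate is established, the deduction of Lemma~\ref{flaglemmacuts} from it is immediate.
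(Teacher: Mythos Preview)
Your proposal has a directional gap that cannot be repaired within the framework you describe. In the flag algebra of $2$-coloured triangle-free graphs, any SDP certificate yields inequalities valid for \emph{every} balanced $2$-colouring in the limit; but an inequality ``mono-edge density $\le \tfrac18 - c_1\sigma(G)$'' holding for all balanced colourings is simply false (e.g.\ in $K_{n/2,n/2}$, colouring half of each side with each colour gives mono-edge density near $\tfrac14$). Upper-bounding $D_2^b(G)$, a \emph{minimum} over colourings, requires exhibiting a concrete partition or a distribution over partitions whose expected cost you can control; the phrase ``optimal balanced $2$-colouring'' is not an object the $2$-coloured flag algebra can access, and you give no construction. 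A second problem: $\alpha(G)\le n/2$ constrains independent sets of $G$, not monochromatic sets of a colouring, so your proposed encoding is off; its actual role (via triangle-freeness) is to give $\Delta(G)\le n/2$, a bona fide rooted constraint on the degree of a labelled vertex.

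The paper supplies exactly the missing construction. It argues by contradiction in the \emph{uncoloured} theory: for each vertex $v$ it averages over the balanced partitions $A\cup A^c$ with $N(v)\subseteq A$, expresses the expected $e(A)+e(A^c)$ as a polynomial in $1$-labelled flags rooted at $v$, and from the hypothesis $D_2^b(G)\ge n^2/16$ extracts a rooted inequality valid at every vertex. Combining this with the degree constraint $\Delta\le n/2$, a computer-assisted flag calculation forces $\sigma(G)\le 10^{-7}$, contradicting~\eqref{formula:flagalmoregular}. The neighbourhood-based family of candidate cuts is the key idea your outline lacks.
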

Note that Lemma~\ref{lem:almostreg} together with Lemma~\ref{flaglemmacuts} implies Theorem~\ref{theo:balancedcut}. We prove Lemma~\ref{flaglemmacuts} using the method of flag algebras in the next subsection.

\subsection{Introduction to flag algebras}
%\FC{To Do for Bernard: Can you proofread this section?}

%Razborov invented the method of flag algebra~\cite{flagsRaz} to derive bounds for parameters in extremal combinatorics with the help of a computer. This method has been applied in various settings such as graphs~\cite{FAgraphs,FAgraphs2}, hypergraphs~\cite{BaberTalbot,MR2988862}, edge-coloured graphs~\cite{FAcolor,FAColor2},  oriented graphs~\cite{FATrounaments,FAdigraphs}, permutations~\cite{PAPerm1,PAPerm2}, discrete geometry~\cite{FAGeom2}, or phylogenetic trees~\cite{FAPhylogenetic}.
%For more explanation, we recommend looking at the survey~\cite{MR3186665} by Razborov. 

Razborov invented the method of flag algebra~\cite{flagsRaz} to derive bounds for parameters in extremal combinatorics with assistance by computers on optimization. Flag algebras have been applied in various settings such as graphs, hypergraphs, edge-coloured graphs, oriented graphs, permutations and discrete geometry.

Standard applications of the flag algebra method to graphs provide bounds on densities of induced subgraphs. To get bounds, inequalities and equalities involving the densities of induced subgraphs are combined with the help of semidefinite programming. This step is computer-assisted. We give a short introduction to the flag algebra notation in the setting of graphs.

Given two graphs $H$ and $G$, we denote by $d(H,G)$ the density of $H$ in $G$, i.e., the probability that a uniformly at random chosen $|V(H)|$-subset in $V(G)$ induces a copy of $H$. If $G$ is clear from context, then, abusing notation, we drop $G$ and just depict $H$ in place of $d(H,G)$. For example, let $G$ be a graph on $n$ vertices, then
\begin{align*}
\Fuuu222
\end{align*}
denotes triangle density in $G$. Every inequality or equality we write in this notation holds with an error term $o(1)$ as $n \to \infty$, which we omit writting. 
Let $k$ be an integer and $\phi$ be an injective function $[k] \rightarrow V(G)$. In other words, $\phi$ labels $k$ distinct vertices in $G$. The pair $(G, \phi)$ is called a \emph{$k$-labeled graph}. 

Let $(H, \phi')$ and $(G, \phi)$ be two $k$-labeled graphs. Let $X$ be a uniformly at random chosen $(|V(H)|-k)$-subset of $V(G) \setminus Im(\phi)$. By $d((H,\phi'),(G,\phi))$ we denote the probability that the $k$-labeled subgraph of $G$ induced by $X$ and the $k$ labeled vertices, i.e., $(G[X \cup Im(\phi)], \phi)$, is isomorphic to $(H, \phi')$, where the isomorphism maps $\phi(i)$ to $\phi'(i)$ for $i \in [k]$. Again, if $(G,\phi)$ is clear from context, then, abusing notation, we drop it and just depict $(H,\phi')$, where the labeled vertices are marked as squares. For example, let $G$ be a graph on $n$ vertices with vertices $u$ and $v$ labeled with $1$ and $2$, respectively. Then, 
\begin{align*}
\Fllu222=\frac{|N(u)\cap N(v)|}{n-2}
\end{align*}
denotes the density of the common neighborhood of $u$ and $v$ in $G$. 

The linear \emph{averaging operator}
$\left\llbracket . \right\rrbracket$ denotes the average (or expectation) over all possible labelings.
%Let $G$ be a graph and 
Let $(H,\phi)$ be a $k$-labeled graph. 
We define $\left\llbracket (H,\phi)  \right\rrbracket = \alpha H$,
where $\alpha$ is the probability that a random injective map $\phi'$
from $[k]$ to $V(H)$ gives $(H,\phi')$ isomorphic to $(H,\phi)$. For example
\[
\left\llbracket\, \Flluu211212 \,\right\rrbracket  = \frac{2}{6}\, \Fuuuu211212.
\]
Extending $\left\llbracket . \right\rrbracket$ to linear combinations of labeled graphs is done by linearity.
The important feature of $\left\llbracket . \right\rrbracket$ is that if $X$ is a linear combination of labeled flags satisfying $X \geq 0$, then $\left\llbracket X  \right\rrbracket \geq 0$.

\subsection{Proof of Lemma~\ref{flaglemmacuts}}
Suppose $G$ is a triangle-free graph satisfying \eqref{formula:flagalmoregular}
and $D_2^b(G) \geq \frac{1}{16}n^2$. Since $G$ is triangle-free, we have
   \begin{align}
\label{triangle-free}
\Fuuu222 = 0.
\end{align}
Further, Lemma~\ref{lem: ind} implies $\Delta\leq n/2$, hence we have 
\begin{align}
\label{form: max degree}
\Flu2 \leq \frac{1}{2}.
\end{align}

Next, we find a balanced 2-partition based on the adjacencies of an arbitrary, but fixed vertex $v\in V(G)$. Denote by $\mathcal{A}\subset \mathcal{P}(n)$ the set of all sets $A$ of size $n/2$ containing $N(v)$.

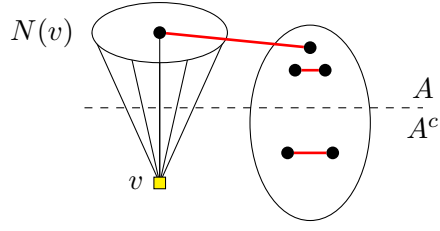
\begin{figure}
\begin{center}
    \begin{tikzpicture}
    \draw
    (0,0) node[labeled_vertex,label=left:$v$](v){}
    (-1,2) node[left]{$N(v)$}
    \foreach \x in {-2,-1,...,2.1}{
    (v)--(0.45*\x,2)
    }
    ;
    \draw[fill=white]
    (0,2) ellipse (0.9cm and 0.4cm) (2,0.8) ellipse (0.8cm and 1.3cm)
    ;
    \draw[dashed]
    (-1,1) -- (3.5,1) node[above]{$A$} node[below]{$A^c$}
    ;
    \draw 
    (0,2)node[vtx](a){}
    (2,1.8)node[vtx](b){}
    (1.8,1.5)node[vtx](c){}
    (2.2,1.5)node[vtx](d){}
    (1.7,0.4)node[vtx](e){}
    (2.3,0.4)node[vtx](f){}
    (v)--(a)
    ;
    \draw[red,line width=1pt]
    (a)--(b) (c)--(d) (e)--(f)
    ;
    \end{tikzpicture}
\end{center}
    \caption{Possible edges inside 2-partition from Lemma~\ref{flaglemmacuts}.}
    \label{fig:vcut}
\end{figure}

\begin{comment}
where a vertex from $N(v)^c$ is added uniformly at random to $A$ with probability 
\begin{align*}
p=\frac{\frac{n}{2}-\deg(v)}{n-\deg(v)}
\end{align*}
such that in expectation $A$ has size $n/2$. Note that because of Lemma~\ref{lem: ind}, we have $p\geq 0$. With high probability $|A|=n/2+o(n)$.
\end{comment}
For every $A\in\mathcal{A}$, the 2-partition $A\cup A^c$ is balanced and therefore we have $e(A)+e(A^c)\geq n^2/16$ by assumption, see Figure~\ref{fig:vcut} for one such partition $A\cup A^c$. If $A\in \mathcal{A}$ were chosen uniformly at random, then a vertex in $N(v)^c$ is in $A$ with probability $\frac{\frac{n}{2}-\deg(x)}{n-\deg(x)}$. 
Since $N(v)$ is triangle-free, there are two kinds of edges in $A$. The first kind has one vertex in $N(v)$ and the other in $N(v)^c$. The other kind has both vertices in $N(v)^c$. These two kinds of edges are counted by 3-vertex flags in \eqref{eq:cut}. Notice the fraction multiplying the 3-vertex flags in \eqref{eq:cut} corresponds to $\frac{\frac{n}{2}-\deg(x)}{n-\deg(x)}$. Similar set-up appears in other proofs in this paper.
By averaging over all $A\in\mathcal{A}$, we have
\begin{align}
\frac{1}{\binom{n-1}{2}|\mathcal{A}|}\sum_{A\in\mathcal{A}}e(A)=\Fluu212 \frac{\left( \frac{1}{2} - \Flu2  \right)}{\Flu1} + \Fluu112 \left(\frac{\left( \frac{1}{2} - \Flu2  \right)}{\Flu1}\right)^2\label{eq:cut}
\end{align}
and
\[
\frac{1}{\binom{n-1}{2}|\mathcal{A}|}\sum_{A\in\mathcal{A}}e(A^c)= \Fluu112 \left(\frac{ \frac{1}{2}}{\Flu1}\right)^2 \quad.
\]
Since 
\[
\frac{1}{\binom{n-1}{2}|\mathcal{A}|}\sum_{A\in \mathcal{A}}e(A)+e(A^c)\geq \frac{n^2}{16\binom{n-1}{2}}=\frac{1}{8}+o(1),
\]
we get
\begin{align}
\label{ineq: flagineq1/8}
\frac{1}{8} \leq \Fluu212 \frac{\left( \frac{1}{2} - \Flu2  \right)}{\Flu1} + \Fluu112 \left(\frac{ \frac{1}{4}+\left( \frac{1}{2} - \Flu2  \right)^2}{\left( \Flu1 \right)^2}\right).
\end{align}
Inequality \eqref{ineq: flagineq1/8} can be rewritten as 
%\begin{align}
%\label{form: cut2}
%\frac{1}{8}\left( 1 - 2\Flu2  +\Fluu221 \right) \leq \Fluu212 \left( \frac{1}{2} - \frac{3}{2}\Flu2 +\Fluu221  \right) + \Fluu112 \left(\frac{1}{2}-\Flu2+\Fluu221 \right). 
%\end{align}
\begin{align}
\label{form: cut2}
0 \leq \Fluu212 \left( \frac{1}{2} - \Flu2\right)\Flu1  + \Fluu112 \left(\frac{1}{2}-\Flu2+\Fluu221 \right)
-
\frac{1}{8}\left( \Flu1 \right)^2 
. 
\end{align}
\begin{comment}
Multiplying with 8, we get
\[
\label{cut1}
\left( 1 - 2\Flu2  +\Fluu221 \right) \leq \Fluu212 \left( 4 - 12\Flu2 +8\Fluu221  \right) + \Fluu112 \left(4-8\Flu2+8\Fluu221 \right) \]
\end{comment}
%
%\FC{Details need to be added.}
%Applying the method of flag algebras using \eqref{triangle-free}, \eqref{form: max degree}, \eqref{form: cut1} and \eqref{form: cut2} gives the result of this Lemma. 
%
The condition \eqref{formula:flagalmoregular} stated in flag algebra notation is
\begin{align*}
\left\llbracket
    \left(\Flu2 - \frac{1}{3} \right)^2
    \right\rrbracket \geq 10^{-4}
           \quad \quad \text{implying} \quad \quad \left\llbracket
    \Fluu221 - \frac{2}{3} \Flu2+ \frac{1}{9} 
    \right\rrbracket \geq 10^{-4}.
\end{align*}
Applying the unlabeling operator gives
\begin{align}
\label{form: cut1}
     \frac{1}{3}\Fuuu221 - \frac{2}{3} \Fuu2+ \frac{1}{9} \geq 10^{-4}, \quad \text{because} \quad \left\llbracket  \Fluu221 \right\rrbracket= \frac{1}{3}\Fuuu221.
\end{align}
On the other hand, using \eqref{form: max degree} and \eqref{form: cut2}, computer assisted flag algebras calculation gives
\[
\frac{1}{3}\Fuuu221 - \frac{2}{3} \Fuu2+ \frac{1}{9} \leq 
10^{-7},
\]
contradicting \eqref{form: cut1}.
The files needed to perform the corresponding calculations are available at \oururl. 

\section{Local densities in triangle-free graphs: proof of Theorem~\ref{theo: krivextension}}
\label{sec:KrivextensionK3}
The following Lemma due to Erd\H{o}s, Faudree, Rousseau and Schelp~\cite{MR1273598} is helpful.
\begin{lemma}[Erd\H{o}s, Faudree, Rousseau and Schelp~\cite{MR1273598}]
\label{lem: indseterdos}
Let $\alpha$ be fixed, $0.5\leq\alpha\leq 1$ and $\beta>(2\alpha-1)/4$. Further, let $G$ be an $n$-vertex graph satisfying $\alpha(G)\geq (1-\alpha)n$. If $n$ is sufficiently large and each $\lfloor \alpha n\rfloor$-subset of $V(G)$ spans at least $\beta n^2$ edges, then $G$ contains a triangle.
\end{lemma}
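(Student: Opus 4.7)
The plan is to prove the contrapositive: given an $n$-vertex triangle-free graph $G$ with $\alpha(G)\geq (1-\alpha)n$, we exhibit an $\lfloor\alpha n\rfloor$-subset $A$ with $e(A)\leq \frac{2\alpha-1}{4}n^2 + o(n^2)$, which for $n$ large and any fixed $\beta>(2\alpha-1)/4$ yields $e(A)<\beta n^2$ and contradicts the density hypothesis.

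First I would fix a maximum independent set $I$ in $G$ with $k:=|I|\geq (1-\alpha)n$, and set $J=V(G)\setminus I$. If $k\geq \lfloor\alpha n\rfloor$, then any $\lfloor\alpha n\rfloor$-subset of $I$ works with $e(A)=0$. So assume the nontrivial case $(1-\alpha)n\leq k<\lfloor\alpha n\rfloor$, and let $A=I\cup T$ for some $T\subseteq J$ with $|T|=\lfloor\alpha n\rfloor-k$. Writing $X=e(G[J])$, $Y=e(I,J)$ and $r=|T|/|J|$, averaging over a uniformly random $T$ gives
\[
\mathbb{E}[e(A)] = Y\cdot r + X\cdot r^2 + o(n^2) = r\bigl(e(G) - X(1-r)\bigr) + o(n^2).
\]
Since $r\leq (2\alpha-1)/\alpha <1$ and $e(G)\leq n^2/4$ by Mantel's theorem, this is bounded by $\frac{2\alpha-1}{4\alpha}\,n^2 - rX(1-r) + o(n^2)$. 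The averaging bound is exactly $(2\alpha-1)n^2/4$ in the extremal configuration $G=K_{n/2,n/2}$ (where $X=0$ and $Y=n^2/4$), so the slack $rX(1-r)$ is precisely what must compensate for the $1/\alpha$ factor in the general case.

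The second step is to show that whenever the averaging bound is loose, the triangle-free structure forces $X$ to be large enough to compensate. For $k>(1-\alpha)n$ or $e(G)$ bounded away from $n^2/4$, the degree bound $\Delta(G)\leq \alpha(G)=k$ (which holds in triangle-free graphs since neighborhoods are independent) combined with Mantel already yields the desired $e(A)\leq \frac{2\alpha-1}{4}n^2+o(n^2)$ by direct computation. In the residual regime $k=(1-\alpha)n$ with $e(G)$ within $o(n^2)$ of $n^2/4$, I would apply the Erdős-Simonovits stability theorem to conclude that $G$ is $o(n^2)$-close to a balanced complete bipartite graph $K_{n/2,n/2}$. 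Then $I$ essentially lies inside one side of the near-bipartition, and I would pick $T$ deliberately from the corresponding side of $J$ that sends few edges to $I$ (as happens in the extremal example), obtaining the matching bound via the same direct calculation as in $K_{n/2,n/2}$.

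The main obstacle is seamlessly patching the averaging argument and the stability argument in the intermediate window, where $e(G)$ is only moderately close to $n^2/4$: neither regime is strictly tight there, so the $o(n^2)$ error terms from stability and from the degree-bound estimates must be tracked carefully and shown to remain negligible uniformly on the fixed interval $\alpha\in[1/2,1]$. A secondary, more routine, difficulty is handling the floor/ceiling rounding in $\lfloor\alpha n\rfloor$ and $\lceil (1-\alpha)n\rceil$, which only contributes lower-order terms.
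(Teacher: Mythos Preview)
The paper does not prove this lemma; it is quoted from Erd\H{o}s, Faudree, Rousseau and Schelp and used as a black box, so there is no paper proof to compare against.

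Your plan has a genuine gap in Step 2. First, your ``residual regime'' ($k=(1-\alpha)n$ with $e(G)=n^2/4-o(n^2)$) is actually empty: summing degrees over $J=V\setminus I$ gives $2X+Y\le k(n-k)$, hence $e(G)=X+Y\le k(n-k)=(1-\alpha)\alpha n^2$, which is bounded strictly below $n^2/4$ for every fixed $\alpha>1/2$. So stability is never reached, and everything must come from your ``direct computation'' branch. But that branch does not deliver the bound. Take $\alpha=0.6$ and let $G$ be the balanced blow-up of $C_5$ with parts $V_1,\dots,V_5$. Here $k=\alpha(G)=2n/5=(1-\alpha)n$ exactly and $e(G)=n^2/5$ is well below $n^2/4$, so you are squarely in the ``direct computation'' case. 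With $I=V_1\cup V_3$ one has $r=1/3$, $X=e(J)=n^2/25$, $Y=e(I,J)=4n^2/25$, and your averaging gives
\[
\mathbb{E}[e(A)]=rY+r^2X=\tfrac{13}{225}\,n^2\approx 0.0578\,n^2,
\]
strictly above the target $(2\alpha-1)n^2/4=0.05\,n^2$. Invoking $\Delta(G)\le k$ only yields the cruder bound $(m-k)k=0.08\,n^2$, and Mantel on $G$ gives $r\cdot e(G)\approx 0.0667\,n^2$; neither helps. So a uniformly random $T$ plus the patches you list cannot close the gap.

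What is missing is that $T$ must be chosen using the structure inside $J$, not uniformly. In the $C_5$ example, taking $T=V_5$ (an entire part non-adjacent to both parts of $I$) gives $e(A)=n^2/25=0.04\,n^2$, which works. The original EFRS argument, and likewise the paper's own proof of Lemma~\ref{unbalanindep}, brings in a second independent set $B\subseteq J$ (a largest one inside $J$) and routes the construction through $B$; that is the idea your plan is lacking.
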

We start by proving Theorem~\ref{theo: krivextension} for a finite set of values of $\alpha$. Our proof follows Krivelevich's proof~\cite{MR1320169} for the range $\alpha\geq 0.6$. The improvement stems from a flag algebra application.  
\begin{lemma}
\label{lem: krivextendisc}
Let $i\in\{0,1,\ldots,21\}$, $\alpha_i=0.579+0.0005i$ and $\beta_i=(2\alpha_i-1)/4-\frac{1}{4000}$. Further, let $G_i$ be an $n$-vertex graph satisfying $\alpha(G)< (1-\alpha_i)n$. If $n$ is sufficiently large and every $\lfloor \alpha n\rfloor$-subset of $V(G)$ spans at least $\beta_i n^2$ edges, then $G$ contains a triangle.
\end{lemma}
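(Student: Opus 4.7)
The plan is to imitate the template of Lemma~\ref{flaglemmacuts}: translate the hypotheses of the lemma into flag-algebra identities and derive a numerical contradiction via a computer-assisted semidefinite-programming (SDP) computation, carried out separately for each of the 22 discrete values of $i$. The improvement over Krivelevich's hand-calculation range $\alpha\geq 0.6$ should come from combining the local density inequality below with many additional $4$- and $5$-vertex flag identities in the SDP rather than with a single quadratic estimate.

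I would begin by assuming for contradiction that $G$ is triangle-free, satisfies $\alpha(G)<(1-\alpha_i)n$, and that every $\lfloor \alpha_i n\rfloor$-subset of $V(G)$ spans at least $\beta_i n^2$ edges. Since $N(v)$ is independent in any triangle-free graph, $\Delta(G)\leq \alpha(G)<(1-\alpha_i)n$, which in flag-algebra notation reads
\[
\Fuuu222 = 0 \qquad\text{and}\qquad \Flu2 \leq 1-\alpha_i+o(1).
\]
Next, fix a vertex $v$ and label it $1$. Because $\deg(v)<(1-\alpha_i)n<\alpha_i n$ (using $\alpha_i\geq 0.579>\tfrac{1}{2}$), the family $\mathcal{A}_v$ of $\lfloor\alpha_i n\rfloor$-subsets $A\subseteq V(G)\setminus\{v\}$ with $N(v)\subseteq A$ is nonempty, and each such $A$ satisfies $e(A)\geq \beta_i n^2$. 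Since $N(v)$ carries no edges,
\[
e(A) = e\bigl(N(v),\,A\setminus N(v)\bigr) + e\bigl(A\setminus N(v)\bigr).
\]
Averaging over a uniformly random $A\in\mathcal{A}_v$ and writing $q=(\alpha_i-\Flu2)/\Flu1$ for the probability that an unlabeled non-neighbour of $v$ lies in $A$, the computation parallels \eqref{eq:cut} and yields
\[
\Fluu212\,q + \Fluu112\,q^2 \;\geq\; 2\beta_i + o(1),
\]
which, after clearing denominators and applying the averaging operator $\left\llbracket\,\cdot\,\right\rrbracket$, becomes the unlabeled polynomial inequality
\[
\left\llbracket \Fluu212\,(\alpha_i-\Flu2)\,\Flu1 + \Fluu112\,(\alpha_i-\Flu2)^2 - 2\beta_i\,(\Flu1)^2 \right\rrbracket \;\geq\; 0.
\]

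This inequality, together with $\Fuuu222=0$, the max-degree bound $\Flu2\leq 1-\alpha_i$, and the standard consistency relations for $5$-vertex flag densities, would be fed into a flag-algebra SDP and solved numerically once for each $i\in\{0,1,\ldots,21\}$; each solve is expected to produce an infeasibility certificate analogous to \eqref{form: cut1}, with the solver input and verification scripts posted at \oururl. The principal obstacle is numerical precision: the slack $\tfrac{1}{4000}$ subtracted in $\beta_i$ is small, so the SDP at each $\alpha_i$ must be tight enough, and the floating-point solution must be rounded to a rigorously verifiable rational certificate; choosing the right family of labeled flags and square multipliers at each $\alpha_i$ is the bulk of the work. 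A secondary bookkeeping concern, addressed outside this lemma via the standard argument that extending a $\lfloor\alpha_i n\rfloor$-subset by $(\alpha-\alpha_i)n$ vertices adds at most $(\alpha-\alpha_i)n^2$ edges, is to check that the discretization step $0.0005$ combined with the slack $\tfrac{1}{4000}$ propagates the discrete lemma to all $\alpha\in[\alpha_i,\alpha_{i+1})$ and so, together with Krivelevich's range, delivers Theorem~\ref{theo: krivextension}.
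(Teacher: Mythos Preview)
Your overall framework matches the paper's: assume a counterexample, record triangle-freeness and the maximum-degree bound in flag notation, derive polynomial inequalities from the local-density hypothesis by averaging over constrained families of $\lfloor\alpha_i n\rfloor$-sets, and hand the system to an SDP for each $i$. The vertex-rooted inequality you write down is exactly the paper's \eqref{cutvertcutkriv}.

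What you are missing is a \emph{second} application of the hypothesis, and this is the likely reason the paper reaches $0.579$ with only $1/4000$ slack. Fix an edge $uv$ (so $N(u)$ and $N(v)$ are disjoint independent sets) and average the hypothesis over the $\lfloor\alpha_i n\rfloor$-subsets $A$ with $N(u)\subseteq A$ and $N(v)\cap A=\emptyset$. In two-labeled flag language this gives
\[
0 \;\leq\; \Flluu212112\,\bigl(\alpha_i-\Fllu221\bigr)\,\Fllu211 \;+\; \Flluu211112\,\bigl(\alpha_i-\Fllu221\bigr)^{2} \;-\; 2\beta_i\,\bigl(\Fllu211\bigr)^{2},
\]
which is the paper's \eqref{cutedgecutkriv}. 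This is not a ``standard consistency relation'' among $5$-vertex densities; it is a genuinely new consequence of the local-density assumption, independent of the vertex-rooted one. The paper's SDP certifies infeasibility of \eqref{flagcond1}, \eqref{cutvertcutkriv} \emph{and} \eqref{cutedgecutkriv} simultaneously. With only the vertex-rooted inequality your SDP may well stall near Krivelevich's threshold rather than at $0.579$; in any case the paper's posted certificates will not validate against your reduced system, so you would have to redo the twenty-two runs and there is no a~priori reason to expect them to close.
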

\begin{proof}
Let $i\in\{0,1,\ldots,21\}$ and $G_i$ be a triangle-free $n$-vertex graph satisfying $\alpha(G)< 1-\alpha_i$ and every $\lfloor \alpha n\rfloor$-subset of $V(G)$ spans at least $\beta_i n^2$ edges.
We have 
\begin{align}
    \label{flagcond1}
    \Fuuu222 = 0 \quad \quad \text{and} \quad \quad \Flu2 \leq 1-\alpha_i.
\end{align}
By averaging over all $\lfloor \alpha n\rfloor$-subsets of $V(G)$ containing $N(u)$ for a fixed vertex $u$, we have
\begin{align*}
    2\beta_i\leq \frac{1}{\binom{n-1}{2}\binom{n-1}{\lfloor \alpha n\rfloor}}\sum_{\substack{A\subseteq V(G_i)\\ N(u)\subseteq A \\|A|=\lfloor \alpha n\rfloor}}e(A)= \Fluu212 \left(\frac{\alpha_i-\Flu2}{\Flu1}\right) +\Fluu112 \left(\frac{\alpha_i-\Flu2}{\Flu1}\right)^2,
\end{align*}
implying
\begin{align}
\label{cutvertcutkriv}
   0 \leq \Fluu212 \left(\alpha_i-\Flu2\right)\Flu1 +\Fluu112 \left(\alpha_i-\Flu2\right)^2 -2\beta_i\left(\Flu1\right)^2.
\end{align}
By averaging over all $\lfloor \alpha n\rfloor$-subsets of $V(G)$ containing $N(u)$ and avoiding $N(v)$ for a fixed edge $uv$, we have
\begin{align*}
    2\beta_i\leq \frac{1}{\binom{n-2}{2}\binom{n-2}{\lfloor \alpha n\rfloor}}\sum_{\substack{A\subseteq V(G_i)\\ N(u)\subseteq A \\ N(v) \cap A = \emptyset  \\|A|=\lfloor \alpha n\rfloor}}e(A)= \Flluu212112 \left(\frac{\alpha_i-\Fllu221}{\Fllu211}\right) +\Flluu211112 \left(\frac{\alpha_i-\Fllu221}{\Fllu211}\right)^2,
\end{align*}
implying
\begin{align}
    \label{cutedgecutkriv}
    0\leq  \Flluu212112 \left(\alpha_i-\Fllu221\right)\Fllu211 +\Flluu211112 \left(\alpha_i-\Fllu221\right)^2
    -2\beta_i\left(\Fllu211\right)^2,
\end{align}
Applying the method of flag algebras shows there is no sufficiently large graph satisfying all three \eqref{flagcond1}, \eqref{cutvertcutkriv} and \eqref{cutedgecutkriv} at the same time, giving a contradiction. This part of the proof is computer assisted. The files needed to perform the corresponding calculations are available at \oururl. 
\end{proof}

\begin{proof}[Proof of Theorem~\ref{theo: krivextension}]
Let $0.579\leq \alpha<0.6$ and $\alpha_i:=0.579+0.0005i$, $\beta_i:=(2\alpha-1)/4-\frac{1}{4000}$ for $i\in \{0,1,\ldots,42\}$. There exists $j\in\{0,1,\ldots,41\}$ such that $\alpha_j\leq \alpha\leq \alpha_{j+1}$. Let $G$ be a triangle-free $n$-vertex graph. If $\alpha(G)\geq 1-\alpha$ then, by Lemma~\ref{lem: indseterdos}, there exists a $\lfloor \alpha n\rfloor$-subset of $V(G)$ spanning at most $\beta n^2$ edges, where $\beta=(2\alpha-1)/4$. If $\alpha(G)\leq 1-\alpha\leq 1-\alpha_j$, then by Lemma~\ref{lem: krivextendisc} there exists $A\subset V(G)$ of size $|A|=\lfloor \alpha_j n\rfloor$ spanning at most $\beta_j n^2$ edges. An arbitrary set $B\supseteq A$ of size $\lfloor \alpha n\rfloor$ spans at most
\begin{align*}
e(B)&=
    e(A)+e(B\setminus A)+e(A,B)\leq \beta_j n^2+\sum_{x\in B\setminus A}\deg(x) \leq \beta_j n^2+ \left(\lfloor \alpha n\rfloor-\lfloor \alpha_j n\rfloor\right)(1-\alpha)n \\
    &\leq \beta_j n^2+\frac{n^2}{4000}\leq \frac{2\alpha-1}{4}n^2.
\end{align*}
\end{proof}

\section{Balanced 3-partition: proof of Theorem~\ref{theo:baltri}}
\label{sec:baltri}
We start proving Theorem~\ref{theo:baltri} for graphs with large independence number.
\begin{lemma}
\label{lem:triplargeind}
There exists $n_0$ such that for every $n\geq n_0$ divisible by three, the following holds. Let $G$ be a triangle-free graph with $\alpha(G)\geq n/3$. Then 
\begin{align*}
    D_3^b(G)\leq \frac{n^2}{36}.
\end{align*}
Further, if $D_3^b(G)=\frac{n^2}{36}$, then $G\cong K_{\frac{5n}{6},\frac{n}{6}}$ or $G\cong K_{\frac{n}{2},\frac{n}{2}}$. 
\end{lemma}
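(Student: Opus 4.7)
The argument proceeds in two parts: establishing the upper bound $D_3^b(G)\leq n^2/36$, and then characterizing the equality case. Both parts are driven by Theorem~\ref{theo:balancedcut} applied to the subgraph $G[V\setminus I]$ for a suitable independent set $I$.

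For the upper bound, I would take an independent set $I$ of size $n/3$ (which exists by hypothesis) and apply Theorem~\ref{theo:balancedcut} to the triangle-free graph $G[V\setminus I]$ on $2n/3$ vertices. This yields a balanced $2$-partition $V\setminus I=A_2\cup A_3$ with $e(A_2)+e(A_3)\leq (2n/3)^2/16=n^2/36$, so the balanced $3$-partition $(I,A_2,A_3)$ has total class-edges at most $n^2/36$.

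For the equality case, suppose $D_3^b(G)=n^2/36$. Then for \emph{any} independent set $I$ of size $n/3$, the inequality above must be tight: $D_2^b(G[V\setminus I])=n^2/36$, and the uniqueness clause of Theorem~\ref{theo:balancedcut} forces $G[V\setminus I]\cong K_{n/2,n/6}$ with parts $B_1$ (size $n/2$) and $B_2$ (size $n/6$). Triangle-freeness then implies each $v\in I$ has $N(v)\subseteq B_1$ or $N(v)\subseteq B_2$, so $I=I_1\cup I_2\cup I_0$ accordingly (the last class being the vertices of $I$ isolated in $G$). The sets $X:=B_1\cup I_2\cup I_0$ and $Y:=B_2\cup I_1$ are both independent, and $G$ is bipartite with bipartition $\{X,Y\}$, of sizes $|X|=5n/6-|I_1|$ and $|Y|=n/6+|I_1|$.

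The key idea is now to re-apply the same argument with $I$ replaced by an arbitrary independent $n/3$-subset $W\subseteq X$ (such $W$ exists since $|X|\geq n/2>n/3$). This gives $G[V\setminus W]\cong K_{n/2,n/6}$, and reading off its unique bipartition $\{X\setminus W,Y\}$ yields $\{|X|-n/3,\,|Y|\}=\{n/2,n/6\}$, which collapses to $|I_1|\in\{0,n/3\}$.

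When $|I_1|=0$, completeness of the bipartite graph between $X\setminus W$ and $Y=B_2$ forces every vertex of $X\setminus W$ to be joined to all of $B_2$. Since $W$ may avoid any chosen $x\in X$, every $x\in X$ satisfies $B_2\subseteq N(x)$; this rules out $I_0$ and forces $N(v)=B_2$ for each $v\in I_2$. Combined with $I_1=\emptyset$, $G$ becomes complete bipartite between $B_2$ and $B_1\cup I$, so $G\cong K_{5n/6,n/6}$. When $|I_1|=n/3$, the mirror argument with $W\subseteq B_1$ forces every $B_1$-$I_1$ pair to be an edge, giving $G\cong K_{n/2,n/2}$. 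The main subtlety, and the only genuinely nontrivial step, is the second application of Theorem~\ref{theo:balancedcut} with a \emph{different} independent set $W$: applied to $I$ alone it identifies $G[V\setminus I]$, but only by varying $W$ can one pin down the edges between $I$ and $V\setminus I$ and isolate the two extremal graphs.
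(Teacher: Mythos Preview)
Your proof is correct and follows the same strategy as the paper: obtain the upper bound by applying Theorem~\ref{theo:balancedcut} to $G[V\setminus I]$ for an independent set $I$ of size $n/3$, and in the equality case apply Theorem~\ref{theo:balancedcut} a second time with a different independent $n/3$-set to force the global structure. The only organisational difference is that you first deduce that $G$ is bipartite with parts $X,Y$ and then let $W$ range over $n/3$-subsets of $X$, using connectedness of $K_{n/2,n/6}$ to read off $\{|X|-n/3,|Y|\}=\{n/2,n/6\}$; the paper instead fixes the second set as a subset $V_3'$ of the large side $B_1$ and argues directly from the fact that $V_2\cup V_3''$ remains complete bipartite, leading to the same two cases.
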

\begin{proof}
Let $I$ be an independent set of size $n/3$. By applying Theorem~\ref{theo:balancedcut} to $G[V\setminus I]$, there exists a balanced 2-partition $V\setminus I=A\cup B$ satisfying
\begin{align*}
     e(A)+e(B)\leq \frac{\left(\frac{2n}{3}\right)^2}{16}\leq \frac{n^2}{36}.
\end{align*}
Now, $V(G)=I\cup A\cup B$ is a balanced partition with at most $n^2/36$ class-edges, proving the first part of this lemma. For the second part, assume that $G$ is an $n$-vertex triangle-free graph satisfying $D_3^b(G)=n^2/36$ and $\alpha(G)\geq n/3$. If 6 does not divide $n$, then $n^2/36$ is not an integer. Therefore, $n$ is divisible by 6. Let $V_1$ be an independent set of size $n/3$. Now 
\begin{align*}
    D_2^b(G[V\setminus V_1])=e(V_1)+ D_2^b(G[V\setminus V_1]) \geq D_3^b(G)\geq \frac{n^2}{36},
\end{align*}
and therefore by applying Theorem~\ref{theo:balancedcut} to $G[V\setminus V_1]$, we have $G[V\setminus V_1]\cong K_{\frac{n}{6},\frac{n}{2}}$. Hence, there exists a partition, $V= V_1 \cup V_2\cup V_3$ such that $|V_1|=n/3$, $|V_2|=n/6$, $|V_3|=n/2$, the sets $V_1,V_2$ and $V_3$ are independent and $V_2\cup V_3$ spans a complete bipartite graph. We split up $V_3=V_3' \cup V_3''$ into arbitrary sets of size $|V_3'|=n/3$ and $|V_3''|=n/6$. Now 
\begin{align*}
    D_2^b(G[V\setminus V_3'])=e(V_3')+ D_2^b(G[V\setminus V_3']) \geq D_3^b(G)\geq \frac{n^2}{36},
\end{align*}
and therefore by applying Theorem~\ref{theo:balancedcut} to $G[V\setminus V_3']$, we have $G[V\setminus V_3']\cong K_{\frac{n}{6},\frac{n}{2}}$. Let $A\cup B$ with $|A|=n/6$ and $|B|=n/2$ be the classes of the induced copy of $K_{\frac{n}{6},\frac{n}{2}}$ in $G[V\setminus V_3']$.  Since $V_2\cup V_3''$ spans a complete bipartite graph we have either $A=V_3''$ and $B=V_1\cup V_2$ or $A=V_2$ and $B=V_1\cup V_3''$.

\underline{Case 1: $A=V_3''$ and $B=V_1\cup V_2$:}\\
We will observe that $G\cong K_{\frac{n}{2},\frac{n}{2}}$ with classes $U_1=V_3$ and $U_2=V_1\cup V_2$. The set $U_2$ is independent because every vertex from $V_3''$ is adjacent to every vertex from $U_2$ and $G$ is triangle-free. Therefore, $G$ is a bipartite graph with classes $U_1$ and $U_2$. Towards contradiction, assume that there exists a pair $u_1u_2\notin E(G)$ with $u_1\in U_1$ and $u_2\in U_2$. Now, take a balanced 3-partition $W_1\cup W_2 \cup W_3$ with $W_1\subset U_1\setminus \{u_1\}$ and $W_2\subset U_2\setminus \{u_2\}$. We have $e(W_1)=e(W_2)=0$ and $e(W_3)<n^2/36$, because $u_1u_2\notin E(G)$. Hence,
\begin{align*}
    \frac{n^2}{36}> e(W_1)+e(W_2)+e(W_3)\geq D_b^3(G)\geq \frac{n^2}{36},
\end{align*}
a contradiction. We conclude that $G\cong K_{\frac{n}{2},\frac{n}{2}}$. 

\underline{Case 2: $A=V_2$ and $B=V_1\cup V_3''$:}\\
In this case $G\cong K_{\frac{5n}{6},\frac{n}{6}}$ with classes $U_1= V_1 \cup V_3' \cup V_3''$ and $U_2= V_2$ because every vertex from $V_2$ is adjacent to every vertex from $V_1 \cup V_3' \cup V_3''$ and $G$ is triangle-free. \end{proof}
Next, we prove Theorem~\ref{theo:baltri} for graphs with small independence number.
\begin{lemma}
\label{lem:tripsmallind}
There exists $n_0$ such that for every $n\geq n_0$ divisible by three, the following holds. Let $G$ be a triangle-free graph with $\alpha(G)\leq n/3$. Then there exists $\varepsilon>0$ such that
\begin{align*}
    D_3^b(G)< \left(\frac{1}{36}-\varepsilon\right) n^2.
\end{align*}
\end{lemma}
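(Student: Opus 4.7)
The plan is to mirror the flag algebra argument of Lemma~\ref{flaglemmacuts}, but now for balanced $3$-partitions. Suppose for contradiction that no such $\varepsilon>0$ exists: for every $\delta>0$ and all sufficiently large $n$ divisible by $3$ there is a triangle-free graph $G$ with $\alpha(G)\leq n/3$ and $D_3^b(G)\geq (1/36-\delta)n^2$. Triangle-freeness yields $\Fuuu222=0$, and since every neighborhood is independent, the hypothesis $\alpha(G)\leq n/3$ forces $\Delta(G)\leq n/3$, i.e.\ $\Flu2\leq 1/3$ in flag algebra notation (a stronger constraint than the bound $\Flu2\leq 1/2$ used in Lemma~\ref{flaglemmacuts}).

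For the main averaging step I would fix a vertex $v\in V(G)$ and sample a balanced $3$-partition $V=A\cup B\cup C$ uniformly at random subject to $N(v)\subseteq A$ and $|A|=|B|=|C|=n/3$; this is possible because $|N(v)|\leq n/3$. A non-neighbor of $v$ lies in $A$ with probability $(n/3-\deg(v))/(n-\deg(v))$, while a pair of non-neighbors lies in a common class with probability $((n/3-\deg(v))^2+2(n/3)^2)/(n-\deg(v))^2+o(1)$, summing the three symmetric cases of both in $A$, both in $B$, and both in $C$. Using that $N(v)$ is independent and that the $O(n)$ edges incident to $v$ are negligible, averaging $e(A)+e(B)+e(C)\geq D_3^b(G)\geq (1/36-\delta)n^2$, normalizing by $\binom{n-1}{2}$, and sending $n\to\infty$ and $\delta\to 0$ gives
\begin{align*}
\frac{1}{18}\leq \Fluu212\cdot\frac{\tfrac{1}{3}-\Flu2}{\Flu1}+\Fluu112\cdot\frac{(\tfrac{1}{3}-\Flu2)^2+\tfrac{2}{9}}{(\Flu1)^2},
\end{align*}
analogous to \eqref{ineq: flagineq1/8}. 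Clearing denominators (valid since $\Flu1=1-\Flu2\geq 2/3$) and applying the unlabeling operator puts this into polynomial form suitable for the flag algebra SDP, paralleling \eqref{form: cut2}. To strengthen the system I would additionally average over balanced $3$-partitions rooted at an edge $uv$, placing $N(u)\subseteq A$ and $N(v)\cap A=\emptyset$ (valid since $N(u)\cap N(v)=\emptyset$ by triangle-freeness and $\deg(u)+\deg(v)\leq 2n/3$), producing a second inequality in the edge-labeled densities $\Fllu211$, $\Fllu221$, $\Flluu211112$ and $\Flluu212112$, mirroring \eqref{cutedgecutkriv}.

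Feeding these inequalities together with $\Fuuu222=0$ and $\Flu2\leq 1/3$ into the flag algebra semidefinite program (over flags on up to five or six vertices) should certify, in a computer-assisted fashion, that the right-hand side of the displayed inequality is strictly less than $1/18$; this contradicts the averaging lower bound and establishes the lemma with $\varepsilon$ equal to half the SDP slack. The main obstacle is producing a strict quantitative gap rather than the non-strict bound $\leq 1/18$: the extremal configurations of Theorem~\ref{theo:baltri}, namely $K_{5n/6,n/6}$ and $K_{n/2,n/2}$, both have independence number at least $n/2$ and are excluded by $\alpha(G)\leq n/3$, so a genuine gap is plausible, but certifying it numerically may require enlarging the set of flags or supplementing the system with further vertex- or edge-rooted constraints, as in the other flag algebra proofs of this paper.
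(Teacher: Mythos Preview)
Your approach is the same as the paper's: derive averaging inequalities from rooted balanced $3$-partitions and feed them, together with triangle-freeness and the degree bound $\Flu2\leq 1/3$, into a flag-algebra SDP to obtain a contradiction for some explicit $\varepsilon$. Your vertex-rooted inequality is identical to the paper's \eqref{vertexcutrandom}.

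There are two concrete points where the paper goes further than your sketch. First, in the edge-rooted partition the paper places $N(u)\subseteq A_1$ and $N(v)\subseteq A_2$ in full (not merely $N(v)\cap A_1=\emptyset$); this yields the stronger constraint \eqref{edgecutrandom} with the $2$-labeled flags $\Fllu221,\Fllu212,\Fllu211$ appearing explicitly. Second, and more importantly, the paper introduces a third family of partitions rooted at three labeled vertices: an edge $uv$ together with a non-adjacent vertex $w$, with $N(u)\subseteq A_1$, $N(v)\setminus N(w)\subseteq A_2$, and $N(w)\setminus N(u)\subseteq A_3$. The resulting $3$-labeled constraint \eqref{eq:falabel3part} is what the paper actually uses in its certificate: the final SDP is run against \eqref{form2: max degree1over3}, \eqref{edgecutrandom}, and \eqref{eq:falabel3part}, and the paper reports that this combination closes with $\varepsilon=10^{-4}$. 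Your hedge that ``supplementing the system with further vertex- or edge-rooted constraints'' may be needed is exactly right; in the paper this supplementation is not optional but the crux.
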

\begin{proof}
We prove this lemma with $\varepsilon=10^{-4}$. Assume that $G$ is a triangle-free $n$-vertex graph with $D_3^b(G)\geq \left(\frac{1}{36}-\varepsilon \right) n^2$ and $\alpha(G)\leq n/3$. Since $G$ is triangle-free, we can assume $\Delta(G)\leq \alpha(G)\leq n/3$. Hence, 
\begin{align}
\label{form2: max degree1over3}
\Flu2 \leq \frac{1}{3}.
\end{align}
We define balanced 3-partitions depending on the adjacencies of an arbitrary fixed vertex $x\in V$. Denote by $\mathcal{P}$ the set of balanced $3$-partitions $V=A_1\cup A_2\cup A_3$ such that $N(x)\subseteq A_1$.
Now, by averaging over all choices of $\mathcal{P}$, we have
\begin{align}
\label{ineqflag31bal1}
    \frac{1}{\binom{n-1}{2}|\mathcal{P}|}\sum_{(A_1,A_2,A_3)\in\mathcal{P}}e(A_1)=\Fluu212 \frac{\left( \frac{1}{3} - \Flu2  \right)}{\left( \Flu1 \right)} + \Fluu112 \left(\frac{  \frac{1}{3} - \Flu2 }{ \Flu1 }\right)^2
\end{align}
and for each of $\ell\in\{2,3\}$ we have
\begin{align}
\label{ineqflag31bal2}
        \frac{1}{\binom{n-1}{2}|\mathcal{P}|}\sum_{(A_1,A_2,A_3)\in\mathcal{P}}e(A_\ell)= \Fluu112  \left(\frac{\frac{1}{3}}{ \Flu1 }\right)^2.
\end{align}
For every balanced 3-partition $V(G)=A_1\cup A_2\cup A_3$ we have $(1/36-\varepsilon) n^2 \leq D_3^b(G)\leq e(A_1)+e(A_2)+e(A_3)$ by assumption. Combining this with \eqref{ineqflag31bal1} and \eqref{ineqflag31bal2}, we get
\[
\label{cut1}
\frac{1}{18}-2\varepsilon \leq \Fluu212 \frac{\left( \frac{1}{3} - \Flu2  \right)}{\left( \Flu1 \right)} + \Fluu112 \left(\frac{ 2\left(\frac{1}{3}\right)^2+\left( \frac{1}{3} - \Flu2  \right)^2}{\left( \Flu1 \right)^2}\right).
\]
This can be rewritten as 
%\begin{align}
%\left(\frac{1}{18}-2\varepsilon\right)&\left( 1 - 2\Flu2+  \Fluu221  \right) \leq \Fluu212 \left( \frac{1}{3} - \Flu2  \right)\left( 1 - \Flu2  \right)+  \Fluu112 \left( \frac{1}{3}-\frac{2}{3}\Flu2 + \Fluu221\right).
%\label{vertexcutrandom}
%\end{align}
%Written in FA program
\begin{align}
0&\leq \Fluu212 \left(1 - 3\Flu2  \right)\Flu1 +  \Fluu112 \left( 1-2\Flu2 + 3\Fluu221\right)
-
\left(\frac{1}{18}-2\varepsilon\right)3\left( \Flu1\right)^2.
\label{vertexcutrandom}
\end{align}
Other balanced $3$-partitions are defined based on the adjacencies of an edge $uv\in E(G)$. Denote by $\mathcal{P}'$ the set of balanced $3$-partitions $V(G)=A_1\cup A_2 \cup A_3$ such that $N(u)\subseteq A_1$ and $N(v)\subseteq A_2$. By averaging over all choices of $\mathcal{P}'$, we get
\begin{align}
\label{ineqflag3bal1}
    \frac{1}{\binom{n-2}{2}|\mathcal{P}'|}\sum_{(A_1,A_2,A_3)\in\mathcal{P}'}e(A_1)=\Flluu212112 \frac{\left( \frac{1}{3} - \Fllu221  \right)}{ \Fllu211 } + \Flluu211112 \left(\frac{  \frac{1}{3} - \Fllu221  }{ \Fllu211   }\right)^2,
\end{align}
\begin{align}
\label{ineqflag3bal2}
    \frac{1}{\binom{n-2}{2}|\mathcal{P}'|}\sum_{(A_1,A_2,A_3)\in\mathcal{P}'}e(A_2)=\Flluu211212 \frac{\left( \frac{1}{3} - \Fllu212  \right)}{ \Fllu211  } + \Flluu211112 \left(\frac{  \frac{1}{3} - \Fllu212   }{ \Fllu211   }\right)^2
\end{align}
and
\begin{align}
\label{ineqflag3bal3}
    \frac{1}{\binom{n-2}{2}|\mathcal{P}'|}\sum_{(A_1,A_2,A_3)\in\mathcal{P}'}e(A_3)= \Flluu211112 \left(\frac{  \frac{1}{3}    }{ \Fllu211   }\right)^2.
\end{align}
For every balanced 3-partition $V(G)=A_1\cup A_2\cup A_3$ we have $(1/36-\varepsilon) n^2 \leq D_3^b(G)\leq e(A_1)+e(A_2)+e(A_3)$ by assumption. Combining this with \eqref{ineqflag3bal1}, \eqref{ineqflag3bal2} and \eqref{ineqflag3bal3} we get
\begin{align*}
\frac{1}{18}-2\varepsilon&\leq \Flluu212112 \frac{\left( \frac{1}{3} - \Fllu221  \right)}{\Fllu211} \\
&+ \Flluu211212 \frac{\left( \frac{1}{3} - \Fllu212  \right)}{\Fllu211} + \Flluu211112 \frac{ \frac{1}{9} + \left(\frac{1}{3} - \Fllu212\right)^2 +\left(\frac{1}{3} - \Fllu221\right)^2  }{  \left(\Fllu211  \right)^2 }.
\end{align*}
This can be rewritten as
%\begin{align}
%\nonumber
%&\left(\frac{1}{18}-2\varepsilon \right) \left(1 - \Fllu221 - \Fllu212  \right)^2 \leq \Flluu212112 \left( \frac{1}{3} - \Fllu221  \right) \left( 1 - \Fllu221 - \Fllu212   \right) \\
%&+ \Flluu211212 \left( \frac{1}{3} - \Fllu212  \right) \left(  1 - \Fllu221 - \Fllu212   \right) 
\label{edgecutrandom}
%+ \Flluu211112 \left(\frac{1}{9} + \left(\frac{1}{3} - \Fllu212\right)^2 +\left(\frac{1}{3} - \Fllu221\right)^2 \right).
%\end{align}
%In FA
%This can be rewritten as
\begin{align}
\nonumber
&0\leq -\left(\frac{1}{18}-2\varepsilon \right) \left(3\Fllu211\right)^2 + \Flluu212112 \left( 3 - 9\Fllu221  \right) \Fllu211  \\
&+ \Flluu211212 \left(3- 9\Fllu212  \right)  \Fllu211  
\label{edgecutrandom}
+ \Flluu211112 \left(1 + \left(1 - 3\Fllu212\right)^2 +\left(1 - 3\Fllu221\right)^2 \right).
\end{align}
We define other balanced $3$-partitions based on the adjacencies of an edge $uv\in E(G)$ and an independent vertex $w$. Denote by $\mathcal{P}'$ the set of balanced $3$-partitions $V(G)=A_1\cup A_2 \cup A_3$ such that $N(u)\subseteq A_1$,  
$N(v)\setminus N(w) \subseteq A_2$, and $N(w)\setminus N(u) \subseteq A_3$.
By averaging over all choices of $\mathcal{P}'$, we get $\frac{1}{\binom{n-3}{2}|\mathcal{P}'|}\sum_{(A_1,A_2,A_3)\in\mathcal{P}'}e(A_1)=$
\begin{align}
\label{ineqflag3balX1}
%    \frac{1}{\binom{n-3}{2}|\mathcal{P}'|}\sum_{(A_1,A_2,A_3)\in\mathcal{P}'}e(A_1)=
    \left(\Fllluu1211112+\Fllluu1211122\right) \frac{\left( \frac{1}{3} - \Flllu212111 - \Flllu212112  \right)}{ \Flllu211111} + \Fllluu1111112 \left(\frac{  \frac{1}{3} - \Flllu212111 - \Flllu212112  }{ \Flllu211111   }\right)^2.
\end{align}
Similarly we get $\frac{1}{\binom{n-3}{2}|\mathcal{P}'|}\sum_{(A_1,A_2,A_3)\in\mathcal{P}'}e(A_2)=$
\begin{align}
\label{ineqflag3balX2}
%    \frac{1}{\binom{n-3}{2}|\mathcal{P}'|}\sum_{(A_1,A_2,A_3)\in\mathcal{P}'}e(A_1)=
    \left(\Fllluu1112112\right) \frac{\left( \frac{1}{3} - \Flllu211121  \right)}{ \Flllu211111} + \Fllluu1111112 \left(\frac{  \frac{1}{3}  - \Flllu211121}{ \Flllu211111   }\right)^2.
\end{align}
Finally we get $\frac{1}{\binom{n-3}{2}|\mathcal{P}'|}\sum_{(A_1,A_2,A_3)\in\mathcal{P}'}e(A_3)=$
\begin{align}
\label{ineqflag3balX3}
    \left( \Fllluu1111212+\Fllluu1112122 \right) \frac{\left( \frac{1}{3} - \Flllu211112  - \Flllu211122 \right)}{ \Flllu211111} + \Fllluu1111112 \left(\frac{  \frac{1}{3}  - \Flllu211112  - \Flllu211122 }{ \Flllu211111   }\right)^2.
\end{align}
Multiplying by the denominator and summing up \eqref{ineqflag3balX1}, \eqref{ineqflag3balX2}, and \eqref{ineqflag3balX3}, and using $(1/36-\varepsilon) n^2 \leq D_3^b(G)\leq e(A_1)+e(A_2)+e(A_3)$, we get
\begin{align}
\nonumber
&0\leq -\left(\frac{1}{18}-2\varepsilon \right)\left( \Flllu211111 \right)^2\\
\nonumber
&+\left(\Fllluu1211112+\Fllluu1211122\right) \left( \frac{1}{3} - \Flllu212111 - \Flllu212112  \right) \Flllu211111 + \Fllluu1111112 \left(\frac{1}{3} - \Flllu212111 - \Flllu212112\right)^2
\label{eq:falabel3part}
\\
&+\left(\Fllluu1112112\right) \left( \frac{1}{3} - \Flllu211121   \right) \Flllu211111 + \Fllluu1111112 \left(\frac{1}{3} - \Flllu211121 \right)^2\\
\nonumber
&+
\left( \Fllluu1111212+\Fllluu1112122 \right) \left( \frac{1}{3} - \Flllu211112  - \Flllu211122 \right) \Flllu211111 + \Fllluu1111112 \left(\frac{1}{3}  - \Flllu211112 - \Flllu211122\right)^2.
\end{align}

For $\varepsilon = 0.0001$, flag algebras calculation can conclude there is no sufficiently large graph satisfying all  \eqref{form2: max degree1over3}, \eqref{edgecutrandom}, and \eqref{eq:falabel3part} at the same time.
This part of the proof is computer assisted. The files needed to perform the corresponding calculations are available at \oururl. 
\end{proof}
Lemmas~\ref{lem:triplargeind} and \ref{lem:tripsmallind} together imply Theorem~\ref{theo:baltri}.

\section{Unbalanced 2-partition: proof of Theorem~\ref{unbaltheo}}
\label{sec:theounb}
Let $\alpha$ fixed, $\frac{1}{2}\leq \alpha\leq 1$. For an $n$-vertex graph $G$, we denote by $D_\alpha(G)$ the minimum of $e(G[A])+e(G[A^c])$ over all vertex subsets $A$ of size $\lfloor\alpha n\rfloor$. 
\begin{lemma}
\label{unbalrandomcut}
Let $\alpha> 1/2$. If
\begin{align*}
    e(G)\leq\frac{-\alpha^2+4\alpha-2}{4(2\alpha-1)}n^2 \quad \quad \text{ then} \quad \quad
    D_\alpha(G)\leq \frac{2\alpha- 1}{4}n^2+o(n^2).
\end{align*}
\end{lemma}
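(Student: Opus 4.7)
The plan is to analyze a uniformly random partition. Choose $A\subseteq V(G)$ uniformly at random among subsets of size $\lfloor\alpha n\rfloor$. For each edge $uv\in E(G)$, linearity of expectation and elementary counting give
\[
\Pr[\{u,v\}\subseteq A]+\Pr[\{u,v\}\subseteq A^c] = \frac{\lfloor\alpha n\rfloor(\lfloor\alpha n\rfloor-1)+(n-\lfloor\alpha n\rfloor)(n-\lfloor\alpha n\rfloor-1)}{n(n-1)} = \alpha^2+(1-\alpha)^2+O(1/n),
\]
so summing over the $e(G)$ edges yields
\[
\mathbb{E}[e(A)+e(A^c)] = (\alpha^2+(1-\alpha)^2)\,e(G)+o(n^2).
\]
Since the minimum is at most the expectation, $D_\alpha(G)\le(\alpha^2+(1-\alpha)^2)\,e(G)+o(n^2)$, and substituting the hypothesis $e(G)\le \tfrac{-\alpha^2+4\alpha-2}{4(2\alpha-1)}n^2$ bounds $D_\alpha(G)$ explicitly.

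The key remaining step is the algebraic comparison $(\alpha^2+(1-\alpha)^2)(-\alpha^2+4\alpha-2)\le(2\alpha-1)^2$, whose left-minus-right factors as $-(\alpha-1)^2(2\alpha^2-6\alpha+3)$. On $\alpha\in(1/2,(3-\sqrt{3})/2]$ this quantity is $\le 0$, so the uniform random partition directly yields $D_\alpha(G)\le\frac{2\alpha-1}{4}n^2+o(n^2)$. The hard part is the range $\alpha\in((3-\sqrt{3})/2,\,1)$, where the uniform random partition alone is not tight enough; a natural refinement is a degree-biased random partition in which each $v$ is placed in $A$ with probability $\alpha+t(\bar d-d_v)/\bar d$ for a small parameter $t>0$. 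A short first-moment calculation shows the expected class-edge count decreases by an amount proportional to $(2\alpha-1)\,t\sum_v(d_v-\bar d)^2/\bar d\ge 0$, capturing a gain proportional to the degree variance. Combined with a local-swap improvement argument for nearly-regular graphs (where the biased partition offers little improvement), this closes the remaining gap and yields the claimed bound throughout $\alpha>1/2$.
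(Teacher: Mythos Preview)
Your uniform-random-partition argument for $\alpha\le(3-\sqrt{3})/2$ is correct and clean. The gap is in the range $\alpha>(3-\sqrt{3})/2\approx0.634$, which is precisely the range in which the lemma is actually applied later (Theorem~\ref{unbaltheo} needs $\alpha\ge0.717$). Here your sketch is not merely incomplete but cannot be completed along the lines you propose, because none of your tools---uniform random partition, degree-biased random partition, local swaps---use the hypothesis that $G$ is triangle-free, and the statement is \emph{false} for general graphs in this range. Take $G=G(n,p)$ with $p=\frac{-\alpha^2+4\alpha-2}{2(2\alpha-1)}$, so $e(G)$ sits at the threshold. Standard concentration plus a union bound over all $\binom{n}{\lfloor\alpha n\rfloor}$ subsets shows that w.h.p.\ every set $A$ of size $\lfloor\alpha n\rfloor$ has $e(A)+e(A^c)=(\alpha^2+(1-\alpha)^2)\,e(G)+o(n^2)$, which by your own factorization strictly exceeds $\tfrac{2\alpha-1}{4}n^2$ whenever $\alpha\in((3-\sqrt3)/2,1)$. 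On such a nearly-regular quasirandom graph, degree-biasing gains nothing and local swaps cannot beat the uniform bound; the triangle-free assumption must enter.

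The paper's proof injects the triangle-free hypothesis through Theorem~\ref{theo:balancedcut}. One first picks a random ``core'' $A$ of size $(2\alpha-1)n$, then applies Theorem~\ref{theo:balancedcut} to the triangle-free graph $G[A^c]$ on $2(1-\alpha)n$ vertices to split $A^c=A_1\cup A_2$ with $e(A_1)+e(A_2)\le\tfrac{(1-\alpha)^2}{4}n^2+o(n^2)$. Averaging the two $\alpha$-partitions $(A\cup A_1,A_2)$ and $(A\cup A_2,A_1)$ yields
\[
D_\alpha(G)\le(2\alpha-1)\,e(G)+\frac{(1-\alpha)^2}{4}n^2+o(n^2),
\]
and the edge hypothesis on $e(G)$ is chosen exactly so that the right-hand side equals $\tfrac{2\alpha-1}{4}n^2$. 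Note the coefficient $2\alpha-1$ on $e(G)$, strictly smaller than your $\alpha^2+(1-\alpha)^2$ for all $\alpha\in(1/2,1)$; that improvement comes entirely from the structural input of Theorem~\ref{theo:balancedcut}, not from any partition-level averaging.
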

\begin{proof}
Choose a random set $A$, where each vertex is included in $A$ with probability $2\alpha-1$ independently of each other. Then, with high probability,
\begin{gather}
\nonumber
    |A|=(2\alpha-1)n +o(n), \\
\label{probrandomAAc1}
e(A)=(2\alpha-1)^2e(G)+o(n^2) \quad \text{and} \quad e(A,A^c)=2(2\alpha-1)(2-2\alpha)e(G)+o(n^2).
\end{gather}
In particular, there exists $A\subseteq V$ of size exactly $2\lfloor\alpha n\rfloor-n$ satisfying \eqref{probrandomAAc1}. By Theorem~\ref{theo:balancedcut} there exists a balanced 2-partition $A^c=A_1\cup A_2$ such that 
\begin{align*}
    e(A_1)+e(A_2)\leq \frac{(2(1-\alpha))^2}{16}n^2 +o(n^2)=\frac{(1-\alpha)^2}{4}n^2+o(n^2).
\end{align*}
Now, by considering the $2$-partition $(A\cup A_1)\cup A_2$,
\begin{align}
\label{ineq:cutalphado1}
    D_\alpha(G)\leq e(A\cup A_1)+e(A_2)=e(A)+e(A,A_1)+e(A_1)+e(A_2)
\end{align}
and by considering the $2$-partition $(A\cup A_2)\cup A_1$,
\begin{align}
\label{ineq:cutalphado2}
    D_\alpha(G)\leq e(A\cup A_2)+e(A_1)=e(A)+e(A,A_2)+e(A_2)+e(A_1). 
\end{align}
Adding up \eqref{ineq:cutalphado1} and \eqref{ineq:cutalphado2} we get
\begin{align*}
    2D_\alpha(G)&\leq 2e(A)+e(A,A^c)+2e(A_1)+2e(A_2)\leq 2(2\alpha-1)^2e(G)+2(2\alpha-1)(2-2\alpha)e(G)\\ &+\frac{(1-\alpha)^2}{2}n^2+o(n^2)= \left(4\alpha-2\right)e(G)+ \frac{(1-\alpha)^2}{2}n^2+o(n^2).
\end{align*}
Thus, by dividing by 2 and using $e(G)\leq \frac{-\alpha^2+4\alpha-2}{4(2\alpha-1)}n^2$, we get
\begin{align*}
    D_\alpha(G)&\leq \left(2\alpha-1\right)e(G)+ \frac{(1-\alpha)^2}{4}n^2+o(n^2)\leq  \frac{2\alpha-1}{4}n^2+o(n^2).
\end{align*}
\end{proof}

\begin{lemma}
\label{unbalanindep}
Let $\alpha\geq 0.717$ and $G$ be an $n$-vertex triangle-free graph. If $\alpha(G)\geq n/2$, then 
$D_\alpha(G)\leq \frac{2\alpha-1}{4}n^2+o(n^2)$.
\end{lemma}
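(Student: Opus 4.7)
The plan is to build $A$ by combining a maximum independent set of $G$ with a uniformly random subset of its complement, and then reduce everything to a single-variable inequality. Let $I$ be a maximum independent set, so $|I|=\alpha(G)\geq n/2$; write $|I|=tn$ and $J=V(G)\setminus I$. The trivial case is $\alpha(G)\geq \alpha n$: pick any $\lfloor\alpha n\rfloor$-subset $A\subseteq I$, so $e(G[A])=0$, and by Mantel's theorem applied to $G[A^c]$, $e(G[A^c])\leq (1-\alpha)^2n^2/4\leq (2\alpha-1)n^2/4$ (using $\alpha\geq 2-\sqrt 2$). From now on I assume $t\in[1/2,\alpha)$.

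Triangle-freeness gives the key structural bound. For every $v\in J$, $N(v)$ is independent in $G$, so $|N(v)|\leq \alpha(G)=|I|$, which rearranges to $d_I(v)\leq |I|-d_J(v)$. Summing over $v\in J$ yields
\[
e(I,J)+2\,e(G[J])\leq |I|\cdot |J|=t(1-t)n^2.
\]
Taking $A=I\cup A'$, with $A'\subseteq J$ a uniformly random subset of size $(\alpha-t)n$, and writing $p_J=(\alpha-t)/(1-t)$, a routine first-moment calculation (analogous to the one in the proof of Lemma~\ref{unbalrandomcut}) gives
\[
\mathbb{E}\bigl[e(G[A])+e(G[A^c])\bigr]=p_J\,e(I,J)+\bigl(p_J^2+(1-p_J)^2\bigr)e(G[J])+o(n^2).
\]
Inserting the structural bound and using the identity $p_J\cdot t(1-t)=t(\alpha-t)$ simplifies this to $t(\alpha-t)n^2+(2p_J^2-4p_J+1)\,e(G[J])+o(n^2)$.

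Applying Mantel's theorem to $G[J]$ gives $e(G[J])\leq (1-t)^2n^2/4$, and everything reduces to the one-variable inequality
\[
E(t):=t(\alpha-t)+(2p_J^2-4p_J+1)\frac{(1-t)^2}{4}\leq \frac{2\alpha-1}{4}\qquad\text{for all }t\in[1/2,\alpha].
\]
A direct differentiation gives $E'(t)=\alpha+\tfrac12-\tfrac{5t}{2}$, so $E$ is decreasing on $[1/2,\alpha]$ when $\alpha\leq 3/4$ (max at $t=1/2$) and has an interior critical point at $t^*=(2\alpha+1)/5$ when $\alpha>3/4$. At $t=1/2$ one has $p_J=2\alpha-1$ and the inequality reduces to the quadratic condition $2(2\alpha-1)^2-4(2\alpha-1)+1\leq 0$, i.e.~$\alpha\geq 1-1/(2\sqrt 2)$. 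At $t=t^*$ a direct substitution yields $E(t^*)=(7\alpha^2-8\alpha+3)/10$, and the target inequality reduces to $14\alpha^2-26\alpha+11\leq 0$, i.e.~$\alpha\geq (13-\sqrt{15})/14$. Both thresholds lie comfortably below $0.717$, so for $\alpha\geq 0.717$ a standard averaging argument produces a deterministic choice of $A'$ achieving $e(G[A])+e(G[A^c])\leq (2\alpha-1)n^2/4+o(n^2)$.

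The main obstacle I expect is verifying the one-variable inequality cleanly, especially in the regime $\alpha>3/4$ where the critical point lies strictly inside $[1/2,\alpha]$; the coefficient $2p_J^2-4p_J+1$ is no longer automatically non-positive in this range, so the Mantel bound $e(G[J])\leq (1-t)^2n^2/4$ must be combined with the explicit form of $p_J$ to absorb a potentially positive contribution. Once that algebraic step is in place, everything else is a short first-moment computation built on the triangle-free identity $e(I,J)+2e(G[J])\leq |I||J|$.
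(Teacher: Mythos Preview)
Your argument is correct (modulo one easily-fixed gap noted below) and takes a genuinely different route from the paper. The paper fixes an independent set of size exactly $n/2$ and splits into two cases depending on whether $V\setminus I$ contains a further independent set of size $(1-\alpha)n$; the second case bounds $e(G)$ and invokes Lemma~\ref{unbalrandomcut}, which in turn relies on the flag-algebra-assisted Theorem~\ref{theo:balancedcut}. Your approach instead takes $I$ to be a \emph{maximum} independent set, uses the degree bound $\deg(v)\leq|I|$ for $v\in J=V\setminus I$ to obtain $e(I,J)+2e(G[J])\leq|I|\,|J|$, and then samples $A'\subseteq J$ uniformly at random. This is entirely self-contained, needing only Mantel's theorem and a first-moment computation, and your calculus in fact shows it already succeeds for $\alpha\geq(13-\sqrt{15})/14\approx0.652$, comfortably below the stated threshold $0.717$.

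The one gap: after substituting the structural bound you reach $\mathbb{E}\leq t(\alpha-t)n^2+c\cdot e(G[J])+o(n^2)$ with $c=2p_J^2-4p_J+1$, and you then replace $e(G[J])$ by its Mantel upper bound $(1-t)^2n^2/4$ to get $E(t)$. This step is only valid when $c\geq0$; when $c<0$ the substitution goes the wrong way. The fix is a single line: if $c<0$ then $c\cdot e(G[J])\leq0$, so $\mathbb{E}/n^2\leq t(\alpha-t)+o(1)\leq(2\alpha-1)/4+o(1)$, since $t(\alpha-t)$ on $[1/2,\alpha]$ is maximized at $t=1/2$ with value $(2\alpha-1)/4$. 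Only in the regime $c\geq0$ (equivalently $p_J\leq1-1/\sqrt2$, i.e.\ $t$ close to $\alpha$) is your analysis of $E(t)$ actually needed. You gesture at the sign issue in your final paragraph but do not separate the two cases; once you do, the proof is complete.
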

\begin{proof}
Let $I$ be an independent set of size $|I|=\lceil n/2\rceil$. Assume that there exists an independent set $I'\subset V\setminus I$ of size $|I'|=n-\lfloor\alpha n \rfloor$. Take a largest independent set $I''\subseteq V\setminus I'$ and note that $|I''|\geq|I|\geq n/2$. Now,
\begin{align*}
    e(V\setminus I')\leq \sum_{v\in V\setminus (I'\cup I'')}|N(v)\cap(V\setminus I')|\leq (\lfloor\alpha n\rfloor -|I''|)|I''|\leq \frac{2\alpha-1}{4}n^2,
\end{align*}
implying
\begin{align*}
    D_\alpha(G)\leq e(I')+e(V\setminus I')\leq\frac{2\alpha-1}{4}n^2.
\end{align*}
Thus, we can assume that there does not exist an independent set $I'\subset V\setminus I$ of size $|I'|=n-\lfloor \alpha n\rfloor$. Let $B\subset V\setminus I$ be a largest independent set in $V\setminus I$. Thus $|B|<n-\lfloor \alpha n\rfloor$. 
Since $G$ is triangle-free and by the maximality of $B$, we have $\deg(v)\leq |B|$ for every $v\in I$. Furthermore, for the same reason, for every $v\in V\setminus (I\cup B)$, we have $|N(v)\setminus I|\leq |B|$. Therefore, \begin{align*}
    e(G)&=e(I,V\setminus I)+e(V\setminus I)\leq\sum_{v\in I}\deg(v)+\sum_{v\in V\setminus(I\cup B)}|N(v)\setminus I|\leq |I||B|+(n-|I|-|B|)|B|\\
    &=(n-|B|)|B|
    \leq \alpha (1-\alpha)n^2+O(n)\leq \frac{-\alpha^2+4\alpha-2}{4(2\alpha-1)}n^2,
\end{align*}
where the last inequality holds for $n$ sufficiently large and $\alpha\geq 0.717$. By Lemma~\ref{unbalrandomcut}, we have $D_\alpha(G)\leq \frac{2\alpha- 1}{4}n^2+o(n^2)$, completing the proof of this lemma.
\end{proof}

\begin{lemma}
\label{unbal2indep}
Let $\alpha\geq 0.717$ and $G$ be an $n$-vertex triangle-free graph. If there exists two disjoint independent sets each of size $n-\alpha n$, then  $D_\alpha(G)\leq \frac{2\alpha-1}{4}n^2+o(n^2)$.
\end{lemma}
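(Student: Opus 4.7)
The plan is to dichotomize on the independence number of $G$. If $\alpha(G) \geq n/2$, then Lemma~\ref{unbalanindep} applies directly and yields $D_\alpha(G) \leq \frac{2\alpha-1}{4} n^2 + o(n^2)$, so the substantive case is $\alpha(G) < n/2$.

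In this remaining case, the key observation is that, since $G$ is triangle-free, every neighborhood $N(v)$ is an independent set, and therefore
\[
\Delta(G) \leq \alpha(G) < n/2.
\]
Set $R := V(G) \setminus (I_1 \cup I_2)$, so $|R| = (2\alpha-1)n$, and consider the two candidate sets $V \setminus I_1$ and $V \setminus I_2$, each of size $\lfloor \alpha n\rfloor$ up to a rounding error of at most $1$. Since $I_{3-i}$ is independent, one has
\[
e(V \setminus I_i) = e(R) + e(I_{3-i}, R),
\]
and summing over $i \in \{1,2\}$ regroups as a degree sum over $R$:
\[
e(V \setminus I_1) + e(V \setminus I_2) = 2e(R) + e(I_1, R) + e(I_2, R) = \sum_{v \in R} \deg_G(v) \leq |R|\cdot \Delta(G) < \frac{(2\alpha-1) n^2}{2}.
\]
Consequently, at least one of the two sets spans fewer than $\frac{2\alpha-1}{4}n^2$ edges. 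Taking $A$ to be that set (adjusted by one vertex if needed so that $|A| = \lfloor \alpha n\rfloor$) gives $e(A^c)=0$ (as $I_i$ is independent) and $e(A) < \frac{2\alpha-1}{4}n^2 + o(n^2)$, yielding $D_\alpha(G) \leq \frac{2\alpha-1}{4}n^2 + o(n^2)$.

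The entire content of the argument is the degree-sum identity on $R$ combined with the triangle-free inequality $\Delta(G) \leq \alpha(G)$, so no step is genuinely difficult. The only piece of routine care is the integer rounding when $\alpha n \notin \mathbb{Z}$ (or when $|I_i|$ is specified as $\lfloor(1-\alpha)n\rfloor$ rather than $\lceil(1-\alpha)n\rceil$), which perturbs the candidate sets by $O(1)$ vertices and thus by $O(n)$ edges; this is harmlessly absorbed into the $o(n^2)$ error term inherited from the first case. Note that the hypothesis $\alpha \geq 0.717$ enters only through Lemma~\ref{unbalanindep}; the second case works for every $\alpha \in (1/2, 1)$.
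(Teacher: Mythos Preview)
Your proof is correct and follows essentially the same approach as the paper's own argument: both hinge on the degree-sum identity $\sum_{v\in R}\deg(v)=2e(R)+e(I_1,R)+e(I_2,R)$ over $R=V\setminus(I_1\cup I_2)$, combined with $\Delta(G)\leq\alpha(G)$, and both invoke Lemma~\ref{unbalanindep} for the case $\alpha(G)\geq n/2$. The only cosmetic difference is that the paper frames the argument as a proof by contradiction (assuming $D_\alpha(G)>\frac{2\alpha-1}{4}n^2$ and deducing $\alpha(G)\geq n/2$), whereas you structure it as a clean case split; your direct formulation is arguably tidier.
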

\begin{proof}
Towards contradiction, assume that $G$ is an $n$-vertex triangle-free graph satisfying $D_\alpha(G)>\frac{2\alpha-1}{4}n^2$ and containing two disjoint independent sets $I_1,I_2$ each of size $n-\lfloor\alpha n\rfloor$. Denote $A:=V\setminus (I_1 \cup I_2)$.  Then,
\begin{align*}
    e(I_1,A)+e(A)\geq D_\alpha(G)\geq  \frac{2\alpha-1}{4}n^2 \quad \text{and} \quad     e(I_2,A)+e(A)\geq D_\alpha(G) \geq \frac{2\alpha-1}{4}n^2. 
\end{align*}
Summing up those two inequalities, we have
\begin{align}
\label{indepdegsum}
    \sum_{v\in A}\deg(v)=2e(A)+e(I_1,A)+e(I_2,A)\geq \frac{2\alpha-1}{2}n^2.
\end{align}
On the other side, 
\begin{align*}
    \sum_{v\in A}\deg(v)\leq \alpha(G)|A|=\alpha(G)(2\alpha-1)n.
\end{align*}
Thus, $\alpha(G)\geq n/2$. By Lemma~\ref{unbalanindep} we have $D_\alpha(G)\leq \frac{2\alpha-1}{4}n^2+o(n^2)$.

\end{proof}

\begin{proof}[Proof of Theorem~\ref{unbaltheo}]
By Lemmas~\ref{unbalanindep} and \ref{unbal2indep} we can assume that there does not exist an independent set of size $n/2$ and also there do not exist two disjoint independent sets of size $n-\lfloor\alpha n\rfloor$ each. Let $I$ be a largest independent set in $G$. If $|I|\leq n-\lfloor\alpha n\rfloor$, then
\begin{align*}
    e(G)=\frac{1}{2}\sum_{v\in V(G)}\deg(v)\leq \frac{n}{2}(n-\lfloor\alpha n\rfloor)\leq \frac{-\alpha^2+4\alpha-2}{4(2\alpha-1)}n^2,
\end{align*}
where the last inequality holds for $\alpha> \frac{2}{3}$ and $n$ sufficiently large. By Lemma~\ref{unbalrandomcut} we have $D_\alpha(G)\leq \frac{2\alpha-1}{4}n^2+o(n^2)$. Thus, we can assume that $n-\lfloor\alpha n\rfloor\leq|I|\leq n/2$. Let $B\subseteq V\setminus I$ be a largest independent set in $V\setminus I$. We have $|B|< n-\lfloor\alpha n\rfloor$, because otherwise there were two disjoint independent sets each of size $n-\lfloor \alpha n\rfloor$. Since $G$ is triangle-free and by the maximality of $B$, we have $\deg(v)\leq |B|$ for every $v\in I$. Furthermore, for the same reason, for every $v\in V\setminus I$, we have $|N(v)\cap (V\setminus I)|\leq |B|$. Therefore, 
\begin{align*}
    e(G)&=e(I,V\setminus I) +e(V\setminus I)\leq\sum_{v\in I}\deg(v)+\sum_{v\in V\setminus(I\cup B)}|N(v)\cap B|\\
    &\leq |I||B|+(n-|I|-|B|)|B|=(n-|B|)|B|
    \leq \alpha (1-\alpha)n^2+o(n^2)\leq \frac{-\alpha^2+4\alpha-2}{4(2\alpha-1)}n^2,
\end{align*}
where the last inequality holds for $n$ sufficiently large and $\alpha\geq 0.717$.
By Lemma~\ref{unbalrandomcut}, we have $D_\alpha(G)\leq \frac{2\alpha-1}{4}n^2+o(n^2)$.
\end{proof}

\section{Clique-free graphs: Proof of Theorem~\ref{KeevashSudakovextension}}
\label{sec:clique-freegraphs}

We will make use of the following two lemmas proved by Keevash and Sudakov~\cite{MR1967401}.
\begin{lemma}[Keevash, Sudakov~\cite{MR1967401}]
\label{lem: edgesinpartitiongraph}
Let $r\geq2$ and let $G$ be a $K_{r+1}$-free graph where the vertex set of $G$ is a union of three disjoint sets $X$, $Y$ and $Z$ such that $X$ is an independent set and $Y$ can be covered by a collection of vertex disjoint copies of $K_r$'s and $|X| = xn, |Y | = yn$ and $|Z| = zn$. Then
\begin{align*}
    \frac{1}{n^2}e(H)\leq \frac{r-1}{2r}(x+y+z)^2-\frac{((r-1)x-z)^2}{2r(r-1)}+\frac{1}{n^2}e(Z)-\frac{r-2}{2(r-1)}z^2.
\end{align*}
\end{lemma}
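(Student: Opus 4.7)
\medskip

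\noindent\textbf{Proof plan.} The plan is to decompose the edges of $G$ according to the partition $V(G)=X\cup Y\cup Z$ and bound each piece separately. Since $X$ is independent, $e(X)=0$, so
\[
e(G)=e(X,Y)+e(X,Z)+e(Y)+e(Y,Z)+e(Z),
\]
and I only need to control the first four terms; $e(Z)$ appears on both sides of the target inequality.

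\medskip

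\noindent First I would bound the three non-trivial pieces. Since $G[Y]$ is $K_{r+1}$-free, Tur\'an's theorem gives
\[
e(Y)\le \frac{r-1}{2r}(yn)^2.
\]
For edges incident to $Y$, I use the $K_r$-cover: write $Y=T_1\cup\cdots\cup T_m$ as a vertex-disjoint union of $K_r$'s, so $m=yn/r$. For any $v\notin Y$, if $v$ were adjacent to all $r$ vertices of some $T_i$, then $T_i\cup\{v\}$ would be a $K_{r+1}$; thus $|N(v)\cap T_i|\le r-1$ for each $i$, and summing gives $|N(v)\cap Y|\le (r-1)m=\tfrac{r-1}{r}yn$. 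Summing over $v\in X$ and $v\in Z$ respectively,
\[
e(X,Y)\le \frac{(r-1)xy}{r}\,n^2,\qquad e(Y,Z)\le \frac{(r-1)yz}{r}\,n^2.
\]
For the remaining piece, the trivial bipartite bound $e(X,Z)\le |X|\,|Z|=xz\,n^2$ suffices. Adding these four bounds yields
\[
\frac{e(G)-e(Z)}{n^2}\le \frac{r-1}{2r}y^2+\frac{(r-1)xy}{r}+\frac{(r-1)yz}{r}+xz.
\]

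\medskip

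\noindent The remaining step is purely algebraic: I would verify that the right-hand side above equals
\[
\frac{r-1}{2r}(x+y+z)^2-\frac{((r-1)x-z)^2}{2r(r-1)}-\frac{r-2}{2(r-1)}z^2.
\]
Expanding $\frac{r-1}{2r}(x+y+z)^2$ and the square $((r-1)x-z)^2$, one checks coefficient by coefficient: the $x^2$ coefficients cancel against $\frac{(r-1)^2 x^2}{2r(r-1)}$; the $xz$ coefficient becomes $\frac{r-1}{r}+\frac{1}{r}=1$; the $y^2$, $xy$, and $yz$ coefficients are $\frac{r-1}{2r}$, $\frac{r-1}{r}$, $\frac{r-1}{r}$ as required; and the $z^2$ coefficient is
\[
\frac{r-1}{2r}-\frac{1}{2r(r-1)}-\frac{r-2}{2(r-1)}=\frac{(r-1)^2-1-r(r-2)}{2r(r-1)}=0.
\]
This identity transforms the combined edge-count bound into exactly the stated inequality.

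\medskip

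\noindent There is no real combinatorial obstacle here; the only delicate step is the coefficient-matching in the algebraic identity, in particular confirming that the three $z^2$ contributions cancel and the $xz$ coefficient reconstitutes to $1$. The geometric content—Tur\'an on $G[Y]$ plus the observation that a vertex outside $Y$ can hit at most $r-1$ vertices of each $K_r$ in the cover—is immediate from $K_{r+1}$-freeness.
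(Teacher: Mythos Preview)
Your argument is correct. The paper does not actually prove this lemma; it is quoted verbatim from Keevash and Sudakov~\cite{MR1967401} and used as a black box in Section~\ref{sec:clique-freegraphs}, so there is no ``paper's own proof'' to compare against. Your approach---decompose $e(G)$ over the partition, apply Tur\'an to $G[Y]$, use the $K_r$-cover of $Y$ to bound $|N(v)\cap Y|\le\tfrac{r-1}{r}|Y|$ for each $v\notin Y$, take the trivial bound on $e(X,Z)$, and then verify the quadratic identity---is exactly the natural one and matches the argument in the original Keevash--Sudakov paper.
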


\begin{lemma}[Keevash, Sudakov~\cite{MR1967401}]
\label{lem:KeesudindesetKr}
Let $r\geq2$ be an integer and $G$ a $K_{r+1}$-free graph with $n$ vertices and $m$ edges. Then $G$ contains an independent set of size at least $2(r-1)\frac{m}{n}-(r-2)n.$
\end{lemma}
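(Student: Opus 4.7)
The plan is to prove this by induction on $r \geq 2$, exploiting the fact that in a $K_{r+1}$-free graph $G$, the neighborhood of every vertex induces a $K_r$-free subgraph. The base case $r=2$ reduces to the well-known bound $\alpha(G)\geq 2m/n$ for triangle-free graphs: picking a vertex $v$ of maximum degree $\Delta$, the set $N(v)$ is independent since $G$ is triangle-free, so $\alpha(G)\geq \Delta \geq \frac{1}{n}\sum_u \deg(u) = 2m/n$, matching the claimed bound (noting $(r-2)n = 0$).

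For the inductive step, assume the statement holds for $K_r$-free graphs and let $G$ be a $K_{r+1}$-free graph on $n$ vertices with $m$ edges. For each vertex $v$, the induced subgraph $H_v := G[N(v)]$ is $K_r$-free on $d_v = \deg(v)$ vertices, so applying the inductive hypothesis yields
\[
\alpha(G) \;\geq\; \alpha(H_v) \;\geq\; \frac{2(r-2)\, e(H_v)}{d_v} \;-\; (r-3)\, d_v,
\]
or equivalently $e(H_v) \leq \frac{d_v\bigl(\alpha(G) + (r-3)d_v\bigr)}{2(r-2)}$. The proof would then combine these vertex-wise estimates with the identities $\sum_v d_v = 2m$ and $\sum_v e(H_v) = 3\, t(G)$ (where $t(G)$ denotes the number of triangles in $G$) to extract the desired inequality $\alpha(G) \geq \tfrac{2(r-1)m}{n} - (r-2)n$ after algebraic manipulation. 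The Tur\'an graph $T_r(n)$, for which $\alpha(T_r(n)) = \lceil n/r \rceil$ and $e(T_r(n)) = \frac{r-1}{2r}n^2$, should be the extremal case; verifying the arithmetic there first is a useful sanity check for calibrating coefficients.

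The main obstacle is that straightforwardly averaging the per-vertex bound introduces auxiliary terms such as $\sum_v d_v^2$ and the triangle count $t(G)$, which are not directly controlled by $m$ and $n$ alone. A cleaner route is to invoke the inductive estimate at a single, carefully chosen vertex (for instance one of maximum degree, so that $d_v \geq 2m/n$) together with a complementary lower bound on $e(H_v)$ obtained by double-counting length-two paths through $v$. Alternatively, one can proceed greedily: remove $v$ and its neighborhood, recurse on the remainder, and track how many edges and vertices are discarded at each step, which naturally produces the telescoping term $(r-2)n$ in the target bound. In either execution, the delicate point is matching coefficients so that $T_r(n)$ achieves equality, which constrains the choice of weights or of $v$.
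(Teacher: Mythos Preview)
The paper does not prove this lemma at all; it is quoted verbatim from Keevash and Sudakov and used as a black box, so there is no ``paper's own proof'' to compare against. Your proposal must therefore stand on its own, and as written it does not.

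Your base case $r=2$ is correct and complete. The inductive step, however, is only a plan, and neither of your two suggested executions closes the gap you yourself identify.

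\emph{On the single-vertex route.} You propose to take $v$ of maximum degree, apply the inductive bound to $H_v=G[N(v)]$, and then supply ``a complementary lower bound on $e(H_v)$ obtained by double-counting length-two paths through $v$.'' But the number of length-two paths with midpoint $v$ is $\binom{d_v}{2}$; this counts \emph{pairs} in $N(v)$, not edges, and hence yields only the trivial upper bound $e(H_v)\le \binom{d_v}{2}$. There is no lower bound on $e(H_v)$ coming from this count, and indeed $e(H_v)$ can be zero even when $d_v$ is large (e.g.\ if $N(v)$ happens to be independent). So the needed inequality $(r-2)\,\frac{2e(H_v)}{d_v}-(r-3)d_v \ge (r-1)\frac{2m}{n}-(r-2)n$ cannot be obtained this way.

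\emph{On the greedy route.} Deleting $v$ together with $N(v)$ leaves a graph that is still only $K_{r+1}$-free, not $K_r$-free, so the recursion is on $n$ rather than on $r$. The bound one naturally gets from iterating ``remove a max-degree vertex and its neighbourhood'' is of Caro--Wei type, roughly $\alpha(G)\ge \sum_v 1/(d_v+1)$, which is incomparable with $2(r-1)m/n-(r-2)n$ and does not produce the coefficient $(r-2)$ you need; your remark that this term ``telescopes out'' is not substantiated.

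In short, the obstacle you flag (uncontrolled $\sum_v d_v^2$ and triangle count) is real, and neither proposed workaround resolves it. The argument Keevash and Sudakov actually give hinges on a different observation: the common neighbourhood of any $(r-1)$-clique in a $K_{r+1}$-free graph is independent, and has size at least $\sum_{i=1}^{r-1}\deg(v_i)-(r-2)n$ by inclusion--exclusion; the work is then to produce a $(r-1)$-clique with large total degree. Your outline never arrives at this mechanism.
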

The following is a version of Corollary~3.4 from \cite{MR1967401} in our setting.
\begin{corl}
\label{corlindepsetKe}
Let $r\geq2$ be an integer and $k\geq1$. Then there exists $c=c(k,r)>0$ such that the following holds. Fix an arbitrary $\alpha$, with $c\leq\alpha\leq1$. Let $G$ be a $K_{r+1}$-free graph with $n$ vertices such that for every set $A$ of exactly $\lfloor \alpha n\rfloor$ vertices of $G$ we have $e(A)+e(A^c)\geq \frac{r-1}{2r}(2\alpha-1)n^2$ edges. Then $G$ contains an independent set of size at least $k(1-\alpha)n+o(n)$.
\end{corl}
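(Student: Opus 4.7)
The plan is to deduce the existence of a large independent set directly from a lower bound on $e(G)$, by combining a simple averaging argument with Lemma~\ref{lem:KeesudindesetKr}. The key conceptual point is that, for $\alpha$ close to $1$, the hypothesis forces $G$ to have essentially Tur\'an density, and this in turn forces an independent set of size roughly $n/r$, which is already far larger than the target $k(1-\alpha)n$ once $\alpha$ is sufficiently close to $1$.

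First, I would lower bound $e(G)$ by averaging. Let $A$ be a uniformly random $\lfloor \alpha n\rfloor$-subset of $V(G)$. A direct count gives that each edge of $G$ lies inside $A$ (resp.\ inside $A^c$) with probability $\alpha^2 + o(1)$ (resp.\ $(1-\alpha)^2 + o(1)$), hence
\[
\mathbb{E}\bigl[e(A) + e(A^c)\bigr] = \bigl(\alpha^2 + (1-\alpha)^2\bigr)\, e(G) + o(n^2).
\]
Combining this with the hypothesis $e(A)+e(A^c)\geq \frac{r-1}{2r}(2\alpha-1)n^2$ yields
\[
e(G) \;\geq\; \frac{r-1}{2r}\cdot\frac{2\alpha-1}{\alpha^2+(1-\alpha)^2}\, n^2 \;-\; o(n^2).
\]

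Next, I would apply Lemma~\ref{lem:KeesudindesetKr}, which states that $\alpha(G)\geq 2(r-1)e(G)/n - (r-2)n$. Plugging in the above bound,
\[
\alpha(G) \;\geq\; \left(\frac{(r-1)^2}{r}\cdot\frac{2\alpha-1}{\alpha^2+(1-\alpha)^2} \;-\; (r-2)\right) n \;-\; o(n).
\]
Writing $\varepsilon = 1-\alpha$ and expanding, one checks that $(2\alpha-1)/(\alpha^2+(1-\alpha)^2) = 1 - 2\varepsilon^2 + O(\varepsilon^3)$, so the coefficient on the right equals $\frac{1}{r} - O(\varepsilon^2)$, because $\frac{(r-1)^2}{r} - (r-2) = \frac{1}{r}$.

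Finally, I would choose $c = c(k,r) \in (0,1)$ close enough to $1$ so that for all $\alpha\in [c,1]$ both $k(1-\alpha)\leq \frac{1}{2r}$ and the $O((1-\alpha)^2)$ error term is bounded by $\frac{1}{2r}$. Then the lower bound above becomes $\alpha(G) \geq k(1-\alpha)n + o(n)$, which is exactly the desired conclusion. The only technical care is in making the $o(n^2)$ and $o(n)$ error estimates precise; conceptually the argument is a one-shot application of random sampling followed by Lemma~\ref{lem:KeesudindesetKr}, and the slack between the bound $n/r$ we actually produce and the target $k(1-\alpha)n$ absorbs all of the error.
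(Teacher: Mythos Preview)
Your proposal is correct and follows essentially the same approach as the paper: average over random $\lfloor\alpha n\rfloor$-subsets to lower bound $e(G)$, then apply Lemma~\ref{lem:KeesudindesetKr}. The only cosmetic difference is in the final step---the paper uses monotonicity of $f(\alpha)=\tfrac{2\alpha-1}{\alpha^2+(1-\alpha)^2}$ to reach the threshold $\tfrac{1}{2r}$, while you Taylor-expand around $\alpha=1$ to get the coefficient $\tfrac{1}{r}-O((1-\alpha)^2)$; both arrive at the same conclusion.
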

\begin{proof}
Denote $m$ the number of edges of $G$ and let $c=c(k,r)$ be sufficiently large. Take a vertex subset $A$ of size $\lfloor\alpha n\rfloor$ uniformly at random.  Now, for any edge $e\in E(G)$ we have
\begin{align*}
    \mathbb{P}(e\in E(G[A]) \cup E(G[A^c]))&= \frac{\binom{n-2}{\lfloor \alpha n\rfloor-2}}{\binom{n}{\lfloor \alpha n\rfloor}} + \frac{\binom{n-2}{\lfloor\alpha n\rfloor}}{\binom{n}{\lfloor \alpha n\rfloor}}\\
    &=\frac{\lfloor \alpha n\rfloor (\lfloor \alpha n\rfloor-1)}{n(n-1)}+ \frac{(n-\lfloor \alpha n \rfloor)(n-\lfloor \alpha n \rfloor-1)}{n(n-1)}\leq \alpha^2+(1-\alpha)^2+o(1).
\end{align*}
Thus there exists a vertex subset $A$ of size exactly $\lfloor\alpha n\rfloor$ such that 
\begin{align*}
    e(A)+e(A^c)\leq m(\alpha^2+(1-\alpha^2)+o(1)),
\end{align*}
implying that
\begin{align*}
    m\geq \frac{e(A)+e(A^c)}{\alpha^2+(1-\alpha^2)}+o(n^2)\geq  \frac{r-1}{2r}\frac{2\alpha-1}{\alpha^2+(1-\alpha)^2}n^2+o(n^2).
\end{align*}
By Lemma~\ref{lem:KeesudindesetKr} there exists an independent set of size at least
\begin{align*}
     \left(\frac{(r-1)^2}{r} \frac{2\alpha-1}{\alpha^2+(1-\alpha)^2}-(r-2)\right)n+o(n).
\end{align*}
Therefore, it suffices to show that
\begin{align}
\label{inequalityalphar}
        \frac{(r-1)^2}{r} \frac{2\alpha-1}{\alpha^2+(1-\alpha)^2}-(r-2)\geq k(1-\alpha).
\end{align}
Since we chose $c$ sufficiently large, we have for every $\alpha\geq c$
\begin{align*}
    \frac{2\alpha-1}{\alpha^2+(1-\alpha)^2} \geq \frac{(r-2)r+\frac{1}{2}}{(r-1)^2},
\end{align*}
because $f(\alpha):= \frac{2\alpha-1}{\alpha^2+(1-\alpha)^2}$ is a non-decreasing continuous function for $0\leq\alpha\leq1$ and $f(1)=1$. To see this, note that $f'(\alpha)=\frac{4\alpha(1-\alpha)}{(2\alpha^2-2\alpha+1)^2}\geq 0$. We conclude
\begin{align*}
    \frac{(r-1)^2}{r} \frac{2\alpha-1}{\alpha^2+(1-\alpha)^2}-(r-2)\geq \frac{(r-2)r+\frac{1}{2}}{r}-(r-2)=\frac{1}{2r}\geq k(1-\alpha),
\end{align*}
proving \eqref{inequalityalphar}.
\end{proof}

First we prove Theorem~\ref{KeevashSudakovextension} asymptotically and then we will argue that this already implies the exact version.
\begin{prop}
\label{KeevashSudakovextensionprop}
Let $r\geq 1$. There exists $c_r<1$ such that the following holds for every $\alpha$ such that $c_r\leq \alpha\leq 1$. Let $G$ be a $K_{r+1}$-free graph on $n$ vertices, then $G$ contains a set of $\left\lfloor \alpha n\right\rfloor$ vertices $A$ such that 
\begin{align*}
    e(A)+e(A^c)\leq  \frac{r-1}{2r}(2\alpha-1)n^2+o(n^2).
\end{align*}
\end{prop}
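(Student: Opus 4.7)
The plan is a proof by contradiction, combining Corollary~\ref{corlindepsetKe} with the structural Lemma~\ref{lem: edgesinpartitiongraph} and an averaging (random partition) argument. Suppose, for contradiction, that the conclusion fails: for some fixed $\epsilon > 0$, every $\lfloor \alpha n\rfloor$-subset $A \subseteq V(G)$ satisfies
\[
e(A) + e(A^c) > \tfrac{r-1}{2r}(2\alpha-1)n^2 + \epsilon n^2.
\]
By Corollary~\ref{corlindepsetKe} with a parameter $k$ chosen large depending on $r$ and $\epsilon$, there is an independent set $X \subseteq V(G)$ of size $xn \geq k(1-\alpha)n + o(n)$. The constant $c_r$ in the proposition is then defined as the threshold $c(k,r)$ from the corollary.

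Partition $V(G) = X \cup Y \cup Z$, where $Y$ is a maximum vertex-disjoint collection of copies of $K_r$ in $G[V \setminus X]$ and $Z$ is the residual set, which is $K_r$-free by maximality. Writing $|Y| = yn$ and $|Z| = zn$, Lemma~\ref{lem: edgesinpartitiongraph} combined with Tur\'an's theorem applied to $Z$ (which bounds $e(Z) \leq \tfrac{r-2}{2(r-1)}|Z|^2$) causes the final two terms of Lemma~\ref{lem: edgesinpartitiongraph} to cancel and yields
\[
e(G) \leq \tfrac{r-1}{2r}n^2 - \tfrac{((r-1)x - z)^2}{2r(r-1)}n^2.
\]

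Now pick a uniformly random $\lfloor \alpha n\rfloor$-subset $A$. Each edge of $G$ lies in $G[A] \cup G[A^c]$ with probability $\alpha^2 + (1-\alpha)^2 + o(1)$, so some $A$ achieves $e(A)+e(A^c) \leq (\alpha^2+(1-\alpha)^2)\,e(G) + o(n^2)$. Comparing this with the contradictory lower bound and the upper bound on $e(G)$ above, a short algebraic manipulation (using $\alpha^2+(1-\alpha)^2 - (2\alpha-1) = 2(1-\alpha)^2$) reduces the contradiction to showing
\[
\bigl((r-1)x - z\bigr)^2 \;\geq\; \tfrac{2(r-1)^2(1-\alpha)^2}{\alpha^2 + (1-\alpha)^2} - O(\epsilon).
\]
Since Corollary~\ref{corlindepsetKe} guarantees $x$ grows like $k(1-\alpha)$, one verifies this inequality by choosing $k$ large and $\alpha$ close enough to $1$ (this is what dictates the choice of $c_r$), unless $G$ is structurally close to the Tur\'an graph $T(n, r)$ with $X$ serving as one color class, so that $(r-1)x \approx z$ nullifies the savings.

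The main obstacle is this stability regime: when $G$ is near-Tur\'an, the random cut fails to beat the threshold. The remedy is to replace the random partition by a deterministic one in which $A^c$ is a $\lfloor(1-\alpha)n\rfloor$-subset of $X$. This forces $e(A^c) = 0$ automatically, and it remains to show $e(G[A]) \leq \tfrac{r-1}{2r}(2\alpha-1)n^2 + o(n^2)$; this follows from a second application of Lemma~\ref{lem: edgesinpartitiongraph} to $G[A]$, which still contains the large independent set $X \setminus A^c$ together with $Y$ and $Z$ inside $A$, mirroring the exact calculation valid in the extremal example $G = T(n,r)$ where one takes $A^c$ to be one Tur\'an color class.
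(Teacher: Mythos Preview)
Your approach has a genuine gap in the ``stability'' branch. Set $w := (r-1)x - z$. Your deterministic cut with $A^c \subset X$ together with Lemma~\ref{lem: edgesinpartitiongraph} applied to $G[A]$ (and Tur\'an on $Z$) gives
\[
\frac{e(A)}{n^2} \;\le\; \frac{r-1}{2r}\alpha^2 \;-\; \frac{\bigl(w-(r-1)(1-\alpha)\bigr)^2}{2r(r-1)},
\]
which is $\le \tfrac{r-1}{2r}(2\alpha-1)$ exactly when $|w-(r-1)(1-\alpha)| \ge (r-1)(1-\alpha)$, i.e.\ when $w\le 0$ or $w\ge 2(r-1)(1-\alpha)$. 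On the other hand, failure of the random-cut branch only forces $|w| \lesssim \sqrt{2}\,(r-1)(1-\alpha)$. The interval $w\in\bigl(0,\sqrt{2}\,(r-1)(1-\alpha)\bigr)$ is therefore not covered by either branch, and enlarging $k$ does not help: $z$ can grow with $(r-1)x$ so as to keep $w$ small and positive. Your appeal to $T(n,r)$ is misleading here --- in the Tur\'an graph one has $w=0$ on the nose, but the condition $(r-1)x\approx z$ alone does not make $G$ structurally close to $T(n,r)$.

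The paper closes this gap with ingredients you are missing. Your deterministic cut is precisely the paper's first application ($H_1$) of Lemma~\ref{lem: edgesinpartitiongraph}, and the paper uses it only to extract the bound $s<(r-1)k_r(1-\alpha)$, not to conclude. Two further cuts are then needed: a second cut $H_2$ (removing some $K_r$'s from $Y$ and some vertices from $Z$) pins down a \emph{lower} bound $s>(r-1)(k_r-\sqrt{2})(1-\alpha)$, and a third cut $H_3$ exploits that $Z$ is $K_r$-free via \emph{induction on $r$}: the inductive hypothesis supplies a subset $Z_3\subset Z$ with $e(Z_3)+e(Z\setminus Z_3)$ controlled, and placing $Z\setminus Z_3$ into $A^c$ finally forces $s>(r-1)k_r(1-\alpha)$, the desired contradiction. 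The induction (with the base case $r=2$ handled separately by Theorem~\ref{unbaltheo}) is the essential missing idea; without it there is no way to exploit the $K_r$-freeness of $Z$ beyond the Tur\'an bound, and that bound is exactly what leaves the gap above.
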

\begin{proof}[Proof of Proposition~\ref{KeevashSudakovextensionprop}]
Throughout this proof we omit all floor signs and lower-order error terms for readability. Since all rounding errors change the number of edges only by $o(n^2)$, we can safely ignore them.

We use induction on $r$. For $r=1$ the statement is trivial. For $r=2$, Proposition~\ref{KeevashSudakovextensionprop} follows from Theorem~\ref{unbaltheo} with $c_2=0.74$. Now, assume $r\geq 3$. Choose a constant $k_r>\sqrt{2}$ such that 
\begin{align*}
    1-\frac{1}{(r-1)(k_r-\sqrt{2})}\geq c_{r-1}
\end{align*}
and choose $c_r=c_r(k_r,r)<1$ sufficiently large. Towards contradiction, we assume that there exists an $n$-vertex $K_{r+1}$-free graph $G$ such that for some $\alpha\in [c_r,1)$ and every set $A$ of $\alpha n$ vertices we have 
\begin{align}
\label{cutassumption}
    e(A)+e(A^c)> \frac{r-1}{2r}(2\alpha-1)n^2.
\end{align}

By Corollary~\ref{corlindepsetKe}, the graph $G$ contains an independent set $U$ of size $k_r(1-\alpha)n$. Let $T$ be the largest subset of $V(G)\setminus U$ which can be covered by vertex-disjoint cliques of size $r$. Let $S=V(G)\setminus(U\cup T)$ and set $t=|T|/n$ and $s =|S|/n$. The subgraph $G[S]$ is $K_r$-free and $k_r(1-\alpha) + t + s = 1$.

Let $X_1$ be a subset of $U$ of size $(k_r-1)(1-\alpha)n$, and let $Y_1=T,Z_1=S$. Denote by $H_1$ the subgraph $G[X_1 \cup Y_1 \cup Z_1]$. Applying Lemma~\ref{lem: edgesinpartitiongraph} to $H_1$, we get
\begin{align}
\label{ineq: partlemma1}
\nonumber
    \frac{e(H_1)}{n^2}&\leq \frac{r-1}{2r}((k_r-1)(1-\alpha)+t+s)^2-\frac{((r-1)(k_r-1)(1-\alpha)-s)^2}{2r(r-1)}+\frac{1}{n^2}e(S)-\frac{r-2}{2(r-1)}s^2\\
    &\leq \frac{r-1}{2r}\alpha^2-\frac{((r-1)(k_r-1)(1-\alpha)-s)^2}{2r(r-1)},
\end{align}
where in the last inequality we used $(k_r-1)(1-\alpha)+t+s=\alpha$ and applied Tur\'an's theorem to $G[S]$. Since $|V(H_1)|=\alpha n$ and $U\setminus X_1$ is an independent set, we have by assumption \eqref{cutassumption}
\begin{align}
\label{ineq:applyassumption1}
    e(H_1)=e(H_1)+e(U\setminus X_1)> \frac{r-1}{2r}(2\alpha-1)n^2.
\end{align}
Combining \eqref{ineq: partlemma1} with \eqref{ineq:applyassumption1}, we get
\begin{align*}
   \frac{((r-1)(k_r-1)(1-\alpha)-s)^2}{2r(r-1)}< \frac{r-1}{2r}(1-\alpha)^2,
\end{align*}
implying
\begin{align*}
    \lvert(r-1)(k_r-1)(1-\alpha)-s\rvert< (r-1)(1-\alpha).
\end{align*}
Thus, we have $s< (r-1)k_r(1-\alpha)$. Set 
\begin{align*}
 q =
\begin{cases}
\frac{(1-\alpha)-t}{r} & \quad \text{if } t < (1-\alpha) \\
0 & \quad \text{otherwise}.
\end{cases}
\end{align*}
Note that $s\geq rq$, because otherwise $U$ would be an independent set of size at least $\alpha n$. This is not possible, because an independent set  $I\subseteq U$ of size exactly $\alpha n$ satisfies
\begin{align*}
    e(I)+e(I^c)=e(I^c)\leq \frac{r-1}{2r}(1-\alpha)^2\leq \frac{r-1}{2r}(2\alpha-1)
\end{align*}
for $\alpha\geq c_r$, a contradiction.  

Let $X_2$ be a subset of $U$ of size $(k_r(1-\alpha)-q)n$, let $Z_2$ be a subset of $S$ of size $(s-(r-1)q)n$ and let $Y_2$ be a subset obtained by deleting $(\frac{1-\alpha}{r}-q)n$
disjoint copies of $K_r$ from the set $T$. Note that
\begin{align*}
    |X_2| + |Y_2| + |Z_2|&=(k_r(1-\alpha)-q)n+(1-\alpha-qr)n+(s-(r-1)q)n\\&=(k_r-1)(1-\alpha)+sn+tn=\alpha n.
\end{align*} Applying Lemma~\ref{lem: edgesinpartitiongraph} to $H_2:=G[X_2\cup Y_2 \cup Z_2]$, we get
\begin{align}
\label{ineq: partlemma2}
\nonumber
    \frac{e(H_2)}{n^2}&\leq \frac{r-1}{2r}\alpha^2-\frac{((r-1)(k_r(1-\alpha)-q)-(s-(r-1)q))^2}{2r(r-1)}\\
    &+\frac{1}{n^2}e(Z_2)-\frac{r-2}{2(r-1)}|Z_2|^2= \frac{r-1}{2r}\alpha^2-\frac{((r-1)k_r(1-\alpha)-s)^2}{2r(r-1)}.
\end{align}
On the other side,  
\begin{align*}
    \frac{r-1}{2r}(2\alpha-1) < \frac{e(H_2)+e(V\setminus(X_2\cup Y_2 \cup Z_2))}{n^2} \leq \frac{e(H_2)}{n^2}+ \frac{r-1}{2r}(1-\alpha)^2,
\end{align*}
we have $\frac{e(H_2)}{n^2}>\frac{r-1}{2r}(2\alpha-1-(1-\alpha)^2)$. Together with \eqref{ineq: partlemma2}, this gives
\begin{align*}
   \frac{((r-1)k_r(1-\alpha)-s)^2}{2r(r-1)}\leq \frac{r-1}{r}(1-\alpha)^2
\end{align*}
implying
\begin{align*}
    \lvert (r-1)k_r(1-\alpha)-s\rvert\leq \sqrt{2}(r-1)(1-\alpha).
\end{align*}
Therefore $s>(r-1)(1-\alpha)(k_r-\sqrt{2})$. Set $\alpha_1=\frac{s-(1-\alpha)}{s}$. Since
\begin{align*}
    \alpha_1=1-\frac{1-\alpha}{s}>1-\frac{1}{(r-1)(k_r-\sqrt{2})},
\end{align*}
the induction assumption holds, and $G[S]$ contains a subset $Z_3$ of size $(s-(1-\alpha))n$ such that
\begin{align}
\label{ineq:edgeZ3+comp}
    e(Z_3)+e(S\setminus Z_3)\leq \frac{r-2}{2(r-1)}(2\alpha_1-1)s^2=\frac{r-2}{2(r-1)}((s-(1-\alpha))^2-(1-\alpha)^2).
\end{align}
Now, let $X_3=U$ and $Y_3=T$. Again, by applying Lemma~\ref{lem: edgesinpartitiongraph} to $H_3=G[X_3\cup Y_3\cup Z_3]$, we get
\begin{align}
\label{ineq: partlemma3}
    \frac{e(H_3)}{n^2}&\leq \frac{r-1}{2r}\alpha^2-\frac{((r-1)k_r(1-\alpha)-(s-(1-\alpha)))^2}{2r(r-1)}+\frac{1}{n^2}e(Z_3)-\frac{r-2}{2(r-1)}(s-(1-\alpha))^2.
\end{align}
By combining \eqref{ineq: partlemma2} with  \eqref{ineq:edgeZ3+comp} we get
\begin{align}
\label{ineq:H3complement}
    \frac{e(H_3)+e(S\setminus Z_3)}{n^2} &\leq \frac{r-1}{2r}\alpha^2-\frac{(((r-1)k_r+1)(1-\alpha)-s)^2}{2r(r-1)}+\frac{r-2}{2(r-1)}(1-\alpha)^2.
\end{align}
On the other side, we have
\begin{align*}
    \frac{e(H_3)+e(S\setminus Z_3)}{n^2} > \frac{r-1}{2r}(2\alpha-1).
\end{align*}
Combining \eqref{ineq: partlemma3} with \eqref{ineq:H3complement} we obtain
\begin{align*}
\frac{(((r-1)k_r+1)(1-\alpha)-s)^2}{2r(r-1)} < \frac{r-1}{2r}(1-\alpha)^2+\frac{r-2}{2(r-1)}(1-\alpha)^2,
\end{align*}
implying
\begin{align*}
\frac{(((r-1)k_r+1)(1-\alpha)-s)^2}{2r(r-1)} < \frac{(1-\alpha)^2}{2r(r-1)}.
\end{align*}
Thus $\lvert ((r-1)k_r+1)(1-\alpha)-s \rvert < (1-\alpha)$ and therefore $s>(r-1)k_r(1-\alpha)$, a contradiction. 
\end{proof}

\begin{proof}[Proof of Theorem~\ref{KeevashSudakovextension}]
Assume, towards contradiction, that there exists a $K_{r+1}$-free graph $H$ with $k$ vertices such that for every vertex subset $A$ of size $\alpha k$ we have
\begin{align*}
    e(A)+e(A^c)\geq  \frac{r-1}{2r}(2\alpha-1) k^2  + 1.
\end{align*}
Let $n$ be an integer sufficiently large and divisible by $k$.
Let $G$ be the graph constructed from $H$ by replacing every vertex $i$ with an independent set $V_i$ of size $n/k$, and for every edge $\{i, j\}\in E(H)$ by a complete graph between the sets $V_i$ and $V_j$. Note that $G$ is a $K_{r+1}$-free $n$-vertex graph. Let $S\subseteq V(G)$ be a set of size $\lfloor \alpha n\rfloor$ realizing the minimum $e(B)+e(B^c)$ among all sets $B\subseteq V(G)$ of size of $\alpha n$. The following argument justifies that we can assume that $S$ either contains or is disjoint from the sets $V_i$'s for all but one index. Towards contradiction and without loss of generality, assume that 
\begin{align*}
    1 \leq|S\cap V_1|\leq |S\cap V_2| <\frac{n}{k}.
\end{align*}
Define
\begin{align*}
w_1&:=S\cap  \{v\in V_\ell :1\ell\in E(H) \text{ for some } \ell\in[k]\setminus\{1,2\} \}, \\
w_1^C&:=S^c\cap  \{v\in V_\ell :1\ell\in E(H) \text{ for some } \ell\in[k]\setminus\{1,2\} \}, \\
w_2&:=S\cap  \{v\in V_\ell :2\ell\in E(H) \text{ for some } \ell\in[k]\setminus\{1,2\} \}, \\
w_2^c&:=S^c\cap \{v\in V_\ell :2\ell\in E(H) \text{ for some } \ell\in[k]\setminus\{1,2\} \}. \\
\end{align*}
Let $i,j\in\{1,2\}$ with $i\neq j$ such that $|w_i|-|w_i^c|\geq |w_j|-|w_j^c|$. If $12\notin E(H)$, then moving vertices in $V_i$ to $S^c$ and vertices in $V_j$ to $S$ one by one does not increase $e(S)+e(S^c)$. If $12\in E(H)$, then moving $\min\{|S\cap V_i|,|S^c\cap V_j| \}$ vertices in $V_i$ to $S^c$ and the same amount of vertices in $V_j$ to $S$ does not increase the number of edges in $e(S)+e(S^c)$. This is because 
\begin{align*}
    |E(G[S\cap (V_1\cup V_2)])\cup E(G[S^c \cap (V_1\cup V_2)])|=|S\cap V_1||S\cap V_2|+|S^c\cap V_1||S^c\cap V_2|,
\end{align*}
which is minimized when $\big\vert|S\cap V_1|-|S\cap V_2|\big\vert$ is as large as possible. We conclude that $S$ either is disjoint from or contains the sets $V_i$'s.

Now, the set $S$ contains $ \alpha k $ sets $V_i$. Denote by $I\subset [k]$ the set of indices $i$ such that $V_i\subseteq S$. We have
\begin{align*}
    e_G(S)+e_G(S^c)=(e_H(I)+e_H(I^c))\frac{n^2}{k^2}\geq \left( \frac{r-1}{2r}(2\alpha-1) k^2 + 1 \right)\frac{n^2}{k^2}=\frac{r-1}{2r}(2\alpha-1) n^2 +\frac{n^2}{k^2},
\end{align*}
contradicting Proposition~\ref{KeevashSudakovextensionprop}.

\end{proof}

%Therefore S contains ⌊αk⌋ sets Vi, so it spans at least (⌊βk2⌋ + 1)(n/k)2 = β1n2 edges, for some β1 > β.

\section{Balanced Cuts with bounded class size}
\label{sec:maxkpart}
In this section we prove Theorems~\ref{theo:maxbip} and~\ref{theo:maxtrip}, our two results on balanced partitions with bounded class sizes. The following lemma is a well-known fact which can be proven by a standard probabilistic argument, we omit the proof.
\begin{lemma}
Let $k,n$ be integers where $k$ divides $n$ and $G$ be an $n$-vertex graph. Then there exists a balanced $k$-partition $A_1\cup\ldots\cup A_k$ such that $e(A_i)=e(G)/k^2+o(n^2)$ for every $i\in[k]$.
\label{lemma:maxrandomcut}
\end{lemma}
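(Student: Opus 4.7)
\medskip

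\noindent\textbf{Proof plan.} The natural approach is to sample a balanced $k$-partition uniformly at random and to show that, with positive probability, every class $A_i$ simultaneously satisfies $e(A_i) = e(G)/k^2 + o(n^2)$. Concretely, I would sample a uniformly random permutation $\pi$ of $V(G)$ and let $A_i$ consist of the vertices in positions $(i-1)n/k + 1, \dots, in/k$ under $\pi$; this produces a uniformly random balanced $k$-partition.

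First I would compute the expectation. For a fixed edge $uv \in E(G)$ and a fixed index $i \in [k]$,
\[
\mathbb{P}[u,v \in A_i] \;=\; \frac{\binom{n-2}{n/k-2}}{\binom{n}{n/k}} \;=\; \frac{(n/k)(n/k - 1)}{n(n-1)} \;=\; \frac{1}{k^2} + O\!\left(\frac{1}{n}\right).
\]
Summing over all edges gives $\mathbb{E}[e(A_i)] = e(G)/k^2 + O(n)$, since $e(G) \leq \binom{n}{2}$. This already handles the expected value up to the claimed error.

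The substantive step, and the one I would regard as the main (though standard) obstacle, is concentration. I would use the bounded-differences inequality for random permutations (McDiarmid--Azuma on the permutation exposure martingale). If two entries of $\pi$ are swapped, then at most two vertices move between parts, so $e(A_i)$ changes by at most $2\Delta(G) \leq 2n$. A permutation of $n$ elements can be generated by at most $n$ independent exposures, each of which affects $e(A_i)$ by $O(n)$; hence Azuma's inequality yields
\[
\mathbb{P}\!\left[\,\bigl|\,e(A_i) - \mathbb{E}[e(A_i)]\,\bigr| \geq t\,\right] \;\leq\; 2\exp\!\left(-\frac{t^2}{Cn^3}\right)
\]
for a constant $C$. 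Taking $t = n^{7/4}$, say, makes this probability $o(1/k)$ (for any $k \leq n$), so a union bound over $i \in [k]$ shows that with positive probability \emph{every} $A_i$ satisfies $e(A_i) = e(G)/k^2 + o(n^2)$. Any permutation achieving this event yields a partition of the desired kind, which completes the argument.
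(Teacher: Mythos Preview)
Your argument is correct and is precisely the ``standard probabilistic argument'' the paper alludes to; the paper itself omits the proof entirely. Your expectation computation and the concentration step via the transposition-Lipschitz martingale inequality for random permutations both go through as stated (the Lipschitz constant $O(\Delta(G))\le O(n)$ and the resulting $\exp(-t^2/Cn^3)$ tail are exactly right), and the union bound over $i\in[k]$ is harmless since in the paper's applications $k$ is a fixed constant.
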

%\begin{proof}
%Consider a random $k$-partition, that is every vertex is in a set $B_i$ with probability $1/k$ independently from each other. By Chernoff's inequality, $|B_i|=n/k+o(n)$ with high probability. By Chebyshev inequality $e(B_i)=e(G)/k^2+o(n^2)$ for $i\in[k]$ with high probability. Thus, after potentially moving a few vertices between the partition, there exists a balanced $k$-partition $A_1\cup\ldots\cup A_k$ such that $e(A_i)=e(G)/k^2+o(n^2)$ for every $i\in[k]$.
%\end{proof}
Further, we will make use of the following exact version of the asymptotic result by Erd\H{o}s, Faudree, Rousseau and Schelp~\cite{MR1273598} on Conjecture~\ref{unbalconj2} for $\alpha=3/4$.

\begin{lemma}
\label{ErFaRoSc}
Let $G$ be an $n$-vertex triangle-free graph, where $n$ is divisible by $4$. Then, there exists a set $A\subseteq V(G)$ of size $3n/4$ such that $e(A)\leq n^2/8$.
\end{lemma}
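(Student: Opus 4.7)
I propose a direct averaging argument. Fix a vertex $u \in V(G)$ of maximum degree $d = \Delta(G)$; by triangle-freeness, $N(u)$ is an independent set. If $d \geq 3n/4$, take any $A \subseteq N(u)$ of size $3n/4$; then $e(A) = 0 \leq n^2/8$.

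Assume $d < 3n/4$. Set $M := V(G) \setminus (N(u) \cup \{u\})$ and draw $S$ uniformly at random from the $(3n/4 - d)$-subsets of $M \cup \{u\}$, then take $A = N(u) \cup S$, so that $|A| = 3n/4$. Write $\lambda := (3n/4 - d)/(n - d)$; each vertex of $M \cup \{u\}$ belongs to $S$ with probability $\lambda$, and each distinct pair belongs to $S$ with probability at most $\lambda^2 \leq \lambda$ (since $\lambda \leq 1$). Since $e(N(u)) = 0$ by triangle-freeness, and $e(\{u\}, M) = 0$ because $u$'s neighbours all lie in $N(u)$, the edges of $G$ decompose as
\[
e(G) = d + e(N(u), M) + e(M).
\]
A routine first/second moment computation then gives
\[
\mathbb{E}[e(A)] \;\leq\; \lambda\bigl(d + e(N(u), M)\bigr) + \lambda \cdot e(M) \;=\; \lambda \cdot e(G),
\]
where the bound $\lambda^2 \leq \lambda$ is used to absorb the $e(M)$ term.

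Combining with the elementary max-degree bound $e(G) \leq nd/2$ and the AM--GM inequality $d(n-d) \leq n^2/4$, one verifies
\[
\mathbb{E}[e(A)] \;\leq\; \lambda \cdot \frac{nd}{2} \;=\; \frac{nd(3n/4 - d)}{2(n-d)} \;\leq\; \frac{n^2}{8}.
\]
Hence $\min_{A} e(A) \leq \mathbb{E}[e(A)] \leq n^2/8$, which furnishes the required set.

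The proof encounters no substantive obstacle: the conceptual step is anchoring $A$ at the independent set $N(u)$ of a maximum-degree vertex, which forces most edges of $G$ across the cut $A/A^c$ and reduces the computation to AM--GM. Equality is achieved exactly when $d = n/2$ and $e(G) = nd/2 = n^2/4$, that is, when $G \cong K_{n/2, n/2}$, consistent with the extremal structure.
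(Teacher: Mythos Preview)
Your proof is correct. It differs genuinely from the paper's approach: the paper does not prove the lemma directly but instead invokes the asymptotic result of Erd\H{o}s--Faudree--Rousseau--Schelp (Conjecture~\ref{unbalconj2} for $\alpha=3/4$, which they established for $\alpha\ge 0.647$) and then upgrades it to an exact statement via the blow-up argument from the end of Section~\ref{sec:clique-freegraphs}. Your argument is entirely self-contained: anchoring $A$ at $N(u)$ for a maximum-degree vertex, bounding $\mathbb{E}[e(A)]\le \lambda\,e(G)\le \lambda\, nd/2$, and reducing the target inequality to $d(n-d)\le n^2/4$. The algebra is exact (no $o(n^2)$ slack needed), which is precisely why the blow-up step becomes unnecessary. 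The trade-off is scope: the paper's route works for any $\alpha$ where the asymptotic local-density bound is known, whereas your argument with the crude replacement $\lambda^2\le\lambda$ is specific to $\alpha=3/4$---indeed, writing $x=d/n$, the final inequality becomes $2(x-\tfrac12)(x-(2\alpha-1))\ge 0$, which holds for all $x\in[0,\alpha)$ only when the two roots coincide, i.e.\ $\alpha=3/4$.
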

This lemma follows simply from its asymptotic version by the standard argument presented at the end of Section~\ref{sec:clique-freegraphs}, we omit the details. 
\begin{proof}[Proof of Theorem~\ref{theo:maxbip}]
By Lemma~\ref{lemma:maxrandomcut} there exists a balanced $2$-partition $V(G)=A_1\cup A_2$ such that  $e(A_i)=e(G_n)/4+o(n^2)$ for $i=1,2$. Hence we can assume that $e(G)\ge (2/9-o(1))n^2$.

Since $G$ is triangle-free, for $n$ sufficiently large, there exists an independent set $I$ of size $ n/3$. Note that any set $B$ of size $n/2$ containing $I$ spans at most $n^2/18$ edges. We apply Lemma~\ref{ErFaRoSc} on the complement of $I$: There exists a set $C\subset I^c$ of size $n/2$, spanning at most $n^2/18$ edges.
The complement of $C$ also has this property, as it contains $I$. Thus, $\max\{e(C),e(C^c)\}\leq n^2/18$, completing the proof of Theorem~\ref{theo:maxbip}.
\end{proof}

\begin{proof}[Proof of Theorem~\ref{theo:maxtrip}]
By Lemma~\ref{lemma:maxrandomcut} there exists a balanced $3$-partition $V(G)=A_1\cup A_2\cup A_3$ such that  $e(A_i)=e(G_n)/9+o(n^2)$ for $i=1,2,3$. Hence, we can assume that $e(G_n)\ge (3/16)n^2+o(n^2)$.

First, consider the case $\alpha(G)\geq n/2$. Let $I$ be an independent set of size $ n/2$. We apply the result of Erd\H{o}s, Faudree, Rousseau and Schelp~\cite{MR1273598} on Conjecture~\ref{unbalconj2} on $G[I^c]$ for $\alpha=2/3$: There exists a set $C\subset I^c$ of size $n/3$, spanning at most $n^2/48+o(n^2)$ edges. Take an arbitrary balanced 2-partition $A\cup B$ of $V\setminus C$ such that $|A\cap I|-|B\cap I|\leq1$. Since $A$ contains an independent set of size $n/4+o(n)$, we have 
\begin{align*}
    e(A)\leq \frac{n}{4}\cdot \frac{n}{12}+o(n^2)=\frac{n^2}{48}+o(n^2) \quad \text{and similarly} \quad e(B)\leq \frac{n^2}{48}+o(n^2).
\end{align*}
Thus, $V(G)=A\cup B\cup C$ is a balanced $3$-partition with $\max\{e(A),e(B),e(C)\}\leq n^2/48+o(n^2)$. 

Now, assume that $\alpha(G)<n/2$. By a standard application of the Cauchy-Schwartz inequality, there exists an edge $xy$ satisfying
\begin{align*}
    \deg(x)+\deg(y)\geq \frac{1}{e(G)}\sum_{uv\in E(G)}\deg(u)+\deg(v)= \frac{1}{e(G)}\sum_{v\in V(G)}\deg(v)^2 \geq \frac{4e(G)}{n}\geq \frac{3n}{4}-o(n).
\end{align*} 
Since $G$ is triangle-free, $N(x)$ and $N(y)$ are disjoint independent sets. By $\alpha(G)<n/2$, we get $n/4-o(n)<|N(x)|,|N(y)|< \frac{n}{2}$. In particular, there exists two disjoint independent sets $I_1,I_2$ each of size $n/4+o(n)$. By the result of Erd\H{o}s, Faudree, Rousseau and Schelp~\cite{MR1273598} on Conjecture~\ref{unbalconj2} applied on $G[V\setminus(I_1\cup I_2)]$ for $\alpha=2/3$: There exists a set $C\subset V\setminus(I_1\cup I_2)$ of size $n/3$, spanning at most $n^2/48+o(n^2)$ edges. Take a balanced 3-partition $V(G)=A\cup B\cup C$ such that $I_1\subseteq A, I_2\subseteq B$. This 3-partition has the property  $\max\{e(A),e(B),e(C)\}\leq n^2/48+o(n^2)$. 
\end{proof}

\bibliographystyle{abbrvurl}
\bibliography{bibilo}

\begin{comment}
We raise the following conjecture which, if true, implies both of Erd\H{o}s' conjectures.
\begin{conj}
Let $G$ be an $n$-vertex triangle-free graph. There exists a $2$-partition $V(G)=A_1\cup A_2$ such that $\max\{e(A_1),e(A_2)\}\leq \lceil n^2/50 \rceil$.
\end{conj}

\begin{ques}
What is the minimum number $m=m(n)$ such that every triangle-free graph on $n$ vertices has a $3$-partition of its vertex set with the number of edges in each class at most $m$.
\end{ques}

\begin{ques}
Let $G$ be a $K_4$-free graph on $n$ vertices. What is the minimum number $m=m(n)$ such that there exists a $3$-partition with at most $m(n)$ class-edges?
\end{ques}
\FC{Maybe the answer is $n^2/36$ coming from the balanced blow-up of $C_5\otimes K_1$.}
\end{comment}
\end{document}